
\documentclass[final,3p,times]{elsarticle}

\usepackage{graphicx}
\usepackage{epsfig}
\usepackage{amssymb}
\usepackage{amsmath}
\usepackage{amsthm}
\usepackage{multirow}
\usepackage{algorithm}
\usepackage{algorithmic}
\usepackage{array}

\journal{Applied and Computational Harmonic Analysis}

\theoremstyle{plain}
\newtheorem{theorem}{Theorem}
\newtheorem{lemma}[theorem]{Lemma}

\theoremstyle{definition}
\newtheorem{definition}[theorem]{Definition}

\theoremstyle{remark}

\newtheorem{example}[theorem]{Example}

\begin{document}

\begin{frontmatter}
\title{Two are better than one: Fundamental parameters of frame coherence}
\author[Rutgers]{Waheed U. Bajwa}
\author[Duke,Princeton]{Robert Calderbank}
\author[Princeton]{Dustin G. Mixon}

\address[Rutgers]{Department of Electrical and Computer Engineering, Rutgers, The State University of New Jersey, Piscataway, New Jersey 08854, USA}
\address[Duke]{Department of Electrical and Computer Engineering, Duke University, Durham, North Carolina 27708, USA}
\address[Princeton]{Program in Applied and Computational Mathematics, Princeton University, Princeton, New Jersey 08544, USA}

\begin{abstract}
This paper investigates two parameters that measure the coherence of a frame: worst-case and average coherence.
We first use worst-case and average coherence to derive near-optimal probabilistic guarantees on both sparse signal detection and reconstruction in the presence of noise.
Next, we provide a catalog of nearly tight frames with small worst-case and average coherence.
Later, we find a new lower bound on worst-case coherence; we compare it to the Welch bound and use it to interpret recently reported signal reconstruction results.
Finally, we give an algorithm that transforms frames in a way that decreases average coherence without changing the spectral norm or worst-case coherence.
\end{abstract}

\begin{keyword}
frames \sep worst-case coherence \sep average coherence \sep Welch bound \sep sparse signal processing
\end{keyword}
\end{frontmatter}

\section{Introduction}

Many classical applications, such as radar and error-correcting codes, make use of over-complete spanning systems \cite{strohmer:acha03}.
Oftentimes, we may view an over-complete spanning system as a \emph{frame}.
Take $F=\{f_i\}_{i\in\mathcal{I}}$ to be a collection of vectors in some separable Hilbert space $\mathcal{H}$.
Then $F$ is a frame if there exist \emph{frame bounds} $A$ and $B$ with $0<A\leq B<\infty$ such that
$A\|x\|^2\leq\sum_{i\in\mathcal{I}}|\langle x,f_i\rangle|^2\leq B\|x\|^2$ for every $x\in\mathcal{H}$.
When $A=B$, $F$ is called a \emph{tight frame}.
For finite-dimensional unit norm frames, where $\mathcal{I}=\{1,\ldots,N\}$, the \emph{worst-case coherence} is a useful parameter:
\begin{equation}
\label{eq.mu defn}
\mu_F:=\max_{\substack{i,j\in\{1,\ldots,N\}\\i\neq j}}|\langle f_i,f_j\rangle|.
\end{equation}
Note that orthonormal bases are tight frames with $A=B=1$ and have zero worst-case coherence.
In both ways, frames form a natural generalization of orthonormal bases.

In this paper, we only consider finite-dimensional frames.
Those not familiar with frame theory can simply view a finite-dimensional frame as an $M\times N$ matrix of rank $M$ whose columns are the frame elements.
With this view, the tightness condition is equivalent to having the spectral norm be as small as possible; for an $M\times N$ unit norm frame $F$, this equivalently means $\|F\|_2^2=\frac{N}{M}$.

Throughout the literature, applications require finite-dimensional frames that are nearly tight and have small worst-case coherence \cite{candes:annstat09,donoho:tit06b,HP03,mixon:icassp11,strohmer:acha03,tropp:tit04,tropp:acha08,zahedi:acc10}.
Among these, a foremost application is sparse signal processing, where frames of small spectral norm and/or small worst-case coherence are commonly used to analyze sparse signals \cite{candes:annstat09,donoho:tit06b,tropp:tit04,tropp:acha08,zahedi:acc10}.
In general, sparse signal processing deals with measurements of the form
\begin{equation*}
y=Fx+e,
\end{equation*}
where $F$ is $M\times N$ with $M\ll N$, $x$ has at most $K$ nonzero entries, and $e$ is some sort of noise.
When given measurements $y$ of $x$, one might be asked to reconstruct the original sparse vector $x$, or to find the locations of its nonzero entries, or to simply determine whether $x$ is nonzero---each of these is a sparse signal processing problem.
In some applications, the signal $x$ is sparse in the identity basis, in which case $F$ represents the measurement process. 
In other applications, $x$ is sparse in an orthonormal basis or an overcomplete dictionary $G$ \cite{candes:acha10}. In this case, $F$ is a composition of $A$, the frame resulting from the measurement process, and $G$, the sparsifying dictionary, i.e., $F = AG$. 
We do not make a distinction between the two formulations in this paper, but our results are most readily interpretable in a physical setting for the former case.

Recently, \cite{bajwa:jcn10} introduced another notion of frame coherence called \emph{average coherence}:
\begin{equation}
\label{eq.nu defn}
\nu_F:=\tfrac{1}{N-1}\max_{i\in\{1,\ldots,N\}}\bigg|\sum_{\substack{j=1\\j\neq i}}^N\langle f_i,f_j\rangle\bigg|.
\end{equation}
Note that, in addition to having zero worst-case coherence, orthonormal bases also have zero average coherence.
Intuitively, worst-case coherence is a measure of dissimilarity between frame elements, whereas average coherence measures how well the frame elements are distributed in the unit hypersphere.
In sparse signal processing, there are a number of performance guarantees that depend only on worst-case coherence \cite{donoho:pnas03,elad:tit02,gribonval:tech02,tropp:tit04}.
These guarantees at best allow for sparsity levels on the order of $\sqrt{M}$.
Compressed sensing has brought guarantees that depend on the Restricted Isometry Property, which is much more difficult to check, but the guarantees allow for sparsity levels on the order of $\smash{\frac{M}{\log N}}$ \cite{baraniuk:ca08,candes:tit05,candes:tit06b}.
Recently, \cite{bajwa:jcn10} used worst-case and average coherence to produce \emph{probabilistic} guarantees that also allow for sparsity levels on the order of $\smash{\frac{M}{\log N}}$; these guarantees require that worst-case and average coherence together satisfy the following property:

\begin{definition}
We say an $M\times N$ unit norm frame $F$ satisfies the \emph{Strong Coherence Property} if
\begin{equation*}
\mbox{(SCP-1)}~~~\mu_F\leq\tfrac{1}{164\log N}\qquad\mbox{and}\qquad\mbox{(SCP-2)}~~~\nu_F\leq\tfrac{\mu_F}{\sqrt{M}},
\end{equation*}
where $\mu_F$ and $\nu_F$ are given by \eqref{eq.mu defn} and \eqref{eq.nu defn}, respectively.
\end{definition}

The reader should know that the constant $164$ is not particularly essential to the above definition; it is used in \cite{bajwa:jcn10} to simplify some analysis and make certain performance guarantees explicit, but the constant is by no means optimal.
This in mind, the requirement (SCP-1) can be interpreted more generally as $\mu_F=O(\tfrac{1}{\log N})$.
In the next section, we will use the Strong Coherence Property to continue the work of \cite{bajwa:jcn10}.
Where \cite{bajwa:jcn10} provided guarantees for noiseless reconstruction, we will produce near-optimal guarantees for signal detection and reconstruction from \emph{noisy} measurements of sparse signals.
These guarantees are related to those in \cite{candes:annstat09,donoho:tit06b,tropp:cras08,tropp:acha08}, and we will also elaborate on this relationship.

The results given in \cite{bajwa:jcn10} and Section~2, as well as the applications discussed in \cite{candes:annstat09,donoho:tit06b,HP03,mixon:icassp11,strohmer:acha03,tropp:tit04,tropp:acha08,zahedi:acc10}
demonstrate a pressing need for nearly tight frames with small worst-case and average coherence, especially in the area of sparse signal processing.
This paper offers three additional contributions in this regard.
In Section~3, we provide a sizable catalog of frames that exhibit small spectral norm, worst-case coherence, and average coherence.
With all three frame parameters provably small, these frames are guaranteed to perform well in relevant applications.
Next, performance in many applications is dictated by worst-case coherence \cite{candes:annstat09,donoho:tit06b,HP03,mixon:icassp11,strohmer:acha03,tropp:tit04,tropp:acha08,zahedi:acc10}.
It is therefore particularly important to understand which worst-case coherence values are achievable.
To this end, the Welch bound \cite{strohmer:acha03} is commonly used in the literature.
However, the Welch bound is only tight when the number of frame elements $N$ is less than the square of the spatial dimension $M$ \cite{strohmer:acha03}.
Another lower bound, given in \cite{MSEA03,XZG05}, beats the Welch bound when there are more frame elements, but it is known to be loose for real frames \cite{CHS96}.
Given this context, Section~4 gives a new lower bound on the worst-case coherence of real frames.
Our bound beats both the Welch bound and the bound in \cite{MSEA03,XZG05} when the number of frame elements far exceeds the spatial dimension.
Finally, since average coherence is so new, there is currently no intuition as to when (SCP-2) is satisfied.
In Section~5, we use ideas akin to the switching equivalence of graphs to transform a frame that satisfies (SCP-1) into another frame with the same spectral norm and worst-case coherence that additionally satisfies (SCP-2).

Throughout the paper, we make use of certain notations that we address here.
Recall, with big-O notation, that $f(n)=O(g(n))$ if there exists positive $C$ and $n_0$ such that for all $n>n_0$, $f(n)\leq Cg(n)$.
Also, $f(n)=\Omega(g(n))$ if $g(n)=O(f(n))$, and $f(n)=\Theta(g(n))$ if $f(n)=O(g(n))$ and $g(n)=O(f(n))$.
Additionally, we use $F_\mathcal{K}$ to denote the matrix whose columns are taken from the matrix $F$ according to the index set $\mathcal{K}$.
Similarly, we use $x_\mathcal{K}$ to denote the column vector whose entries are taken from the column vector $x$ according to the index set $\mathcal{K}$.
The column vector of the $T$ largest entries in column vector $x$ is denoted by $x_T$.
We also use $\|x\|$ to denote the $\ell^2$ norm of a vector $x$, while $\|F\|_2$ is the spectral norm of a matrix $F$. 
Lastly, we use a star ($*$) to denote the matrix adjoint, a dagger ($\dagger$) to denote the matrix pseudoinverse, 
and $\mathrm{I}_K$ to denote the $K\times K$ identity matrix.

\section{Worst-case and average coherence: Applications to sparse signal processing}

Frames with small spectral norm, worst-case coherence, and/or average coherence have found use in recent years with applications involving sparse signals.
Donoho et al.~used the worst-case coherence in \cite{donoho:tit06b} to provide uniform bounds on the signal and support recovery performance of combinatorial and convex optimization methods and greedy algorithms.
Later, Tropp \cite{tropp:acha08} and Cand\`{e}s and Plan \cite{candes:annstat09} used both the spectral norm and worst-case coherence to provide tighter bounds on the signal and support recovery performance of convex optimization methods for most support sets under the additional assumption that the sparse signals have independent nonzero entries with zero median.
Recently, Bajwa et al.~\cite{bajwa:jcn10} made use of the spectral norm and both coherence parameters to report tighter bounds on the noisy model selection and noiseless signal recovery performance of an incredibly fast greedy algorithm called \emph{one-step thresholding (OST)} for most support sets and \emph{arbitrary} nonzero entries.
In this section, we discuss further implications of the spectral norm and worst-case and average coherence of frames in applications involving sparse signals.

\subsection{The Weak Restricted Isometry Property}
A common task in signal processing applications is to test whether a collection of measurements corresponds to mere noise \cite{kay:98b}.
For applications involving sparse signals, one can test measurements $y \in \mathbb{C}^M$ against the null hypothsis $H_0: y = e$ and alternative hypothesis $H_1: y = Fx+e$, where the entries of the noise vector $e\in \mathbb{C}^M$ are independent, identical zero-mean complex-Gaussian random variables and the signal $x\in\mathbb{C}^N$ is $K$-sparse.
The performance of such signal detection problems is directly proportional to the energy in $Fx$ \cite{davenport:jstsp10,haupt:icassp07,kay:98b}.
In particular, existing literature on the detection of sparse signals \cite{davenport:jstsp10,haupt:icassp07} leverages the fact that $\|Fx\|^2 \approx \|x\|^2$ when $F$ satisfies the Restricted Isometry Property (RIP) of order $K$.
In contrast, we now show that the Strong Coherence Property also guarantees $\|Fx\|^2 \approx \|x\|^2$ for most $K$-sparse vectors.
We start with a definition:

\begin{definition}
\label{def:WRIP}
We say an $M\times N$ frame $F$ satisfies the \emph{$(K,\delta,p)$-Weak Restricted Isometry Property (Weak RIP)} if for every $K$-sparse vector $y \in \mathbb{C}^N$, a random permutation $x$ of $y$'s entries satisfies
\begin{equation}
\label{thmeqn:REP}
(1-\delta)\|x\|^2 \leq \|Fx\|^2 \leq (1+\delta)\|x\|^2
\end{equation}
with probability exceeding $1-p$.
\end{definition}

At first glance, it may seem odd that we introduce a random permutation when we might as well define Weak RIP in terms of a $K$-sparse vector whose support is drawn randomly from all $\smash{\binom{N}{K}}$ possible choices.
In fact, both versions would be equivalent in distribution, but we stress that in the present definition, the values of the nonzero entries of $x$ are \emph{not} random; rather, the only randomness we have is in the locations of the nonzero entries.
We wish to distinguish our results from those in \cite{candes:annstat09}, which explicitly require randomness in the values of the nonzero entries.
We also note the distinction between RIP and Weak RIP---Weak RIP requires that $F$ preserves the energy of \emph{most} sparse vectors.
Moreover, the manner in which we quantify ``most'' is important.
For each sparse vector, $F$ preserves the energy of most permutations of that vector, but for different sparse vectors, $F$ might not preserve the energy of permutations with the same support.
That is, unlike RIP, Weak RIP is \emph{not} a statement about the singular values of submatrices of $F$.
Certainly, matrices for which most submatrices are well-conditioned, such as those discussed in \cite{tropp:cras08,tropp:acha08}, will satisfy Weak RIP, but Weak RIP does not require this.
That said, the following theorem shows, in part, the significance of the Strong Coherence Property.

\begin{theorem}
\label{thm.WRIP}
Any $M\times N$ unit norm frame $F$ that satisfies the Strong Coherence Property
also satisfies the $(K,\delta,\frac{4K}{N^2})$-Weak Restricted Isometry Property
provided $N \geq 128$ and $\smash{2K\log{N} \leq \min\{\frac{\delta^2}{100\mu_F^2},M\}}$.
\end{theorem}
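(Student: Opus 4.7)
The plan is to express the Weak RIP inequality as a concentration statement for a quadratic form in the random permutation $x$. Setting $H := F^{*}F - I_{N}$, which has zero diagonal (since $F$ has unit-norm columns) and off-diagonal entries of modulus at most $\mu_{F}$, one has
\[
\|Fx\|^{2} - \|x\|^{2} \;=\; x^{*}Hx \;=\; \sum_{i \neq j \in \mathcal{K}} \overline{x_{i}}\, x_{j}\, \langle f_{j}, f_{i}\rangle,
\]
where $\mathcal{K}$ is the random support of $x$. The theorem then reduces to showing that $\Pr\bigl(|x^{*}Hx| > \delta\|x\|^{2}\bigr) \leq 4K/N^{2}$.

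I would first compute the mean. The random permutation $x$ of $y$'s entries decomposes into independent pieces: a uniformly random $K$-subset $\mathcal{K} \subset \{1,\ldots,N\}$ for the support of $x$, and a uniformly random bijection assigning the nonzero values of $y$ to the positions in $\mathcal{K}$. Using this product structure, $\mathrm{E}[x^{*}Hx]$ factors as a scalar depending only on $y$ times $\tfrac{K(K-1)}{N(N-1)}\sum_{i \neq j}\langle f_{j}, f_{i}\rangle$; bounding the latter row-by-row via (SCP-2) yields $|\mathrm{E}[x^{*}Hx]| \lesssim K\|y\|^{2}\mu_{F}/\sqrt{M}$, which the hypotheses force well below $\delta\|y\|^{2}$ (indeed, on the order of $\delta\|y\|^{2}/\log N$).

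The hard step is controlling the fluctuation of $x^{*}Hx$ about its mean. The randomness here is purely combinatorial, with no random signs or random magnitudes on $x$, so Hanson--Wright, Hoeffding, and Khintchine do not apply directly. My plan is to invoke a concentration inequality for quadratic forms over random subdictionaries, in the spirit of Tropp's non-commutative Khintchine or Rudelson-type bounds, possibly combined with a McDiarmid/Azuma step that handles the internal permutation conditionally on $\mathcal{K}$. The target is a sub-Gaussian tail of the form
\[
\Pr\bigl(|x^{*}Hx - \mathrm{E}[x^{*}Hx]| > t\bigr) \;\lesssim\; \exp\bigl(-c\,t^{2}/(K\mu_{F}^{2}\|y\|^{4})\bigr),
\]
into which one substitutes $t \approx \delta\|y\|^{2}$. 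The sparsity hypothesis $2K\log N \leq \delta^{2}/(100\mu_{F}^{2})$ is calibrated precisely so that the exponent is at most $-c'\log N$ for a sufficiently large constant, yielding the claimed $4K/N^{2}$ tail, while $2K\log N \leq M$ supplies the slack that makes the mean negligible.

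The main obstacle is producing this sharp concentration estimate with only combinatorial randomness in hand, and doing so under the quite stringent failure probability $4K/N^{2}$ without any additional structural assumption on the nonzero entries of $y$. The crux will be either a direct high-moment computation over the symmetric group (using the moment structure of uniformly random permutations to extract the $\mu_{F}^{2}K$ variance proxy) or a careful adaptation of an existing random-subdictionary concentration result, in each case translating (SCP-1) into operator/moment bounds of exactly the right size and (SCP-2) into the cancellation that keeps the expectation under control.
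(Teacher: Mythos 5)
Your reduction of Weak RIP to a tail bound for the quadratic form $x^{*}Hx$ with $H=F^{*}F-\mathrm{I}_N$ is a reasonable starting point, and your accounting of the roles of (SCP-1) and (SCP-2) --- the average coherence controlling the mean, the worst-case coherence and the hypothesis $2K\log N\leq\delta^{2}/(100\mu_F^{2})$ controlling the fluctuation --- is correctly calibrated. But the proposal has a genuine gap at exactly the point you flag as the crux: the concentration inequality for the quadratic form under purely combinatorial (permutation) randomness is never established. You correctly rule out Hanson--Wright, Hoeffding, and Khintchine, and then list several candidate substitutes (non-commutative Khintchine, Rudelson-type bounds, McDiarmid/Azuma on the internal permutation, high moments over the symmetric group) without committing to or executing any of them. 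A sub-Gaussian tail of the form $\exp(-ct^{2}/(K\mu_F^{2}\|y\|^{4}))$ for a quadratic form in a random permutation, at the stringent failure level $4K/N^{2}$ and with no randomness in the signs or magnitudes of the nonzero entries, is precisely the hard technical content of the theorem; asserting its existence is not a proof.

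The paper sidesteps this difficulty entirely, and the device is worth internalizing: rather than attack the quadratic form directly, it applies Cauchy--Schwarz together with $\|\cdot\|\leq\sqrt{K}\,\|\cdot\|_{\infty}$ on $\mathbb{C}^{K}$ to get
\[
\big|\|Fx\|^{2}-\|x\|^{2}\big|\;\leq\;\sqrt{K}\,\|x_{\mathcal K}\|\,\big\|(F_{\mathcal K}^{*}F_{\mathcal K}^{}-\mathrm{I}_K^{})x_{\mathcal K}^{}\big\|_{\infty},
\]
which converts the problem into $K$ \emph{linear} forms in the randomly placed entries, one per coordinate. Each of these is handled by Lemma~3 of \cite{bajwa:jcn10} (a martingale/Azuma-type bound for sampling without replacement, with $\nu_F$ centering the conditional means), which already carries the $4K$ union-bound factor and delivers $\|(F_{\mathcal K}^{*}F_{\mathcal K}^{}-\mathrm{I}_K^{})x_{\mathcal K}^{}\|_{\infty}\leq\epsilon\|x_{\mathcal K}\|$ with probability exceeding $1-4K/N^{2}$ for $\epsilon=10\mu_F\sqrt{2\log N}$. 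The $\sqrt{K}$ lost in passing to the $\ell_{\infty}$ norm is exactly absorbed by the hypothesis $2K\log N\leq\delta^{2}/(100\mu_F^{2})$, which gives $\sqrt{K}\epsilon\leq\delta$. If you want to complete your route instead, you would need to prove the quadratic-form concentration from scratch (e.g., by a moment computation over the symmetric group), which is substantially harder than the linear-form bound the paper borrows; absent that, the argument is incomplete.
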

\begin{proof}
Let $x$ be as in Definition~\ref{def:WRIP}.
Note that \eqref{thmeqn:REP} is equivalent to $\smash{\big|\|Fx\|^2-\|x\|^2\big|\leq\delta\|x\|^2}$.
Defining $\mathcal{K}:=\{n:|x_n|>0\}$, then the Cauchy-Schwarz inequality gives
\begin{equation}
\label{pfeqn:REP_1}
\big|\|Fx\|^2-\|x\|^2\big|
=|x_\mathcal{K}^*(F_\mathcal{K}^*F_\mathcal{K}^{}-\mathrm{I}_K^{})x_\mathcal{K}^{}|
\leq\|x_\mathcal{K}^{}\|~\|(F_\mathcal{K}^*F_\mathcal{K}^{}-\mathrm{I}_K^{})x_\mathcal{K}^{}\|
\leq\sqrt{K}~\|x_\mathcal{K}^{}\|~\|(F_\mathcal{K}^*F_\mathcal{K}^{}-\mathrm{I}_K^{})x_\mathcal{K}^{}\|_\infty,
\end{equation}
where the last inequality uses the fact that $\|\cdot\|\leq\sqrt{K}~\|\cdot\|_\infty$ in $\mathbb{C}^K$.
We now consider \cite[Lemma~3]{bajwa:jcn10}, which states that for any $\epsilon \in [0,1)$ and $a \geq 1$, $\|(F_\mathcal{K}^*F_\mathcal{K}^{}-\mathrm{I}_K^{})x_\mathcal{K}^{}\|_\infty \leq \epsilon \|x_\mathcal{K}^{}\|$ with probability exceeding $\smash{1-4K\mathrm{e}^{- (\epsilon-\sqrt{K}\nu_F)^2/16(2+a^{-1})^2\mu_F^2}}$ provided $\smash{K \leq \min\{\epsilon^2\nu_F^{-2}, (1+a)^{-1}N\}}$. 
We claim that \eqref{pfeqn:REP_1} together with \cite[Lemma~3]{bajwa:jcn10} guarantee $\smash{\big|\|Fx\|^2-\|x\|^2\big|\leq\delta\|x\|^2}$ with probability exceeding $\smash{1-\frac{4K}{N^2}}$.
In order to establish this claim, we fix $\epsilon=10\mu\sqrt{2\log{N}}$ and $a=2\log{128}-1$. 
It is then easy to see that (SCP-1) gives $\epsilon < 1$, and also that (SCP-2) and $2K\log{N} \leq M$ give $K \leq \epsilon^2\nu_F^{-2}/9$. 
Therefore, since the assumption that $N \geq 128$ together with $2K\log{N} \leq M$ implies $K \leq (1+a)^{-1}N$, we obtain $\smash{\mathrm{e}^{- (\epsilon - \sqrt{K}\nu_F)^2/16(2+a^{-1})^2\mu_F^2} \leq \frac{1}{N^2}}$. 
The result now follows from the observation that $\smash{2K\log{N} \leq \frac{\delta^2}{100\mu_F^2}}$ implies $\sqrt{K}\epsilon \leq \delta$.
\end{proof}

This theorem shows that having small worst-case and average coherence is enough to guarantee Weak RIP.
This contrasts with related results by Tropp \cite{tropp:cras08,tropp:acha08} that require $F$ to be nearly tight.
In fact, the proof of Theorem~\ref{thm.WRIP} does not even use the full power of the Strong Coherence Property; instead of (SCP-1), it suffices to have $\smash{\mu_F\leq1/(15\!\sqrt{\log N})}$, part of what \cite{bajwa:jcn10} calls the Coherence Property.
Also, if $F$ has worst-case coherence $\smash{\mu_F=O(1/\!\sqrt{M})}$ and average coherence $\nu_F=O(1/M)$, then even if $F$ has large spectral norm, Theorem~\ref{thm.WRIP} states that $F$ preserves the energy of most $K$-sparse vectors with $K=O(M/\log N)$, i.e., the sparsity regime which is linear in the number of measurements.

\subsection{Reconstruction of sparse signals from noisy measurements}
Another common task in signal processing applications is to reconstruct a
$K$-sparse signal $x\in\mathbb{C}^N$ from a small collection of linear
measurements $y\in\mathbb{C}^M$. Recently, Tropp \cite{tropp:acha08} used both
the worst-case coherence and spectral norm of frames to find bounds on the
reconstruction performance of \emph{basis pursuit (BP)} \cite{donoho:siamjsc98}
for most support sets under the assumption that the nonzero entries of $x$ are
independent with zero median. In contrast, \cite{bajwa:jcn10} used the spectral
norm and worst-case and average coherence of frames to find bounds on the
reconstruction performance of OST for most support sets and \emph{arbitrary}
nonzero entries. However, both \cite{bajwa:jcn10} and \cite{tropp:acha08} limit
themselves to recovering $x$ in the absence of noise, corresponding to $y =
Fx$, a rather ideal scenario.

Our goal in this section is to provide guarantees for the reconstruction of
sparse signals from noisy measurements $y=Fx+e$, where the entries of the noise
vector $e\in \mathbb{C}^M$ are independent, identical complex-Gaussian random
variables with mean zero and variance $\sigma^2$. In particular, and in
contrast with \cite{donoho:tit06b}, our guarantees will hold for arbitrary unit norm
frames $F$ without requiring the signal's sparsity level to satisfy $K=O(\mu_F^{-1})$. 
The reconstruction algorithm that we analyze here is the OST
algorithm of \cite{bajwa:jcn10}, which is described in
Algorithm~\ref{alg:OST_recon}. The following theorem extends the analysis of
\cite{bajwa:jcn10} and shows that the OST algorithm leads to near-optimal
reconstruction error for certain important classes of sparse signals.

Before proceeding further, we first define some notation. We use
$\textsf{\textsc{snr}}:=\|x\|^2/\mathbb{E}[\|e\|^2]$ to denote the
\emph{signal-to-noise ratio} associated with the signal reconstruction problem.
Also, we use $\smash{\mathcal{T}_\sigma(t):=\{n: |x_n| >
\frac{2\sqrt{2}}{1-t}\sqrt{2 \sigma^2 \log{N}}\}}$ for any $t \in (0,1)$ to
denote the locations of all the entries of $x$ that, roughly speaking, lie
above the \emph{noise floor} $\sigma$. Finally, we use
$\smash{\mathcal{T}_\mu(t):=\{n: |x_n| > \frac{20}{t}\mu_F\|x\|\sqrt{2
\log{N}}\}}$ to denote the locations of entries of $x$ that, roughly speaking,
lie above the \emph{self-interference floor} $\mu_F\|x\|$.
\begin{algorithm*}[t]
\caption{One-Step Thresholding (OST) for sparse signal reconstruction \cite{bajwa:jcn10}}
\label{alg:OST_recon}
\textbf{Input:} An $M \times N$ unit norm frame $F$, a vector $y=Fx+e$, and a threshold $\lambda > 0$\\
\textbf{Output:} An estimate $\hat{x} \in \mathbb{C}^N$ of the true sparse signal $x$
\begin{algorithmic}
\STATE $\hat{x} \leftarrow 0$ \hfill \COMMENT{Initialize}
\STATE $z \leftarrow F^* y$ \hfill \COMMENT{Form signal proxy}
\STATE $\hat{\mathcal{K}} \leftarrow \{n : |z_n| > \lambda\}$ \hfill \COMMENT{Select indices via OST}
\STATE $\hat{x}_{\hat{\mathcal{K}}} \leftarrow (F_{\hat{\mathcal{K}}})^\dagger y$ \hfill \COMMENT{Reconstruct signal via least-squares}
\end{algorithmic}
\end{algorithm*}

\begin{theorem}[Reconstruction of sparse signals]
\label{thm:RSP}
Take an $M\times N$ unit norm frame $F$ which satisfies the Strong Coherence Property, pick $t\in(0,1)$, and choose $\smash{\lambda = \sqrt{2\sigma^2\log{N}}~\max \{\frac{10}{t}\mu_F\sqrt{M~\textsf{\textsc{snr}}}, \frac{\sqrt{2}}{1-t}\}}$. 
Further, suppose $x \in \mathbb{C}^N$ has support $\mathcal{K}$ drawn uniformly at random from all possible $K$-subsets of $\{1,\ldots,N\}$.
Then provided
\begin{equation}
\label{thmeqn:RSP}
K \leq \tfrac{N}{c_1^2\|F\|_2^2\log{N}},
\end{equation}
Algorithm~\ref{alg:OST_recon} produces $\hat{\mathcal{K}}$ such that $\mathcal{T}_\sigma(t) \cap \mathcal{T}_\mu(t) \subseteq \hat{\mathcal{K}} \subseteq \mathcal{K}$ and $\hat{x}$ such that
\begin{equation}
\label{thmeqn:RSP_2}
\|x-\hat{x}\| \leq c_2 \sqrt{\sigma^2|\hat{\mathcal{K}}|\log{N}} + c_3\|x_{\mathcal{K} \setminus \hat{\mathcal{K}}}\|
\end{equation}
with probability exceeding $1 - 10N^{-1}$. Finally, defining $T:=|\mathcal{T}_\sigma(t) \cap \mathcal{T}_\mu(t)|$, we further have
\begin{equation}
\label{thmeqn:RSP_3}
\|x-\hat{x}\| \leq c_2 \sqrt{\sigma^2 K \log{N}} + c_3\|x - x_T\|
\end{equation}
in the same probability event.
Here, $c_1 = 37\mathrm{e}$, $c_2 = \frac{2}{1-\mathrm{e}^{-1/2}}$, and $c_3 = 1 + \frac{\mathrm{e}^{-1/2}}{1-\mathrm{e}^{-1/2}}$ are numerical constants.
\end{theorem}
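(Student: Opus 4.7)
The plan is to mirror the noiseless analysis of \cite{bajwa:jcn10} while carefully tracking an additive Gaussian noise term, so as to extract the near-optimal $\sqrt{\sigma^2 K\log N}$ error. The central object is the signal proxy $z=F^*y$, which for each $n$ decomposes as
\begin{equation*}
z_n \;=\; x_n\,\mathbf{1}_{\{n\in\mathcal{K}\}} \;+\; \sum_{m\in\mathcal{K},\,m\neq n}\langle f_n,f_m\rangle\,x_m \;+\; \langle f_n,e\rangle,
\end{equation*}
so $|z_n|$ is governed by the true signal value (on $\mathcal{K}$), a coherence-driven interference term, and a Gaussian noise term. I would then bound each piece with high probability: (i) for $n\in\mathcal{K}$, the self-interference $|[(F^*F-\mathrm{I}_K)x]_n|$ is controlled by \cite[Lemma~3]{bajwa:jcn10}, already invoked in the proof of Theorem~\ref{thm.WRIP}; (ii) for $n\notin\mathcal{K}$, the cross-interference $|\sum_{m\in\mathcal{K}}\langle f_n,f_m\rangle x_m|$ is controlled via a companion lemma from \cite{bajwa:jcn10} that exploits (SCP-2) to annihilate the conditional mean and then applies Hoeffding-type concentration to the random sub-sum; (iii) $\langle f_n,e\rangle$ is zero-mean complex Gaussian of variance $\sigma^2$, so a union bound gives $\max_n|\langle f_n,e\rangle|\lesssim\sqrt{\sigma^2\log N}$ with probability $1-O(1/N)$. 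The threshold $\lambda$ is calibrated so that its $\mu_F$-term dominates the interference bounds (i)--(ii) and its $\sigma$-term dominates (iii); comparing against $\lambda$ then gives both $\hat{\mathcal{K}}\subseteq\mathcal{K}$ and $\mathcal{T}_\sigma(t)\cap\mathcal{T}_\mu(t)\subseteq\hat{\mathcal{K}}$, because an index in the latter set has $|x_n|>2\lambda$ by construction of the two floors.

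Having pinned down the support, I would turn to the least-squares step. Because $\hat{\mathcal{K}}\subseteq\mathcal{K}$ one can write
\begin{equation*}
\hat{x}_{\hat{\mathcal{K}}}-x_{\hat{\mathcal{K}}}\;=\;F_{\hat{\mathcal{K}}}^{\dagger}F_{\mathcal{K}\setminus\hat{\mathcal{K}}}\,x_{\mathcal{K}\setminus\hat{\mathcal{K}}} \;+\; F_{\hat{\mathcal{K}}}^{\dagger}e,
\end{equation*}
and the near-isometry conclusions of \cite[Lemma~3]{bajwa:jcn10}, combined with \eqref{thmeqn:RSP} (which sits at the scale where random principal submatrices of $F^*F$ are well-conditioned), yield uniform control of $\|F_{\hat{\mathcal{K}}}^{\dagger}\|_2$ and $\|F_{\mathcal{K}\setminus\hat{\mathcal{K}}}\|_2$. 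A chi-squared tail bound on $\|F_{\hat{\mathcal{K}}}^{\dagger}e\|$ then produces the $c_2\sqrt{\sigma^2|\hat{\mathcal{K}}|\log N}$ contribution. Assembling the pieces via $\|x-\hat{x}\|^2=\|x_{\mathcal{K}\setminus\hat{\mathcal{K}}}\|^2+\|\hat{x}_{\hat{\mathcal{K}}}-x_{\hat{\mathcal{K}}}\|^2$ yields \eqref{thmeqn:RSP_2}. The passage to \eqref{thmeqn:RSP_3} is then essentially free: since $\mathcal{T}_\sigma(t)\cap\mathcal{T}_\mu(t)\subseteq\hat{\mathcal{K}}$, every missed index lies outside the top-$T$ entries, so $\|x_{\mathcal{K}\setminus\hat{\mathcal{K}}}\|\leq\|x-x_T\|$, while $|\hat{\mathcal{K}}|\leq K$.

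The main obstacle is step (ii). Unlike the self-interference bound, which only sees the random principal submatrix $F_{\mathcal{K}}^*F_{\mathcal{K}}$, the cross-interference involves \emph{off-diagonal} blocks of $F^*F$ and requires a uniform bound over all $n\notin\mathcal{K}$. This is precisely where (SCP-2) earns its keep: the deterministic row sums $\sum_{m\neq n}\langle f_n,f_m\rangle$ are forced to be small by the average-coherence hypothesis, so after subtracting the (essentially vanishing) conditional mean, a Hoeffding bound together with (SCP-1) gives the required $O(\mu_F\|x\|\sqrt{\log N})$ tail. Tracking the several probability-$O(1/N)$ events through a union bound, and verifying that the chosen $\lambda$ simultaneously satisfies all the slack conditions, is the bookkeeping burden; the sparsity constraint \eqref{thmeqn:RSP}, in which $\|F\|_2$ appears, is imposed precisely to make this union bound close at the claimed $1-10N^{-1}$ level.
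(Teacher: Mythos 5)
Your architecture matches the paper's. The paper outsources the support-recovery step to \cite[Theorem~5]{bajwa:jcn10} (whose proof is exactly the proxy decomposition $z_n = x_n\mathbf{1}_{\{n\in\mathcal{K}\}} + \text{interference} + \langle f_n,e\rangle$ that you describe, with (SCP-2) killing the conditional mean of the cross-interference), then performs the same least-squares analysis via $\hat{x}_{\hat{\mathcal{K}}} - x_{\hat{\mathcal{K}}} = (F_{\hat{\mathcal{K}}})^\dagger F_{\mathcal{K}\setminus\hat{\mathcal{K}}}x_{\mathcal{K}\setminus\hat{\mathcal{K}}} + (F_{\hat{\mathcal{K}}})^\dagger e$ together with conditioning of the random submatrix, and closes \eqref{thmeqn:RSP_3} exactly as you do. Two corrections on attribution: the submatrix conditioning does not come from \cite[Lemma~3]{bajwa:jcn10} (a coordinatewise $\ell_\infty$ statement) but from the Tropp-type spectral bound (\cite{tropp:cras08}, \cite[Proposition~3]{bajwa:jcn10}), which under \eqref{thmeqn:RSP} gives $\|F_\mathcal{K}^*F_\mathcal{K}-\mathrm{I}_K\|_2<\mathrm{e}^{-1/2}$ and hence controls $\|(F_{\hat{\mathcal{K}}}^*F_{\hat{\mathcal{K}}})^{-1}\|_2$ and $\|F_{\hat{\mathcal{K}}}^*F_{\mathcal{K}\setminus\hat{\mathcal{K}}}\|_2$ by monotonicity of the spectral norm under passing to submatrices.

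The one genuine gap is your handling of the noise term. A chi-squared tail bound on $\|F_{\hat{\mathcal{K}}}^\dagger e\|$ is not available as stated: $\hat{\mathcal{K}}$ is produced by thresholding $F^*(Fx+e)$ and therefore depends on $e$, so you may not treat $F_{\hat{\mathcal{K}}}^\dagger$ as a fixed matrix applied to an independent Gaussian, and a union bound over the up to $2^K$ possible realizations of $\hat{\mathcal{K}}$ is far too expensive at the $1-10N^{-1}$ level. The paper's workaround is to write $\|(F_{\hat{\mathcal{K}}})^\dagger e\| \leq \|(F_{\hat{\mathcal{K}}}^*F_{\hat{\mathcal{K}}})^{-1}\|_2\,\|F_{\hat{\mathcal{K}}}^*e\|$ and then $\|F_{\hat{\mathcal{K}}}^*e\| \leq |\hat{\mathcal{K}}|^{1/2}\|F_{\hat{\mathcal{K}}}^*e\|_\infty$, bounding $\max_n|\langle f_n,e\rangle| \leq 2\sqrt{\sigma^2\log N}$ uniformly over all $N$ columns (via \cite[Lemma~6]{bajwa:jcn10}); this is valid regardless of which support is realized and is precisely the source of the $\sqrt{\log N}$ factor in the $c_2\sqrt{\sigma^2|\hat{\mathcal{K}}|\log N}$ term. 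With that substitution your argument closes, and the probability accounting ($6N^{-1}$ from support recovery, $2N^{-1}$ from conditioning, and the noise supremum event) assembles into the claimed $1-10N^{-1}$.
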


\begin{proof}
To begin, note that since $\smash{\|F\|_2^2\geq\frac{N}{M}}$, we have from \eqref{thmeqn:RSP} that $K\leq M/(2\log{N})$. 
It is then easy to conclude from \cite[Theorem~5]{bajwa:jcn10} that $\smash{\hat{\mathcal{K}}}$ satisfies $\mathcal{T}_\sigma(t) \cap \mathcal{T}_\mu(t) \subseteq \hat{\mathcal{K}} \subseteq \mathcal{K}$ with probability exceeding $1 - 6N^{-1}$. Therefore, conditioned on the event $\smash{\mathcal{E}_1 := \{\mathcal{T}_\sigma(t) \cap \mathcal{T}_\mu(t) \subseteq \hat{\mathcal{K}} \subseteq \mathcal{K}\}}$, we can make use of the triangle inequality to write
\begin{equation}
\label{pfeqn:RSP_1}
\|x - \hat{x}\|\leq \|x_{\hat{\mathcal{K}}} - \hat{x}_{\hat{\mathcal{K}}}\| + \|x_{\mathcal{K}\setminus\hat{\mathcal{K}}}\|.
\end{equation}
Next, we may use \eqref{thmeqn:RSP} and the fact that $F$ satisfies the Strong Coherence Property to conclude from 
\cite{tropp:cras08} (see, e.g., \cite[Proposition~3]{bajwa:jcn10}) that 
$\|F_\mathcal{K}^*F_\mathcal{K}^{}-\mathrm{I}_K^{}\|_2 < \mathrm{e}^{-1/2}$ with probability exceeding $1 - 2N^{-1}$. 
Hence, conditioning on $\mathcal{E}_1$ and $\smash{\mathcal{E}_2 := \{\|F_\mathcal{K}^*F_\mathcal{K}^{}-\mathrm{I}_K^{}\|_2 < \mathrm{e}^{-1/2}\}}$, we have that $\smash{(F_{\hat{\mathcal{K}}})^\dagger = (F_{\hat{\mathcal{K}}}^* F_{\hat{\mathcal{K}}}^{})^{-1} F_{\hat{\mathcal{K}}}^*}$ since $F_{\hat{\mathcal{K}}}$ is a submatrix of a full column rank matrix $F_\mathcal{K}$.
Therefore, given $\mathcal{E}_1$ and $\mathcal{E}_2$, we may write
\begin{equation}
\label{pfeqn:RSP_1.5}
\hat{x}_{\hat{\mathcal{K}}} 
= (F_{\hat{\mathcal{K}}})^\dagger (Fx+e) 
=  x_{\hat{\mathcal{K}}} + (F_{\hat{\mathcal{K}}})^\dagger F_{\mathcal{K} \setminus \hat{\mathcal{K}}}x_{\mathcal{K} \setminus \hat{\mathcal{K}}} + (F_{\hat{\mathcal{K}}})^\dagger e,
\end{equation}
and so substituting \eqref{pfeqn:RSP_1.5} into \eqref{pfeqn:RSP_1} and applying the triangle inequality gives
\begin{align}
\nonumber
\|x - \hat{x}\| 
&\leq \|(F_{\hat{\mathcal{K}}})^\dagger F_{\mathcal{K} \setminus \hat{\mathcal{K}}}x_{\mathcal{K} \setminus \hat{\mathcal{K}}}\| + \|(F_{\hat{\mathcal{K}}})^\dagger e\| + \|x_{\mathcal{K} \setminus \hat{\mathcal{K}}}\|\\
\label{pfeqn:RSP_2}
&\leq \Big(1 + \|(F_{\hat{\mathcal{K}}}^* F_{\hat{\mathcal{K}}}^{})^{-1}\|_2 \|F_{\hat{\mathcal{K}}}^* F_{\mathcal{K} \setminus \hat{\mathcal{K}}}^{}\|_2\Big)\|x_{\mathcal{K} \setminus \hat{\mathcal{K}}}^{}\| + \|(F_{\hat{\mathcal{K}}}^* F_{\hat{\mathcal{K}}}^{})^{-1}\|_2 \|F_{\hat{\mathcal{K}}}^* e\|.
\end{align}
Since, given $\mathcal{E}_1$, we have that $\smash{F_{\hat{\mathcal{K}}}^* F_{\hat{\mathcal{K}}}^{} - \mathrm{I}_K^{}}$ and $\smash{F_{\hat{\mathcal{K}}}^* F_{\mathcal{K} \setminus \hat{\mathcal{K}}}^{}}$ are submatrices of $\smash{F_\mathcal{K}^* F_\mathcal{K}^{} - \mathrm{I}_K^{}}$, and since the spectral norm of a matrix provides an upper bound for the spectral norms of its submatrices, we have the following given $\mathcal{E}_1$ and $\mathcal{E}_2$:
$\smash{\|F_{\hat{\mathcal{K}}}^* F_{\mathcal{K} \setminus \hat{\mathcal{K}}}^{}\|_2 \leq \mathrm{e}^{-1/2}}$
and
$\smash{\|(F_{\hat{\mathcal{K}}}^* F_{\hat{\mathcal{K}}}^{})^{-1}\|_2 \leq \tfrac{1}{1-\mathrm{e}^{-1/2}}}$.
We can now substitute these bounds into \eqref{pfeqn:RSP_2} and make use of the fact that $\smash{\|F_{\hat{\mathcal{K}}}^* e\| \leq |\hat{\mathcal{K}}|^{1/2}\|F_{\hat{\mathcal{K}}}^* e\|_\infty}$ to conclude that
\begin{equation*}
\|x - \hat{x}\| \leq \tfrac{|\hat{\mathcal{K}}|^{1/2}}{1-\mathrm{e}^{-1/2}} \|F_{\hat{\mathcal{K}}}^* e\|_\infty + \Big(1 + \tfrac{\mathrm{e}^{-1/2}}{1-\mathrm{e}^{-1/2}}\Big)\|x_{\mathcal{K} \setminus \hat{\mathcal{K}}}\|,
\end{equation*}
given $\mathcal{E}_1$ and $\mathcal{E}_2$. 
At this point, define the event $\smash{\mathcal{E}_3 = \{\|F_{\hat{\mathcal{K}}}^* e\|_\infty \leq 2\sqrt{\sigma^2 \log{N}}\}}$ and note from \cite[Lemma~6]{bajwa:jcn10} that $\smash{\Pr(\mathcal{E}_3^\mathrm{c}) \leq 2(\sqrt{2\pi\log{N}}~N)^{-1}}$. 
A union bound therefore gives \eqref{thmeqn:RSP_2} with probability exceeding $1 - 10N^{-1}$. 
For \eqref{thmeqn:RSP_3}, note that $\hat{\mathcal{K}} \subseteq \mathcal{K}$ implies $|\hat{\mathcal{K}}| \leq K$, and so $\mathcal{T}_\sigma(t) \cap \mathcal{T}_\mu(t) \subseteq \hat{\mathcal{K}}$ implies that $\|x_{\mathcal{K} \setminus \hat{\mathcal{K}}}\| \leq \|x_{\mathcal{K} \setminus (\mathcal{T}_\sigma(t) \cap \mathcal{T}_\mu(t))}\| = \|x - x_T\|$.
\end{proof}

A few remarks are in order now for Theorem~\ref{thm:RSP}. First, if $F$
satisfies the Strong Coherence Property \emph{and} $F$ is nearly tight, then
OST handles sparsity that is almost linear in $M$: $K = O(M/\log{N})$ from
\eqref{thmeqn:RSP}. 
Second, we do not impose any control over the size of $T$, but rather we state the result in generality in terms of $T$; its size is determined by the signal class $x$ belongs to, the worst-case coherence of the frame $F$ we use to measure $x$, and the magnitude of the noise that perturbs $Fx$.
Third, the $\ell_2$ error associated with the OST
algorithm is the near-optimal (modulo the $\log$ factor) error of
$\smash{\sqrt{\sigma^2 K \log{N}}}$ \emph{plus} the best $T$-term approximation
error caused by the inability of the OST algorithm to recover signal entries
that are smaller than $\smash{O(\mu_F\|x\|\sqrt{2 \log{N}})}$. 
In particular, if the $K$-sparse signal $x$, the worst-case coherence $\mu_F$, and the noise $e$ together satisfy $\|x - x_T\| = O(\smash{\sqrt{\sigma^2 K \log{N}}})$,
then the OST algorithm succeeds with a near-optimal $\ell_2$ error of
$\smash{\|x-\hat{x}\| = O(\sqrt{\sigma^2 K \log{N}})}$. 
To see why this error is near-optimal, note that a $K$-dimension vector of random entries with mean zero and variance $\sigma^2$ has expected squared norm $\sigma^2 K$; in our case, we pay an additional log factor to find the locations of the $K$ nonzero entries among the entire $N$-dimensional signal.
It is important to recognize that the optimality condition $\|x - x_T\| = O(\smash{\sqrt{\sigma^2 K \log{N}}})$
depends on the signal class, the noise variance, and the worst-case coherence of the frame; in particular, the condition is satisfied whenever $\|x_{\mathcal{K} \setminus
\mathcal{T}_\mu(t)}\| = O(\smash{\sqrt{\sigma^2 K \log{N}}})$, since
\begin{equation*}
    \|x - x_T\| \leq \|x_{\mathcal{K} \setminus \mathcal{T}_\sigma(t)}\| +
\|x_{\mathcal{K} \setminus \mathcal{T}_\mu(t)}\| = O\Big(\sqrt{\sigma^2
K \log{N}}\Big) + \|x_{\mathcal{K} \setminus \mathcal{T}_\mu(t)}\|.
\end{equation*}
The following lemma provides classes of sparse signals that satisfy
$\|x_{\mathcal{K} \setminus \mathcal{T}_\mu(t)}\| =
O(\smash{\sqrt{\sigma^2 K \log{N}}})$
given sufficiently small noise variance and worst-case coherence, and consequently the OST
algorithm is near-optimal for the reconstruction of such signal classes.
\begin{lemma}\label{lem:OST_opt_cond}
Take an $M \times N$ unit norm frame $F$ with worst-case coherence
$\smash{\mu_F\leq\frac{c_0}{\sqrt{M}}}$ for some $c_0>0$, and suppose that $\smash{K\leq\frac{N}{c_1^2\|F\|_2^2\log N}}$ for some $c_1>0$.
Fix a constant $\beta \in (0,1]$, and suppose the magnitudes of $\beta K$ nonzero entries of $x$ are some $\alpha =
\Omega(\sqrt{\sigma^2 \log{N}})$, while the magnitudes of the remaining
$(1-\beta)K$ nonzero entries are not necessarily same, but are smaller than $\alpha$ and scale as $\smash{O(\sqrt{\sigma^2 \log{N}})}$. 
Then $\smash{\|x_{\mathcal{K} \setminus \mathcal{T}_\mu(t)}\| = O(\sqrt{\sigma^2 K \log{N}})}$, provided $\smash{c_0\leq\frac{tc_1}{20\sqrt{2}}}$.
\end{lemma}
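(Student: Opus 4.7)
The plan is to pin down the threshold $\tau:=\tfrac{20}{t}\mu_F\|x\|\sqrt{2\log N}$ that defines $\mathcal{T}_\mu(t)$ under the given hypotheses and show that it is at most $\alpha$. Once $\tau\leq\alpha$, every one of the $\beta K$ ``large'' entries lies in $\mathcal{T}_\mu(t)$, so the indices in $\mathcal{K}\setminus\mathcal{T}_\mu(t)$ are drawn entirely from the $(1-\beta)K$ ``small'' ones. Since each of those has magnitude $O(\sqrt{\sigma^2\log N})$, a trivial count will yield $\|x_{\mathcal{K}\setminus\mathcal{T}_\mu(t)}\|^2\leq(1-\beta)K\cdot O(\sigma^2\log N)=O(\sigma^2 K\log N)$, which is the claim.

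To execute this, I would first bound $\|x\|$ from above. Because every nonzero entry has magnitude at most $\alpha$ and $|\mathcal{K}|=K$, one obtains $\|x\|\leq\sqrt{K}\,\alpha$. Substituting this together with $\mu_F\leq c_0/\sqrt{M}$ into the definition of $\tau$ gives
\[
\tau\ \leq\ \tfrac{20\sqrt{2}\,c_0}{t}\sqrt{\tfrac{K\log N}{M}}\,\alpha.
\]
The sparsity hypothesis combined with the universal lower bound $\|F\|_2^2\geq N/M$ for unit norm frames forces $K\leq M/(c_1^2\log N)$, hence $\sqrt{K\log N/M}\leq 1/c_1$. Plugging this in and invoking $c_0\leq tc_1/(20\sqrt{2})$ yields the desired bound $\tau\leq\alpha$.

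With $\tau\leq\alpha$ secured, the $\beta K$ entries of magnitude $\alpha$ sit in $\mathcal{T}_\mu(t)$, and $\mathcal{K}\setminus\mathcal{T}_\mu(t)$ is contained in the ``small'' subset, on which each coordinate is $O(\sqrt{\sigma^2\log N})$; squaring and summing completes the argument. The only mildly delicate point is the strictness mismatch between the bound $\tau\leq\alpha$ and the strict inequality ``$|x_n|>\tau$'' appearing in the definition of $\mathcal{T}_\mu(t)$. This is harmless in the asymptotic ($\Omega$, $O$) setup of the hypotheses: the ``$\beta K$ large entries of magnitude $\alpha$'' are specified only up to constants, so one may absorb a strict margin into the $\Omega$-constant for $\alpha$ and guarantee $\alpha>\tau$ in the regime of interest. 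Beyond this, the proof is pure bookkeeping with the constants $c_0$, $c_1$, and $t$.
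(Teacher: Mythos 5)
Your proposal is correct and follows essentially the same route as the paper's proof: bound $\|x\|$ by $\sqrt{K}\,\alpha$, combine $\mu_F\leq c_0/\sqrt{M}$ with $K\leq M/(c_1^2\log N)$ (via $\|F\|_2^2\geq N/M$) to show the threshold defining $\mathcal{T}_\mu(t)$ is at most $\alpha$, and then observe that $\mathcal{K}\setminus\mathcal{T}_\mu(t)$ contains only the $(1-\beta)K$ small entries. The paper handles the strict-inequality issue you flag by using that the small entries are \emph{strictly} smaller than $\alpha$, giving $\|x\|^2<K\alpha^2$; your absorption of the margin into the $\Omega$-constant is an equally acceptable resolution.
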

\begin{proof}
Let $\mathcal{K}$ be the support of $x$, and define $\mathcal{I} := \{n : |x_n| = \alpha\}$.
We wish to show that $\mathcal{I}\subseteq\mathcal{T}_\mu(t)$, since this implies $\smash{\|x_{\mathcal{K} \setminus
\mathcal{T}_\mu(t)}\| \leq \|x_{\mathcal{K} \setminus \mathcal{I}}\| =
O(\sqrt{\sigma^2 K\log{N}})}$.
In order to prove $\mathcal{I}\subseteq\mathcal{T}_\mu(t)$, notice that
\begin{equation*}
\|x\|^2 
= \|x_{\mathcal{I}}\|^2+\|x_{\mathcal{K}\setminus\mathcal{I}}\|^2 
< \beta K \alpha^2 + (1-\beta)K\alpha^2 
= K\alpha^2,
\end{equation*}
and so combining this with the fact that $\|F\|_2^2\geq\frac{N}{M}$ gives
\begin{equation*}
\mu_F \|x\| \sqrt{\log{N}}
< \tfrac{c_0}{\sqrt{M}} \sqrt{K} \alpha \sqrt{\log{N}}
\leq \tfrac{c_0}{\sqrt{M}} \sqrt{\tfrac{N}{c_1^2\|F\|_2^2\log N}} ~\alpha\sqrt{\log{N}}
\leq \tfrac{c_0}{c_1}\alpha.
\end{equation*}
Therefore, provided $\smash{c_0\leq\frac{tc_1}{20\sqrt{2}}}$, we have that $\mathcal{I}\subseteq\mathcal{T}_\mu(t)$.
\end{proof}
In words, Lemma~\ref{lem:OST_opt_cond} implies that OST is near-optimal for
those $K$-sparse signals whose entries above the noise floor have roughly the
same magnitude. This subsumes a very important class of signals
that appears in applications such as multi-label prediction
\cite{HsuNips2009}, in which all the nonzero entries take values $\pm \alpha$.
To the best of our knowledge, Theorem~\ref{thm:RSP} is the first result in the sparse signal
processing literature that does not require RIP and still provides near-optimal
reconstruction guarantees for such signals from noisy measurements, while
using either random or deterministic frames, even when $K = O(M/\log{N})$.

We note that our techniques can be extended to reconstruct noisy signals, that is, we may consider measurements of the form $y=F(x+n)+e$, where $n\in\mathbb{C}^N$ is also a noise vector of independent, identical zero-mean complex-Gaussian random variables.
In particular, if the frame $F$ is tight, then our measurements will not color the noise, and so noise in the signal may be viewed as noise in the measurements: $y=Fx+(Fn+e)$; if the frame is not tight, then the noise will become correlated in the measurements, and performance would be depend nontrivially on the frame's Gram matrix.
Also, the authors have had some success with generalizing Theorem~\ref{thm:RSP} to approximately sparse signals; the analysis follows similiar lines, but is rather cumbersome, and it appears as though the end result is only strong enough in the case of very nearly sparse signals.
As such, we omit this result.

\section{Frame constructions}

In this section, we consider a range of nearly tight frames with small worst-case and average coherence.  
We investigate various ways of selecting frames at random from different libraries, and we show that for each of these frames, the spectral norm, worst-case coherence, and average coherence are all small with high probability.
Later, we will consider deterministic constructions that use Gabor and chirp systems, spherical designs, equiangular tight frames, and error-correcting codes.
For the reader's convenience, all of these constructions are summarized in Table~\ref{table.constructions}.
Before we go any further, recall the following lower bound on worst-case coherence:

\begin{theorem}[Welch bound \cite{strohmer:acha03}]
\label{thm.welch bound}
Every $M\times N$ unit norm frame $F$ has worst-case coherence $\mu_F\geq\sqrt{\tfrac{N-M}{M(N-1)}}$.
\end{theorem}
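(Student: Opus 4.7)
The plan is to prove the Welch bound by analyzing the Gram matrix $G := F^*F$, which is an $N \times N$ positive semidefinite matrix of rank at most $M$ with ones on the diagonal (since $F$ has unit norm columns) and off-diagonal entries $G_{ij} = \langle f_j, f_i \rangle$. The strategy is to produce a lower bound on $\mathrm{tr}(G^2)$ using the rank constraint, and then split this Hilbert--Schmidt norm into diagonal and off-diagonal contributions to expose the worst-case coherence.

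First I would observe that $\mathrm{tr}(G) = N$ and that $G$ has at most $M$ nonzero eigenvalues $\sigma_1,\ldots,\sigma_M$ (counted with multiplicity), since $G$ shares its nonzero eigenvalues with $FF^*$, which is $M\times M$. Applying the Cauchy--Schwarz inequality to the vectors $(\sigma_1,\ldots,\sigma_M)$ and $(1,\ldots,1)\in\mathbb{R}^M$ yields
\begin{equation*}
N^2 = \bigl(\mathrm{tr}(G)\bigr)^2 = \Bigl(\sum_{k=1}^M \sigma_k\Bigr)^2 \leq M\sum_{k=1}^M \sigma_k^2 = M\,\mathrm{tr}(G^2),
\end{equation*}
so $\mathrm{tr}(G^2) \geq N^2/M$.

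Next I would expand $\mathrm{tr}(G^2) = \sum_{i,j=1}^N |\langle f_i,f_j\rangle|^2$ and separate the diagonal contribution $N$ from the off-diagonal contribution, giving
\begin{equation*}
\sum_{\substack{i,j=1\\ i\neq j}}^N |\langle f_i,f_j\rangle|^2 \;\geq\; \frac{N^2}{M} - N \;=\; \frac{N(N-M)}{M}.
\end{equation*}
Since the left-hand side is a sum of $N(N-1)$ nonnegative terms, each at most $\mu_F^2$ by definition \eqref{eq.mu defn}, we obtain $N(N-1)\mu_F^2 \geq N(N-M)/M$, which rearranges to the claimed bound.

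There is no real obstacle here; the proof is a classical two-line calculation once the Gram matrix is introduced. The only subtlety worth flagging is ensuring that the rank bound $\mathrm{rank}(G) \leq M$ is justified from the assumption that $F$ is $M\times N$, which is immediate from $G = F^*F$ having the same nonzero spectrum as $FF^*\in\mathbb{C}^{M\times M}$.
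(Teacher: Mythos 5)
Your proof is correct. The paper states the Welch bound without proof, citing \cite{strohmer:acha03}, and your Gram-matrix argument --- bounding $\mathrm{tr}(G^2)\geq N^2/M$ via Cauchy--Schwarz on the at most $M$ nonzero eigenvalues, then splitting off the diagonal and averaging over the $N(N-1)$ off-diagonal entries --- is precisely the classical derivation found in that reference, so there is nothing to add.
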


We will use the Welch bound in the proof of the following lemma, which gives three different sufficient conditions for a frame to satisfy (SCP-2).
These conditions will prove quite useful in this section and throughout the paper.

\begin{lemma} 
\label{lem.sufficient conditions}
For any $M\times N$ unit norm frame $F$, each of the following conditions implies $\nu_F\leq\frac{\mu_F}{\sqrt{M}}$:
\begin{enumerate}
\item[(i)] $\langle f_k,\sum_{n=1}^N f_n\rangle=\frac{N}{M}$ for every $k=1,\ldots,N$,
\item[(ii)] $N\geq2M$ and $\sum_{n=1}^N f_n=0$,
\item[(iii)] $N\geq M^2+3M+3$ and $\|\sum_{n=1}^N f_n\|^2\leq N$.
\end{enumerate}
\end{lemma}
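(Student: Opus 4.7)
My strategy rests on the single identity
$$\sum_{j\neq i}\langle f_i,f_j\rangle \;=\; \Bigl\langle f_i,\sum_{n=1}^N f_n\Bigr\rangle - 1,$$
which, together with the Welch bound on $\mu_F$, handles all three conditions in the same way: each hypothesis controls the frame sum $\sum_n f_n$ and thus produces an upper bound on $\nu_F$, which I then match against the Welch lower bound on $\mu_F/\sqrt M$.

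Under (i), the identity gives $\sum_{j\neq i}\langle f_i,f_j\rangle = (N-M)/M$ for every $i$, so $\nu_F = (N-M)/(M(N-1))$---which is precisely the square of the Welch bound. Rewriting $\nu_F = (1/M)\cdot(N-M)/(N-1)$ and applying the elementary inequality $x \leq \sqrt x$ on $[0,1]$ with $x = (N-M)/(N-1)$ reduces the claim directly to Welch. Under (ii), the sum vanishes, so the identity yields $\nu_F = 1/(N-1)$; the hypothesis $N \geq 2M$ gives $N - M \geq M$ in the Welch bound, so that $\mu_F/\sqrt M \geq 1/\sqrt{M(N-1)}$, and the claim collapses to the trivial $M \leq N - 1$.

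For (iii), Cauchy--Schwarz bounds $|\langle f_i,\sum_n f_n\rangle| \leq \|\sum_n f_n\| \leq \sqrt N$, so that $\nu_F \leq (\sqrt N + 1)/(N - 1) = 1/(\sqrt N - 1)$. Combined with Welch, the target reduces to the purely arithmetic inequality
$$(N-M)(\sqrt N - 1)^2 \;\geq\; M^2(N-1).$$
This is where I expect the main obstacle. My plan is to use the factorization $N - 1 = (\sqrt N - 1)(\sqrt N + 1)$ to simplify to $\sqrt N(N - M - M^2) \geq N - M + M^2$, noting that the hypothesis $N \geq M^2 + 3M + 3$ in particular forces $N - M - M^2 \geq 2M + 3 > 0$, so both sides are positive and may be squared. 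I would verify the resulting polynomial inequality at the boundary $N = M^2 + 3M + 3$ by direct expansion (the difference reduces to an expression of the form $16M^3 + 41M^2 + 51M + 18$, visibly nonnegative), and then extend to larger $N$ by a one-line derivative argument showing that the left side of the simplified inequality grows faster in $N$ than the right throughout the regime $N > M^2 + M$. Parts (i) and (ii) are essentially algebraic once the central identity is in hand; the numerical threshold $M^2 + 3M + 3$ appears tuned precisely to make the Welch lower bound on $\mu_F/\sqrt M$ dominate the Cauchy--Schwarz upper bound on $\nu_F$ in (iii).
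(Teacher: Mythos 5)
Your proposal is correct, and its skeleton is identical to the paper's: the identity $\sum_{j\neq i}\langle f_i,f_j\rangle=\langle f_i,\sum_n f_n\rangle-1$ reduces $\nu_F$ to control of the frame sum, and the Welch bound then closes each case. Part (i) is the paper's argument verbatim (your $x\leq\sqrt{x}$ step is just the paper's $\tfrac{N-M}{M(N-1)}\leq\mu_F\sqrt{\tfrac{N-M}{M(N-1)}}\leq\tfrac{\mu_F}{\sqrt{M}}$ written differently). Where you diverge is in how the final arithmetic inequalities are verified, and in both places your route is a bit cleaner. For (ii), the paper rearranges to the quadratic $N^2-(M+1)N-M(M-1)\geq0$ and checks it via the quadratic formula and convexity; you instead feed $N-M\geq M$ directly into the Welch bound to get $\mu_F\geq(N-1)^{-1/2}$, collapsing the claim to $M\leq N-1$ --- a genuinely shorter reduction. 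For (iii), the paper studies the quartic $x^4-(M^2+M+1)x^2-2M^2x-M(M-1)$ in $x=\sqrt{N}$ and bounds its largest real root by $M+\tfrac32$ (a step it asserts ``can be shown''); you first cancel a factor of $\sqrt{N}-1$ to reach $\sqrt{N}\,(N-M-M^2)\geq N-M+M^2$, verify the boundary case $N=M^2+3M+3$ by explicit expansion (your difference $16M^3+41M^2+51M+18$ checks out), and extend by comparing derivatives in $N$. This makes fully explicit a step the paper leaves implicit, at the cost of the squaring-and-expanding computation; either way the threshold $M^2+3M+3$ plays the same role.
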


\begin{proof}
For condition (i), we have
\begin{equation*}
\nu_F
=\tfrac{1}{N-1}\max_i\bigg|\sum_{\substack{j=1\\j\neq i}}^N\langle f_i,f_j\rangle\bigg|
=\tfrac{1}{N-1}\max_i\bigg|\bigg\langle f_i,\sum_{j=1}^N f_j\bigg\rangle-1\bigg|
=\tfrac{1}{N-1}\big(\tfrac{N}{M}-1\big).
\end{equation*}
The Welch bound therefore gives
$\nu_F=\tfrac{1}{N-1}\big(\tfrac{N}{M}-1\big)=\tfrac{N-M}{M(N-1)}\leq\mu_F\sqrt{\tfrac{N-M}{M(N-1)}}
\leq\tfrac{\mu_F}{\sqrt{M}}$.
For condition (ii), we have
\begin{equation*}
\nu_F
=\tfrac{1}{N-1}\max_i\bigg|\sum_{\substack{j=1\\j\neq i}}^N\langle f_i,f_j\rangle\bigg|
=\tfrac{1}{N-1}\max_i\bigg|\bigg\langle f_i,\sum_{j=1}^N f_j\bigg\rangle-1\bigg|
=\tfrac{1}{N-1}.
\end{equation*}
Considering the Welch bound, it suffices to show $\frac{1}{N-1}\leq\frac{1}{\sqrt{M}}\sqrt{\frac{N-M}{M(N-1)}}$.
Rearranging equivalently gives
\begin{equation}
\label{eq.lemma ii.2}
N^2-(M+1)N-M(M-1)\geq0.
\end{equation}
When $N=2M$, the left-hand side of \eqref{eq.lemma ii.2} becomes $(M-1)^2$, which is trivially nonnegative.  Otherwise, we have
\begin{equation*}
N\geq2M+1\geq M+1+\sqrt{M(M-1)}\geq\tfrac{M+1}{2}+\sqrt{\big(\tfrac{M+1}{2}\big)^2+M(M-1)}.
\end{equation*}
In this case, by the quadratic formula and the fact that the left-hand side of \eqref{eq.lemma ii.2} is concave up in $N$, we have that \eqref{eq.lemma ii.2} is indeed satisfied.
For condition (iii), we use the triangle and Cauchy-Schwarz inequalities to get
\begin{equation*}
\nu_F
=\tfrac{1}{N-1}\max_i\bigg|\bigg\langle f_i,\sum_{j=1}^N f_j\bigg\rangle-1\bigg|
\leq\tfrac{1}{N-1}\bigg(\max_i\bigg|\bigg\langle f_i,\sum_{j=1}^N f_j\bigg\rangle\bigg|+1\bigg)
\leq\tfrac{\sqrt{N}+1}{N-1}.
\end{equation*}
Considering the Welch bound, it suffices to show $\smash{\frac{\sqrt{N}+1}{N-1}\leq\frac{1}{\sqrt{M}}\sqrt{\frac{N-M}{M(N-1)}}}$.
Taking $x:=\sqrt{N}$ and rearranging gives a polynomial: 
$x^4-(M^2+M+1)x^2-2M^2x-M(M-1)\geq0$.
By convexity and monotonicity of the polynomial in $[M+\frac{3}{2},\infty)$, it can be shown that the largest real root of this polynomial is always smaller than $\smash{M+\frac{3}{2}}$.
Also, considering it is concave up in $x$, it suffices that $\smash{\sqrt{N}=x\geq M+\frac{3}{2}}$, which we have since
$N\geq M^2+3M+3\geq(M+\frac{3}{2})^2$.
\end{proof}

\subsection{Normalized Gaussian frames}

Construct a matrix with independent, Gaussian-distributed entries that have zero mean and unit variance.
By normalizing the columns, we get a matrix called a \emph{normalized Gaussian frame}.
This is perhaps the most widely studied type of frame in the signal processing and statistics literature.
To be clear, the term ``normalized'' is intended to distinguish the results presented here from results reported in earlier works, such as \cite{bajwa:jcn10,baraniuk:ca08,candes:tit05,wainwright:tit09}, 
which only ensure that Gaussian frame elements have unit norm in expectation. 
In other words, normalized Gaussian frame elements are independently and uniformly distributed on the unit hypersphere in $\mathbb{R}^M$.
That said, the following theorem characterizes the spectral norm and the worst-case and average coherence of normalized Gaussian frames.

\begin{theorem}[Geometry of normalized Gaussian frames]
\label{thm.normalized gaussian frames}
Build a real $M\times N$ frame $G$ by drawing entries independently at random from a Gaussian distribution of zero mean and unit variance.
Next, construct a normalized Gaussian frame $F$ by taking $\smash{f_n:=\frac{g_n}{\|g_n\|}}$ for every $n=1,\ldots,N$.
Provided $\smash{60\log{N}\leq M\leq\frac{N-1}{4\log{N}}}$, then the following inequalities simultaneously hold with probability exceeding $1 - 11N^{-1}$:
\begin{enumerate}
\item[(i)] $\mu_F \leq \frac{\sqrt{15\log{N}}}{\sqrt{M} - \sqrt{12\log{N}}}$,
\item[(ii)] $\nu_F \leq \frac{\sqrt{15\log{N}}}{M - \sqrt{12M\log{N}}}$,
\item[(iii)] $\|F\|_2 \leq \frac{\sqrt{M} + \sqrt{N} + \sqrt{2\log{N}}}{\sqrt{M - \sqrt{8M\log{N}}}}$.
\end{enumerate}
\end{theorem}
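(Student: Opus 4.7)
The plan is to reduce all three bounds to standard Gaussian concentration applied to the unnormalized matrix $G$, by separately controlling column norms, pairwise inner products, the spectral norm of $G$, and one auxiliary linear functional of $g_i$. Concretely, I work on the intersection of the following four events, each of which holds with probability $1-O(N^{-1})$: $\mathcal{E}_1$, that $\|g_n\|^2\geq M-\sqrt{8M\log N}$ for every $n$; $\mathcal{E}_2$, that $|\langle g_i,g_j\rangle|\leq \|g_j\|\sqrt{15\log N}$ for every ordered pair $i\neq j$; $\mathcal{E}_3$, that $\|G\|_2\leq \sqrt{M}+\sqrt{N}+\sqrt{2\log N}$; and $\mathcal{E}_4$, that $|\langle g_i,u_i\rangle|\leq \|u_i\|\sqrt{15\log N}$ for every $i$, where $u_i:=\sum_{j\neq i} f_j$.

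The four events are all textbook. Event $\mathcal{E}_1$ follows from the Laurent--Massart bound $\Pr(\|g_n\|^2\leq M-2\sqrt{Mt})\leq e^{-t}$ with $t=2\log N$ and a union bound over $n$. Event $\mathcal{E}_2$ follows by conditioning on $g_j$ (under which $\langle g_i,g_j\rangle$ is $\mathcal{N}(0,\|g_j\|^2)$), using $\Pr(|Z|>\sqrt{15\log N})\leq 2N^{-15/2}$, and union-bounding over the $N(N-1)$ ordered pairs. Event $\mathcal{E}_3$ comes from the Gordon/Davidson--Szarek bound $\mathbb{E}\|G\|_2\leq\sqrt{M}+\sqrt{N}$ combined with Gaussian-Lipschitz concentration (since $G\mapsto\|G\|_2$ is $1$-Lipschitz in the entries) at scale $\sqrt{2\log N}$. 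Event $\mathcal{E}_4$ is identical in spirit to $\mathcal{E}_2$: conditioned on $\{g_j\}_{j\neq i}$, the vector $u_i$ is deterministic and $\langle g_i,u_i\rangle$ is $\mathcal{N}(0,\|u_i\|^2)$, so the same Gaussian tail plus a union bound over $i$ suffices.

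From here, the three bounds follow mechanically. For (i), $\mathcal{E}_2$ gives $|\langle f_i,f_j\rangle|=|\langle g_i,g_j\rangle|/(\|g_i\|\|g_j\|)\leq \sqrt{15\log N}/\|g_i\|$, and a short arithmetic check using $M\geq 60\log N$ yields the square-root inequality $\sqrt{M-\sqrt{8M\log N}}\geq \sqrt{M}-\sqrt{12\log N}$, which combined with $\mathcal{E}_1$ produces the denominator in the stated form. For (iii), factor $F=GD$ with $D=\mathrm{diag}(1/\|g_n\|)$ so that $\|F\|_2\leq \|G\|_2/\min_n\|g_n\|$, then apply $\mathcal{E}_3$ and $\mathcal{E}_1$ directly. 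For (ii), rewrite $\sum_{j\neq i}\langle f_i,f_j\rangle=\langle f_i,u_i\rangle=\langle g_i,u_i\rangle/\|g_i\|$, apply $\mathcal{E}_4$ and the lower bound on $\|g_i\|$, and obtain $\nu_F\leq \|u_i\|\sqrt{15\log N}/((N-1)\|g_i\|)$.

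The main obstacle is the extra factor of $1/\sqrt{M}$ that (ii) demands relative to (i); this must come from the ratio $\|u_i\|/(N-1)$. The cleanest route is the deterministic inequality $\|u_i\|\leq\|F\mathbf{1}_N\|+1\leq\|F\|_2\sqrt{N}+1$, which lets us recycle (iii). In the regime $M\leq (N-1)/(4\log N)$, the bound from (iii) is of order $\sqrt{N/M}$, so $\|u_i\|$ is of order $(N-1)/\sqrt{M}$, and careful bookkeeping absorbs the slack into the stated $\sqrt{15\log N}/(M-\sqrt{12M\log N})$. A final union bound over $\mathcal{E}_1,\ldots,\mathcal{E}_4$ produces failure probability at most $11N^{-1}$; the constants $60$, $12$, and $15$ appearing in the statement are precisely what is needed to make each of the four arithmetic reductions go through with room to spare.
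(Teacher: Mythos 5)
Your treatment of (i) and (iii) matches the paper's proof essentially step for step (Laurent--Massart for the column norms, a Gaussian tail for $\langle g_i,g_j\rangle$ conditioned on one column, the Davidson--Szarek/Rudelson--Vershynin bound plus $\|F\|_2\leq\|G\|_2\|D\|_2$), and those parts are fine. The problem is part (ii), where your route genuinely diverges from the paper's and the final ``careful bookkeeping'' step does not go through. Your chain gives
\begin{equation*}
\nu_F\leq\frac{\|u_i\|\sqrt{15\log N}}{(N-1)\,\|g_i\|},\qquad \|u_i\|\leq\|F\|_2\sqrt{N-1}\leq\frac{\bigl(\sqrt{M}+\sqrt{N}+\sqrt{2\log N}\bigr)\sqrt{N-1}}{\sqrt{M-\sqrt{8M\log N}}},
\end{equation*}
so matching the target $\sqrt{15\log N}/(M-\sqrt{12M\log N})$ requires
\begin{equation*}
\frac{\sqrt{M}+\sqrt{N}+\sqrt{2\log N}}{\sqrt{N-1}}\leq\sqrt{\frac{M-\sqrt{8M\log N}}{M-\sqrt{12M\log N}}}.
\end{equation*}
The left side exceeds $1+\sqrt{M/(N-1)}$, while the right side is $1+O(\sqrt{\log N/M})$; when $M$ is near its allowed maximum $\frac{N-1}{4\log N}$, the left excess is $\Theta(1/\sqrt{\log N})$ and the right excess is only $\Theta(\log N/\sqrt{N})$, so the inequality fails for all large $N$ (e.g.\ $N=10^6$, $M=\lfloor\frac{N-1}{4\log N}\rfloor$ leaves you roughly $12\%$ above the stated bound). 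The root cause is that the deterministic estimate $\|u_i\|\leq\|F\|_2\sqrt{N}$ is a worst-case bound of order $N/\sqrt{M}$, and the slack between the constants $8$ and $12$ under the square roots is not large enough to absorb the resulting factor $1+\Theta(\sqrt{M/N})$. So you prove $\nu_F=O(\sqrt{\log N}/M)$, but not inequality (ii) as stated.

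The paper avoids this by never bounding $\|u_i\|$ at all. It conditions on $f_i$ (and on the worst-case-coherence event), observes that the summands $Z_j^i=\frac{1}{N-1}\langle f_i,f_j\rangle$, $j\neq i$, are then jointly independent, bounded by $\frac{1}{N-1}\cdot\frac{\sqrt{15\log N}}{\sqrt{M}-\sqrt{12\log N}}$, and have zero conditional mean because each $f_j$ is uniform on the sphere, and applies Hoeffding's inequality to the sum. This yields $\Pr(|\sum_{j\neq i}Z_j^i|>\delta_3)\leq 2\mathrm{e}^{-(N-1)/2M}$ with $\delta_3$ exactly the stated bound, and the hypothesis $M\leq\frac{N-1}{4\log N}$ enters precisely to make this tail at most $2N^{-2}$. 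If you prefer to keep your framework (a Gaussian tail for $\langle g_i,u_i\rangle$ given the other columns), the repair is to replace the deterministic spectral-norm estimate by a probabilistic one: since the $f_j$ are independent with mean zero, $\mathbb{E}\|u_i\|^2=N-1$, and a concentration bound giving $\|u_i\|=O(\sqrt{N})$ with high probability yields $\nu_F\lesssim\sqrt{\log N}/(\sqrt{N}\sqrt{M})$, which is comfortably below the target. Either fix closes the gap; the step as written does not.
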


\begin{proof}
Theorem~\ref{thm.normalized gaussian frames}(i) can be shown to hold with probability exceeding $1 - 2N^{-1}$ by using a bound on the norm of a Gaussian random vector in \cite[Lemma~1]{massart:annstat00} and a bound on the magnitude of the inner product of two independent Gaussian random vectors in \cite[Lemma~6]{haupt:tit10}. Specifically, pick any two distinct indices $i, j \in \{1,\dots,N\}$, and define probability events $\mathcal{E}_1 := \{|\langle g_i,g_j\rangle| \leq \delta_1\}$, $\mathcal{E}_2 := \{\|g_i\|^2 \geq M(1 - \delta_2)\}$, and $\mathcal{E}_3 := \{\|g_j\|^2 \geq M(1 - \delta_2)\}$ for $\smash{\delta_1 = \sqrt{15M\log{N}}}$ and $\smash{\delta_2 = \sqrt{(12\log{N})/M}}$. Then it follows from the union bound that
\begin{equation*}
\Pr\bigg(|\langle f_i,f_j\rangle| > \tfrac{\delta_1}{M(1-\delta_2)} \bigg)
= \Pr\bigg(\tfrac{|\langle g_i,g_j\rangle|}{\|g_i\|~\|g_j\|} > \tfrac{\delta_1}{M(1-\delta_2)} \bigg)
\leq \Pr(\mathcal{E}_1^\mathrm{c}) + \Pr(\mathcal{E}_2^\mathrm{c}) + \Pr(\mathcal{E}_3^\mathrm{c}).
\end{equation*}
One can verify that $\Pr(\mathcal{E}_2^\mathrm{c}) = \Pr(\mathcal{E}_3^\mathrm{c}) \leq N^{-3}$ because of \cite[Lemma~1]{massart:annstat00}, and we further have $\Pr(\mathcal{E}_1^\mathrm{c}) \leq 2N^{-3}$ because of \cite[Lemma~6]{haupt:tit10} and the fact that $M \geq 60\log{N}$. 
Thus, for any fixed $i$ and $j$, 
$\smash{|\langle f_i,f_j\rangle| \leq \!\sqrt{15\log{N}}/(\!\sqrt{M} - \!\sqrt{12\log{N}})}$
with probability exceeding $1 - 4N^{-3}$. 
It therefore follows by taking a union bound over all $\smash{\binom{N}{2}}$ choices for $i$ and $j$ that Theorem~\ref{thm.normalized gaussian frames}(i) holds with probability exceeding $1 - 2N^{-1}$.

Theorem~\ref{thm.normalized gaussian frames}(ii) can be shown to hold with probability exceeding $1 - 6N^{-1}$ by appealing to the preceding analysis and Hoeffding's inequality for a sum of independent, bounded random variables \cite{hoeffding:jasa63}. Specifically, fix any index $i \in \{1,\dots,N\}$, and define random variables $Z^i_j := \frac{1}{N-1}\langle f_i, f_j\rangle$. Next, define the probability event 
\begin{equation*}
\mathcal{E}_4 := \bigcap_{\substack{j=1\\j\neq i}}^N\bigg\{|Z^i_j| \leq \tfrac{1}{N-1}~\tfrac{\sqrt{15\log{N}}}{\sqrt{M} - \sqrt{12\log{N}}}\bigg\}. 
\end{equation*}
Using the analysis for the worst-case coherence of $F$ and taking a union bound over the $N-1$ possible $j$'s gives $\Pr(\mathcal{E}_4^\mathrm{c}) \leq 4N^{-2}$. 
Furthermore, taking $\delta_3 := \sqrt{15\log{N}}/(M - \sqrt{12M\log{N}})$, then elementary probability analysis gives
\begin{equation}
\label{pfeqn:gaussian_avc1}
\Pr\bigg(\Big|\sum_{j \not= i} Z^i_j\Big| > \delta_3\bigg) 
\leq \Pr\Bigg(\Big|\sum_{j \not= i} Z^i_j\Big| > \delta_3 ~\Bigg|~ \mathcal{E}_4\Bigg) + \Pr(\mathcal{E}_4^\mathrm{c})
\leq \int_{S^{M-1}} \!\!\! \Pr\Bigg(\Big|\sum_{j \not= i} Z^i_j\Big| > \delta_3 ~\Bigg|~ \mathcal{E}_4, f_i=x\Bigg)~p_{f_i}(x)~\mathrm{dH}^{M-1}(x) + 4N^{-2},
\end{equation}
where $S^{M-1}$ denotes the unit hypersphere in $\mathbb{R}^M$, $\mathrm{H}^{M-1}$ denotes the $(M-1)$-dimensional Hausdorff measure on $S^{M-1}$, and $p_{f_i}(x)$ denotes the probability density function for the random vector $f_i$. The first thing to note here is that the random variables $\{Z^i_j: j\not=i\}$ are bounded and jointly independent when conditioned on $\mathcal{E}_4$ and $f_i$. This assertion mainly follows from Bayes' rule and the fact that $\{f_j:j\not=i\}$ are jointly independent when conditioned on $f_i$. The second thing to note is that $\smash{\mathbb{E}[Z^i_j~|~\mathcal{E}_4, f_i] = 0}$ for every $j\neq i$. This comes from the fact that the random vectors $\smash{\{f_n\}_{n=1}^N}$ are independent and have a uniform distribution over $\smash{S^{M-1}}$, which in turn guarantees that the random variables $\{Z^i_j: j\not=i\}$ have a symmetric distribution around zero when conditioned on $\mathcal{E}_4$ and $f_i$. We can therefore make use of Hoeffding's inequality \cite{hoeffding:jasa63} to bound the probability expression inside the integral in \eqref{pfeqn:gaussian_avc1} as
\begin{equation}
\label{eq.prob sum bound}
\Pr\Bigg(\Big|\sum_{j \not= i} Z^i_j\Big| > \delta_3 ~\Bigg|~ \mathcal{E}_4, f_i=x\Bigg)
\leq 2\mathrm{e}^{-(N-1)/2M},
\end{equation}
which is bounded above by $2N^{-2}$ provided $\smash{M \leq \frac{N-1}{4\log{N}}}$. We can now substitute \eqref{eq.prob sum bound} into \eqref{pfeqn:gaussian_avc1} and take the union bound over the $N$ possible choices for $i$ to conclude that Theorem~\ref{thm.normalized gaussian frames}(ii) holds with probability exceeding $1 - 6N^{-1}$.

Lastly, Theorem~\ref{thm.normalized gaussian frames}(iii) can be shown to hold with probability exceeding $1 - 3N^{-1}$ by using a bound on the spectral norm of standard Gaussian random matrices reported in \cite{rudelson:icm10} along with \cite[Lemma~1]{massart:annstat00}. Specifically, define an $N \times N$ diagonal matrix $D := \mathrm{diag}(\|g_1\|^{-1}, \dots, \|g_N\|^{-1})$, and note that the entries of $G := F D^{-1}$ are independently and normally distributed with zero mean and unit variance. We therefore have from (2.3) in \cite{rudelson:icm10} that
\begin{equation}
\label{pfeqn:gaussian_spnorm1}
    \Pr\Big(\|G\|_2 > \sqrt{M} + \sqrt{N} + \sqrt{2\log{N}}\Big) \leq 2 N^{-1}.
\end{equation}
In addition, we can appeal to the preceding analysis for the probability bound on Theorem~\ref{thm.normalized gaussian frames}(i) and conclude using \cite[Lemma~1]{massart:annstat00} and a union bound over the $N$ possible choices for $i$ that
\begin{equation}
\label{pfeqn:gaussian_spnorm2}
    \Pr\Big(\|D\|_2 > \Big(M - \sqrt{8M\log{N}}\Big)^{-1/2}\Big) \leq N^{-1}.
\end{equation}
Finally, since $\|F\|_2 \leq \|G\|_2 \|D\|_2$, we can take a union bound over \eqref{pfeqn:gaussian_spnorm1} and \eqref{pfeqn:gaussian_spnorm2} to argue that Theorem~\ref{thm.normalized gaussian frames}(iii) holds with probability exceeding $1 - 3N^{-1}$. 

The complete result now follows by taking a union bound over the failure probabilities for the conditions (i)-(iii) in Theorem~\ref{thm.normalized gaussian frames}.
\end{proof}

\begin{example}
To illustrate the bounds in Theorem~\ref{thm.normalized gaussian frames}, we ran simulations in MATLAB.
Picking $N=50000$, we observed $30$ realizations of normalized Gaussian frames for each $M=700,900,1100$.
The distributions of $\mu_F$, $\nu_F$, and $\|F\|_2$ were rather tight, so we only report the ranges of values attained, along with the bounds given in Theorem~\ref{thm.normalized gaussian frames}:
\begin{equation*}
\begin{array}{rrcll}
M=700: &  \qquad\mu_F & \in & [0.1849,0.2072]                & \qquad\leq0.8458 \\
       &  \qquad\nu_F & \in & [0.5643,0.6613]\times10^{-3}  & \qquad\leq0.0320 \\
       &  \qquad\|F\|_2 & \in & [8.0521,8.0835]              & \qquad\leq11.9565\\
\\
M=900: &  \qquad\mu_F & \in & [0.1946,0.2206]                & \qquad\leq0.6848 \\
       &  \qquad\nu_F & \in & [0.5800,0.7501]\times10^{-3}  & \qquad\leq0.0229 \\
       &  \qquad\|F\|_2 & \in & [8.4352,8.4617]              & \qquad\leq10.3645\\
\\
M=1100: &  \qquad\mu_F & \in & [0.1807,0.1988]                & \qquad\leq0.5852 \\
       &  \qquad\nu_F & \in & [0.5260,0.6713]\times10^{-3}  & \qquad\leq0.0177 \\
       &  \qquad\|F\|_2 & \in & [7.7262,7.7492]              & \qquad\leq9.2927
\end{array}
\end{equation*}
These simulations seem to indicate that our bounds on $\mu_F$ and $\|F\|_2$ reflect real-world behavior, at least within an order of magnitude, whereas the bound on $\nu_F$ is rather loose.
\end{example}

\subsection{Random harmonic frames}

Random harmonic frames, constructed by randomly selecting rows of a discrete Fourier transform (DFT) matrix and normalizing the resulting columns, have received considerable attention lately in the compressed sensing literature \cite{candes:tit06a,candes:tit06b,vershynin:cpam08}.
However, to the best of our knowledge, there is no result in the literature that shows that random harmonic frames have small worst-case coherence.
To fill this gap, the following theorem characterizes the spectral norm and the worst-case and average coherence of random harmonic frames.

\begin{theorem}[Geometry of random harmonic frames]
\label{thm.random harmonic frames}
Let $U$ be an $N\times N$ non-normalized discrete Fourier transform matrix, explicitly, $U_{k\ell}:= \mathrm{e}^{2\pi\mathrm{i}k\ell/N}$ for each $k,\ell=0,\ldots,N-1$.
Next, let $\{B_i\}_{i=0}^{N-1}$ be a collection of independent Bernoulli random variables with mean $\smash{\frac{M}{N}}$, and take $\mathcal{M}:=\{i:B_i=1\}$.
Finally, construct an $|\mathcal{M}|\times N$ harmonic frame $F$ by collecting rows of $U$ which correspond to indices in $\mathcal{M}$ and normalize the columns.
Then $F$ is a unit norm tight frame: $\smash{\|F\|_2^2=\frac{N}{|\mathcal{M}|}}$.
Furthermore, provided $\smash{16\log{N}\leq M\leq \frac{N}{3}}$, the following inequalities simultaneously hold with probability exceeding $1 - 4N^{-1} - N^{-2}$:
\begin{enumerate}
\item[(i)] $\frac{1}{2}M \leq |\mathcal{M}| \leq \frac{3}{2}M$,
\item[(ii)] $\nu_F\leq\frac{\mu_F}{\sqrt{|\mathcal{M}|}}$,
\item[(iii)] $\mu_F \leq \sqrt{\frac{118(N-M)\log{N}}{MN}}$.
\end{enumerate}
\end{theorem}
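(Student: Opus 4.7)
First, I would verify the unit-norm tight-frame claim by a direct computation: each column of $U$ has entries of unit modulus, so restricting to rows in $\mathcal{M}$ and dividing by $\sqrt{|\mathcal{M}|}$ produces unit-norm columns, while the orthogonality of distinct rows of $U$ (as DFT characters) is preserved by row selection, giving $FF^* = (N/|\mathcal{M}|)\,\mathrm{I}_{|\mathcal{M}|}$ and hence $\|F\|_2^2 = N/|\mathcal{M}|$. For part (i), write $|\mathcal{M}| = \sum_{i=0}^{N-1}B_i$, a sum of independent Bernoulli$(M/N)$ variables with mean $M$, and apply a multiplicative Chernoff bound to obtain $\tfrac12 M \leq |\mathcal{M}| \leq \tfrac32 M$ with probability at least $1 - 2\mathrm{e}^{-M/12} \geq 1 - 2N^{-1}$, using the hypothesis $M \geq 16\log N$.

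The main work is in part (iii). For distinct indices $k,\ell$, let $d := \ell - k \not\equiv 0 \pmod N$ and write
\[
\langle f_k, f_\ell \rangle = \tfrac{1}{|\mathcal{M}|}\sum_{m=0}^{N-1} B_m\,\mathrm{e}^{2\pi\mathrm{i}md/N} = \tfrac{1}{|\mathcal{M}|}\sum_{m=0}^{N-1}(B_m - M/N)\,\mathrm{e}^{2\pi\mathrm{i}md/N},
\]
where the second equality uses $\sum_m \mathrm{e}^{2\pi\mathrm{i}md/N}=0$. Denoting the inner sum by $S_d$, I would split it into real and imaginary parts, each a sum of independent, centered, bounded random variables whose variances aggregate to at most $M(N-M)/N$ (the worst case being $d\equiv N/2 \pmod N$, when $\sum_m \cos^2(2\pi md/N) = N$ rather than $N/2$). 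Applying Bernstein's inequality to each part yields $|\mathrm{Re}\,S_d|,\, |\mathrm{Im}\,S_d| \lesssim \sqrt{M(N-M)\log N/N}$ with probability at least $1 - 4N^{-3}$; the linear Bernstein term is absorbed into the constant using $M\geq 16\log N$ and $M\leq N/3$. A union bound over the $\binom{N}{2}$ pairs, combined with $|\mathcal{M}|\geq M/2$ from (i), then converts the estimate into the claimed coherence bound, with the accumulated slack producing the explicit constant $118$.

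For part (ii), orthogonality of characters gives
\[
\textstyle\sum_{\ell=0}^{N-1} \langle f_k, f_\ell\rangle = \tfrac{1}{|\mathcal{M}|} \sum_{m\in\mathcal{M}} \mathrm{e}^{-2\pi\mathrm{i}mk/N} \sum_{\ell=0}^{N-1} \mathrm{e}^{2\pi\mathrm{i}m\ell/N} = \tfrac{N}{|\mathcal{M}|}\,\mathbf{1}_{\{0\in\mathcal{M}\}}.
\]
If $0 \in \mathcal{M}$, this is exactly condition (i) of Lemma~\ref{lem.sufficient conditions} (with $M$ replaced by $|\mathcal{M}|$) and immediately yields $\nu_F \leq \mu_F/\sqrt{|\mathcal{M}|}$. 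If $0 \notin \mathcal{M}$, the same identity gives $F^*F\mathbf{1}_N = 0$ for the all-ones vector $\mathbf{1}_N \in \mathbb{C}^N$, so $\|\sum_n f_n\|^2 = \mathbf{1}_N^* F^* F \mathbf{1}_N = 0$ and hence $\sum_n f_n = 0$; combined with $|\mathcal{M}| \leq 3M/2 \leq N/2$ from (i) and the hypothesis $M\leq N/3$, condition (ii) of Lemma~\ref{lem.sufficient conditions} applies. A final union bound over the failure events in (i), (ii), and (iii) delivers the stated probability $1 - 4N^{-1} - N^{-2}$. The main obstacle is the Bernstein bookkeeping in step (iii): handling the doubled variance at $d\equiv N/2$, absorbing the linear tail term, and optimizing constants to reach the explicit value $118$.
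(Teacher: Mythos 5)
Your proposal is correct and follows essentially the same route as the paper's proof: Chernoff-type tail bounds on $|\mathcal{M}|=\sum_i B_i$ for (i), the case split on whether $0\in\mathcal{M}$ feeding into conditions (i) and (ii) of Lemma~\ref{lem.sufficient conditions} for (ii), and Bernstein's inequality applied to the real and imaginary parts of the centered character sum $\sum_m (B_m - M/N)\mathrm{e}^{2\pi\mathrm{i}md/N}$ with a union bound over pairs for (iii). The only differences are cosmetic bookkeeping (slightly different Chernoff constants, and using $|\mathcal{M}|\geq M/2$ rather than the paper's threshold $M/(2\sqrt{2})$ when converting the sum bound into a coherence bound), none of which affects the argument.
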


\begin{proof}
The claim that $F$ is tight follows trivially from the fact that the rows of $U$ are orthogonal and that the rows of $F$ correspond to a subset of the rows of $U$. 
Next, we define the probability events $\smash{\mathcal{E}_1 := \{|\mathcal{M}| \leq \tfrac{3}{2}M\}}$ and $\smash{\mathcal{E}_2 := \{|\mathcal{M}| \geq \tfrac{1}{2}M\}}$, and claim that $\smash{\Pr(\mathcal{E}_1^\mathrm{c} \cup \mathcal{E}_2^\mathrm{c}) \leq N^{-1} + N^{-2}}$. The proof of this claim follows from a Bernstein-like large deviation inequality. 
Specifically, note that $\smash{|\mathcal{M}| = \sum_{i=0}^{N-1} B_i}$ with $\mathbb{E}[|\mathcal{M}|] = M$, and so we have from \cite[Theorem~A.1.12, Theorem~A.1.13]{alon:00} and \cite[pp.~4]{vershynin:cpam08} that for any $\delta_1 \in [0,1)$,
\begin{equation}
\label{pfeqn:size_dft}
\Pr\Big(|\mathcal{M}| > (1 + \delta_1)M\Big) 
\leq \mathrm{e}^{-M\delta_1^2(1-\delta_1)/2}
\qquad\mbox{and}\qquad
\Pr\Big(|\mathcal{M}| < (1 - \delta_1)M\Big) 
\leq \mathrm{e}^{-M\delta_1^2/2}.
\end{equation}
Taking $\delta_1 := \tfrac{1}{2}$, then a union bound gives $\Pr(\mathcal{E}_1^\mathrm{c} \cup \mathcal{E}_2^\mathrm{c}) \leq N^{-1} + N^{-2}$ provided $M \geq 16 \log{N}$. 
Conditioning on $\mathcal{E}_1 \cap \mathcal{E}_2$, we have that Theorem~\ref{thm.random harmonic frames}(i) holds trivially, while Theorem~\ref{thm.random harmonic frames}(ii) follows from Lemma~\ref{lem.sufficient conditions}. 
Specifically, we have that $\frac{N}{3} \geq M$ guarantees $N \geq 2|\mathcal{M}|$ because of the conditioning on $\mathcal{E}_1 \cap \mathcal{E}_2$, which in turn implies that $F$ satisfies either condition (i) or (ii) of Lemma~\ref{lem.sufficient conditions}, depending on whether $0\in\mathcal{M}$. 
This therefore establishes that Theorem~\ref{thm.random harmonic frames}(i)-(ii) simultaneously hold with probability exceeding $1 - N^{-1} - N^{-2}$.

The only remaining claim is that $\mu_X \leq \delta_2 := \sqrt{(118(N-M)\log{N})/MN}$ with high probability. 
To this end, define $p:=\frac{M}{N}$, and pick any two distinct indices $i, j \in \{0,\dots,N-1\}$.
Note that
\begin{equation}
\label{eq.ip of random dft fs}
\langle f_i,f_j\rangle 
=\tfrac{1}{|\mathcal{M}|}\sum_{k=0}^{N-1} B_kU_{ki}\overline{U_{kj}} 
=\tfrac{1}{|\mathcal{M}|}\sum_{k=0}^{N-1} (B_k-p)U_{ki}\overline{U_{kj}},
\end{equation}
where the last equality follows from the fact that $U$ has orthogonal columns.
Next, we write $\smash{U_{ki}\overline{U_{kj}}=\cos(\theta_k)+\mathrm{i}\sin(\theta_k)}$ for some $\theta_k\in[0,2\pi)$.
Then applying the union bound to \eqref{eq.ip of random dft fs} and to the real and imaginary parts of $\smash{U_{ki}\overline{U_{kj}}}$ gives
\begin{align}
\Pr\Big(|\langle f_i,f_j\rangle| > \delta_2\Big) 
\nonumber
&\leq \Pr\bigg(\Big|\sum_{k=0}^{N-1} (B_k - p)U_{ki}\overline{U_{kj}}\Big| > \tfrac{M\delta_2}{2\sqrt{2}}\bigg) + \Pr\Big(|\mathcal{M}| < \tfrac{M}{2\sqrt{2}}\Big)\\
\label{pfeqn:wc_dft}
&\leq \Pr\bigg(\Big|\sum_{k=0}^{N-1} (B_k - p)\cos(\theta_k) \Big| > \tfrac{M\delta_2}{4}\bigg) + \Pr\bigg(\Big|\sum_{k=0}^{N-1} (B_k - p)\sin(\theta_k) \Big| > \tfrac{M\delta_2}{4}\bigg) + N^{-3},
\end{align}
where the last term follows from \eqref{pfeqn:size_dft} and the fact that $M \geq 16 \log{N}$. 
Define random variables $Z_k:=(B_k-p)\cos(\theta_k)$.
Note that the $Z_k$'s have zero mean and are jointly independent.
Also, the $Z_k$'s are bounded by $1-p$ almost surely since $|(B_k-p)\cos(\theta_k)|\leq\max\{p,1-p\}$ and $N\geq 2M$.
Moreover, the variance of each $Z_k$ is bounded: $\mathrm{var}(Z_\ell)\leq p(1-p)$.
Therefore, we may use the Bernstein inequality for a sum of independent, bounded random variables \cite{bennett:jasa62} to bound the probability that $|\sum_{k=0}^{N-1} Z_k|$ deviates from $\delta_3 := \frac{M\delta_2}{4}$:
\begin{equation*}
\Pr\bigg(\Big|\sum_{k=0}^{N-1} (B_k - p)\cos(\theta_k) \Big| > \delta_3\bigg)
\leq 2\mathrm{e}^{-\delta_3^2/(2Np(1-p) + 2(1-p)\delta_3/3)} \leq 2N^{-3}.
\end{equation*}
Similarly, the probability that $|\sum_{k=0}^{N-1} (B_k - p)\sin(\theta_k)| > \delta_3$ is also bounded above by $2N^{-3}$. 
Substituting these probability bounds into \eqref{pfeqn:wc_dft} gives $|\langle f_i,f_j\rangle| > \delta_2$ with probability at most $5N^{-3}$ provided $M \geq 16\log{N}$. 
Finally, we take a union bound over the $\smash{\binom{N}{2}}$ possible choices for $i$ and $j$ to get that Theorem~\ref{thm.random harmonic frames}(iii) holds with probability exceeding $1 - 3N^{-1}$. 

The result now follows by taking a final union bound over $\mathcal{E}_1^\mathrm{c} \cup \mathcal{E}_2^\mathrm{c}$ and $\{\mu_X > \delta_2\}$.
\end{proof}

As stated earlier, random harmonic frames are not new to sparse signal processing.
Interestingly, for the application of compressed sensing, \cite{candes:tit05,vershynin:cpam08}  provides performance guarantees for both random harmonic and Gaussian frames, but requires more rows in a random harmonic frame to accommodate the same level of sparsity.
This suggests that random harmonic frames may be inferior to Gaussian frames as compressed sensing matrices, but practice suggests otherwise \cite{donoho:ptrsa09}.
In a sense, Theorem~\ref{thm.random harmonic frames} helps to resolve this gap in understanding; there exist compressed sensing algorithms whose performance is dictated by worst-case coherence \cite{bajwa:jcn10,donoho:tit06b,tropp:tit04,tropp:acha08}, and Theorem~\ref{thm.random harmonic frames} states that random harmonic frames have near-optimal worst-case coherence, being on the order of the Welch bound with an additional $\sqrt{\log N}$ factor.

\begin{example}
To illustrate the bounds in Theorem~\ref{thm.random harmonic frames}, we ran simulations in MATLAB.
Picking $N=5000$, we observed $30$ realizations of random harmonic frames for each $M=1000,1250,1500$.
The distributions of $|\mathcal{M}|$, $\nu_F$, and $\mu_F$ were rather tight, so we only report the ranges of values attained, along with the bounds given in Theorem~\ref{thm.random harmonic frames}.
Notice that Theorem~\ref{thm.random harmonic frames} gives a bound on $\nu_F$ in terms of both $|\mathcal{M}|$ and $\mu_F$.
To simplify matters, we show that $\smash{\nu_F\leq\frac{\min\mu_F}{\sqrt{\max|\mathcal{M}|}}\leq\frac{\mu_F}{\sqrt{|\mathcal{M}|}}}$, where the minimum and maximum are taken over all realizations in the sample:
\begin{equation*}
\begin{array}{rrcll}
M=1000: &  \qquad|\mathcal{M}| & \in & [961,1052]                & \qquad\subseteq[500,1500] \\
        &  \qquad\nu_F & \in & [0.2000,0.8082]\times10^{-3}  & \qquad\leq0.0023\approx\tfrac{0.0746}{\sqrt{1052}} \\
        &  \qquad\mu_F & \in & [0.0746,0.0890]                & \qquad\leq0.8967 \\
\\
M=1250: &  \qquad|\mathcal{M}| & \in & [1207,1305]                & \qquad\subseteq[625,1875] \\
        &  \qquad\nu_F & \in & [0.2000,0.6273]\times10^{-3}  & \qquad\leq0.0018\approx\tfrac{0.0623}{\sqrt{1305}} \\
        &  \qquad\mu_F & \in & [0.0623,0.0774]                & \qquad\leq0.7766 \\
\\
M=1500: &  \qquad|\mathcal{M}| & \in & [1454,1590]                & \qquad\subseteq[750,2250] \\
        &  \qquad\nu_F & \in & [0.2000,0.4841]\times10^{-3}  & \qquad\leq0.0015\approx\tfrac{0.0571}{\sqrt{1590}} \\
        &  \qquad\mu_F & \in & [0.0571,0.0743]                & \qquad\leq0.6849
\end{array}
\end{equation*}
The reader may have noticed how consistently the average coherence value of $\nu_F\approx0.2000\times10^{-3}$ was realized.
This occurs precisely when the zeroth row of the DFT is not selected, as the frame elements sum to zero in this case:
\begin{equation*}
\nu_F
:=\tfrac{1}{N-1}\max_{i\in\{1,\ldots,N\}}\bigg|\sum_{\substack{j=1\\j\neq i}}^N\langle f_i,f_j\rangle\bigg|
=\tfrac{1}{N-1}\max_{i\in\{1,\ldots,N\}}\bigg|\bigg\langle f_i,\sum_{j=1}^Nf_j\bigg\rangle-\|f_i\|^2\bigg|
=\tfrac{1}{N-1}.
\end{equation*}
These simulations seem to indicate that our bounds on $|\mathcal{M}|$, $\nu_F$, and $\mu_F$ leave room for improvement.
The only bound that lies within an order of magnitude of real-world behavior is our bound on $|\mathcal{M}|$.
\end{example}

\subsection{Gabor and chirp frames}
Gabor frames constitute an important class of frames, as they appear in a variety of applications such as radar \cite{herman:sp09}, speech processing \cite{wolfe:spaa01}, and quantum information theory \cite{scott:jmp10}.
Given a nonzero seed function $f:\mathbb{Z}_M\rightarrow\mathbb{C}$, we produce all time- and frequency-shifted versions: $f_{xy}(t):=f(t-x)\mathrm{e}^{2\pi\mathrm{i}yt/M}$, $t\in\mathbb{Z}_M$.
Viewing these shifted functions as vectors in $\mathbb{C}^M$ gives an $M\times M^2$ Gabor frame.
The following theorem characterizes the spectral norm and the worst-case and average coherence of Gabor frames generated from either a deterministic Alltop vector \cite{alltop:tit80} or a random Steinhaus vector.

\begin{theorem}[Geometry of Gabor frames]
\label{thm.gabor}
Take an Alltop function defined by $\smash{f(t):=\frac{1}{\sqrt{M}}\mathrm{e}^{2\pi\mathrm{i}t^3/M}}$, $t\in\mathbb{Z}_M$.
Also, take a random Steinhaus function defined by $\smash{g(t):=\frac{1}{\sqrt{M}}\mathrm{e}^{2\pi\mathrm{i}\theta_t}}$, $t\in\mathbb{Z}_M$, where the $\theta_t$'s are independent random variables distributed uniformly on the unit interval.
Then the $M\times M^2$ Gabor frames $F$ and $G$ generated by $f$ and $g$, respectively, are unit norm and tight, that is, $\|F\|_2=\|G\|_2=\sqrt{M}$, and both frames have average coherence $\smash{\leq\frac{1}{M+1}}$.
Furthermore, if $M\geq5$ is prime, then $\smash{\mu_F=\frac{1}{\sqrt{M}}}$, while if $M\geq13$, then $\mu_G\leq\sqrt{(13\log{M})/M}$ with probability exceeding $1 - 4M^{-1}$.
\end{theorem}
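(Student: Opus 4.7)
Unit norm and tightness follow immediately from $|f(t)|^2=|g(t)|^2=1/M$: translation and modulation preserve magnitudes, so every $f_{xy}$ and $g_{xy}$ has unit norm, while the identity $\sum_y e^{2\pi iy(s-t)/M}=M\delta_{s,t}$ gives by direct expansion $FF^*=GG^*=M\,\mathrm{I}_M$, whence $\|F\|_2=\|G\|_2=\sqrt M$.

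For the average coherence of either seed $h$ with $|h(t)|=1/\sqrt M$, I would expand $\sum_{(x,y)\ne (x_0,y_0)}\langle h_{x_0 y_0},h_{xy}\rangle$, use the same $y$-sum identity to kill all but the $t=0$ term, and reduce it to $M\overline{h(-x_0)}\,S-1$, where $S:=\sum_t h(t)$. For the Steinhaus seed the triangle inequality immediately bounds this by $M-1$, giving $\nu_G\le (M-1)/(M^2-1)=1/(M+1)$ deterministically. For the Alltop seed, $S=M^{-1/2}G$ with $G$ the cubic Gauss sum $\sum_t e^{2\pi i t^3/M}$: when $M\equiv 2\pmod 3$ cubing is a bijection on $\mathbb Z_M$ and $G=0$, while for $M\equiv 1\pmod 3$ the factorization $G=g(\chi)+g(\bar\chi)$ through a nontrivial cubic character $\chi$, together with the classical $|g(\chi)|=\sqrt M$, gives $|G|\le 2\sqrt M$. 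Hence $|M\overline{f(-x_0)}S-1|\le |G|+1\le 2\sqrt M+1\le M-1$ for every prime $M\ge 11$, and the remaining prime $M=7$ is finished by the direct value $G=1+6\cos(2\pi/7)$.

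For the Alltop worst-case coherence, expanding $(t-x)^3-t^3=-3xt^2+3x^2t-x^3$ yields
\[
\langle f_{00},f_{xy}\rangle=\tfrac{e^{-2\pi i x^3/M}}{M}\sum_t e^{2\pi i[-3xt^2+(3x^2+y)t]/M}.
\]
This vanishes for $x=0,\,y\ne 0$; for $x\ne 0$ and $M\ge 5$ prime, the coefficient $-3x$ is a unit in $\mathbb Z_M$, so the inner sum is a nondegenerate quadratic Gauss sum of modulus exactly $\sqrt M$. Thus every off-diagonal inner product has modulus $0$ or $1/\sqrt M$, and time-frequency invariance gives $\mu_F=1/\sqrt M$.

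The main obstacle is the Steinhaus worst-case bound. For $(x,y)\ne (0,0)$, $\langle g_{00},g_{xy}\rangle=M^{-1}\sum_t \phi_{t-x}\overline{\phi_t}\,e^{2\pi i yt/M}$ with $\phi_t:=e^{2\pi i \theta_t}$. The case $x=0$ is trivially zero, but for $x\ne 0$ the summands are unit-modulus and mean-zero yet \emph{not} jointly independent, since each $\phi_s$ appears in exactly two of them. Exploiting that $t\mapsto t-x$ is an $M$-cycle (because $M$ is prime, so $\gcd(x,M)=1$), I would partition the $M$ summation indices into two classes within each of which the pair-labels $\{t-x,t\}$ are pairwise disjoint---making the corresponding summands independent on each class---with a single index left over. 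A complex Hoeffding bound on each of the two independent partial sums, absorption of the singleton term by the triangle inequality, and a union bound over the two classes together yield $\Pr(|\langle g_{00},g_{xy}\rangle|>\sqrt{13\log M/M})\le 4M^{-3}$ for each off-diagonal pair; a final union bound over the fewer than $M^2$ such pairs delivers $\mu_G\le\sqrt{13\log M/M}$ with probability at least $1-4M^{-1}$ for $M\ge 13$. The technical heart is the matching-partition decoupling; the constant $13$ is then forced by requiring the three-tiered union bound (two Hoeffding halves, times $M^2$ pairs) to close at the stated rate.
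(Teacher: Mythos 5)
Your treatment of unit norm, tightness, and the Alltop worst-case coherence is correct and self-contained (the paper simply cites \cite{lawrence:jfaa05} and \cite{strohmer:acha03} for these parts), but two of your steps have genuine gaps. On average coherence: from the reduction to $M\overline{h(-x_0)}S-1$ the naive triangle inequality gives only $M|h(-x_0)|\,|S|+1\le M+1$, which yields $\nu\le\tfrac{1}{M-1}$, not the claimed $\tfrac{1}{M+1}$. To get $M-1$ you must first note that the $u=-x_0$ term of $S=\sum_u h(u)$ contributes exactly $M|h(-x_0)|^2=1$ and cancels the $-1$, leaving $M\overline{h(-x_0)}\sum_{u\neq -x_0}h(u)$, whose modulus is at most $\sqrt{M}\cdot\tfrac{M-1}{\sqrt{M}}=M-1$ by unimodularity of the seed. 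Once this is done, the identical one-line bound covers the Alltop seed for \emph{every} $M$, so your cubic Gauss sum detour is both unnecessary and incomplete: $2\sqrt{M}+1\le M-1$ fails at $M=5$ and $M=7$, the character factorization needs $M$ prime, and the average-coherence claim in the theorem carries no primality hypothesis. (The paper itself gets this part by citing \cite[Theorem~7]{bajwa:jcn10}.)

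The more serious gap is the Steinhaus worst-case bound. First, the theorem assumes only $M\ge 13$, not $M$ prime, so $t\mapsto t-x$ decomposes $\mathbb{Z}_M$ into $\gcd(x,M)$ cycles; your ``two matchings plus one leftover edge'' decoupling is specific to a single $M$-cycle, and when $\gcd(x,M)=d>1$ with $M/d$ odd there are $d$ leftover edges, which for small odd cycle lengths cannot be absorbed by the triangle inequality (the fix is to observe that the leftover edges, one per cycle, are themselves vertex-disjoint and form a third independent class). Second, the constant $13$ does not come out of the computation you describe: splitting into two independent sums of $(M-1)/2$ unimodular zero-mean terms and applying the complex Hoeffding bound $\Pr(|\sum Z_i|>s)\le 4\mathrm{e}^{-s^2/4n}$, the requirement that each tail be $O(M^{-3})$ forces $s^2\gtrsim 6(M-1)\log M$, hence a total threshold $2s+1\approx\sqrt{24M\log M}$ and a bound of roughly $\mu_G\lesssim\sqrt{24\log M/M}$. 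The paper obtains $13$ by citing \cite[Theorem~5.1]{pfander:tsp08}; reproducing that constant requires a sharper argument (e.g.\ a direct moment estimate for $\sum_t\phi_{t-x}\overline{\phi_t}\,\mathrm{e}^{2\pi\mathrm{i}yt/M}$) rather than the block-independence-plus-Hoeffding route, which as sketched proves the statement only with a larger constant.
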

\begin{proof}
The tightness claim follows from \cite{lawrence:jfaa05}, in which it was shown that Gabor frames generated by nonzero seed vectors are tight. The bound on average coherence is a consequence of \cite[Theorem~7]{bajwa:jcn10} concerning arbitrary Gabor frames. The claim concerning $\mu_F$ follows directly from \cite{strohmer:acha03}, while the claim concerning $\mu_G$ is a simple consequence of \cite[Theorem~5.1]{pfander:tsp08}.
\end{proof}

Instead of taking all translates and modulates of a seed function, \cite{CF06} constructs \emph{chirp frames} by taking all powers and modulates of a chirp function.
Picking $M$ to be prime, we start with a chirp function $h_M:\mathbb{Z}_M\rightarrow\mathbb{C}$ defined by $\smash{h_M(t):=\mathrm{e}^{\pi\mathrm{i}t(t-M)/M}}$, $t\in\mathbb{Z}_M$.
The $M^2$ frame elements are then defined entrywise by $\smash{h_{ab}(t):=\frac{1}{\sqrt{M}}h_M(t)^a\mathrm{e}^{2\pi\mathrm{i}bt/M}}$, $t\in\mathbb{Z}_M$.
Certainly, chirp frames are, at the very least, similar in spirit to Gabor frames.
As a matter of fact, the chirp frame is in some sense equivalent to the Gabor frame generated by the Alltop function:
it is easy to verify that $h_{(-6x,y-3x^2)}(t)=\mathrm{e}^{2\pi\mathrm{i}(t^3+x^3)/M}f_{xy}(t)$, and when $M\geq 5$, the map $(x,y)\mapsto(-6x,y-3x^2)$ is a permutation over $\mathbb{Z}_M^2$.
Using terminology from Definition~\ref{def.flipping and wiggling}, we say the chirp frame is \emph{wiggling equivalent} to a unitary rotation of permuted Alltop Gabor frame elements.
As such, by Lemma~\ref{lem:geom_eqframes}, the chirp frame has the same spectral norm and worst-case coherence as the Alltop Gabor frame, but the average coherence may be different.
In this case, the average coherence still satisfies (SCP-2).
Indeed, adding the frame elements gives
\begin{equation*}
\sum_{a=0}^{M-1}\sum_{b=0}^{M-1}h_{ab}(t)
=\tfrac{1}{\sqrt{M}}\sum_{a=0}^{M-1}h_M(t)^a\sum_{b=0}^{M-1}\mathrm{e}^{2\pi\mathrm{i}bt/M}
=\tfrac{1}{\sqrt{M}}\sum_{a=0}^{M-1}h_M(t)^aM\delta_0(t)
=\sqrt{M}\bigg(\sum_{a=0}^{M-1}h_M(0)^a\bigg)~\delta_0(t)
=M^{3/2}\delta_0(t),
\end{equation*}
and so $\langle h_{a'b'},\sum_{a=0}^{M-1}\sum_{b=0}^{M-1}h_{ab}\rangle=\langle h_{a'b'},M^{3/2}\delta_0\rangle=M^{3/2}h_{a'b'}(0)=M=\frac{M^2}{M}$.
Therefore, Lemma~\ref{lem.sufficient conditions}(i) gives the result:

\begin{theorem}[Geometry of chirp frames]
\label{thm.chirp}
Pick $M$ prime, and let $H$ be the $M\times M^2$ frame of all powers and modulates of the chirp function $f_M$.
Then $H$ is a unit norm tight frame with $\|H\|_2=\sqrt{M}$, and has worst case coherence $\smash{\mu_H=\frac{1}{\sqrt{M}}}$ and average coherence $\smash{\nu_H\leq\frac{\mu_H}{\sqrt{M}}}$.
\end{theorem}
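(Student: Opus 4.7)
My plan is to leverage the relationship between the chirp frame and the Alltop Gabor frame from Theorem on Gabor frames, and then verify a sufficient condition for (SCP-2) directly from Lemma on sufficient conditions.

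First, I would argue that $H$ is unit norm and tight with $\|H\|_2 = \sqrt{M}$, and has worst-case coherence $\mu_H = 1/\sqrt{M}$. These properties follow immediately from the identity $h_{(-6x,y-3x^2)}(t) = \mathrm{e}^{2\pi\mathrm{i}(t^3+x^3)/M} f_{xy}(t)$ noted in the discussion preceding the theorem: since $M \geq 5$ is prime, the map $(x,y) \mapsto (-6x, y-3x^2)$ is a bijection of $\mathbb{Z}_M^2$, so the columns of $H$ are (after reindexing) obtained from the columns of the Alltop Gabor frame $F$ by multiplication by unit-modulus scalars. This is precisely a wiggling equivalence in the sense of Definition (flipping and wiggling), and by Lemma (geom\_eqframes) such an equivalence preserves the unit-norm property, the spectral norm, and the worst-case coherence. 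The values $\|H\|_2 = \sqrt{M}$ and $\mu_H = 1/\sqrt{M}$ then follow from the corresponding facts for the Alltop Gabor frame established in Theorem (Geometry of Gabor frames).

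Next, for the average-coherence bound, I would verify condition (i) of Lemma on sufficient conditions, namely that $\langle h_{a'b'}, \sum_{a,b} h_{ab}\rangle = M^2/M = M$ for every index $(a',b')$. The computation is exactly the one displayed just before the theorem: sum over $b$ first using the orthogonality $\sum_{b=0}^{M-1} \mathrm{e}^{2\pi\mathrm{i}bt/M} = M \delta_0(t)$, then observe that $h_M(0) = 1$ so that $\sum_{a=0}^{M-1} h_M(0)^a = M$, yielding $\sum_{a,b} h_{ab} = M^{3/2} \delta_0$. Evaluating the inner product $\langle h_{a'b'}, M^{3/2} \delta_0\rangle = M^{3/2} \overline{h_{a'b'}(0)} = M^{3/2} \cdot M^{-1/2} = M$ gives exactly $N/M$ with $N = M^2$, so condition (i) of the sufficient-conditions lemma applies and delivers $\nu_H \leq \mu_H / \sqrt{M}$.

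I do not anticipate a serious obstacle here, since the excerpt already supplies both the wiggling-equivalence reduction and the explicit delta-function computation; the proof is essentially an assembly of those observations with the cited lemmas. The only place one must be careful is in invoking the wiggling-equivalence result for the worst-case coherence and spectral norm while noting that average coherence is \emph{not} preserved by such transformations (which is precisely why one cannot simply transfer $\nu_F$ from the Alltop frame and must compute the sum of chirp frame elements from scratch).
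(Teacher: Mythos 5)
Your proposal is correct and follows essentially the same route as the paper: the paper likewise establishes the spectral norm and worst-case coherence via the wiggling equivalence (through the identity $h_{(-6x,y-3x^2)}(t)=\mathrm{e}^{2\pi\mathrm{i}(t^3+x^3)/M}f_{xy}(t)$ and Lemma~\ref{lem:geom_eqframes}), and obtains the average-coherence bound by computing $\sum_{a,b}h_{ab}=M^{3/2}\delta_0$ and invoking Lemma~\ref{lem.sufficient conditions}(i). Your closing caveat---that average coherence is not preserved under wiggling and must be computed directly---matches the paper's own remark exactly.
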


\begin{example}
To illustrate the bounds in Theorems~\ref{thm.gabor} and~\ref{thm.chirp}, we consider the examples of an Alltop Gabor frame and a chirp frame, each with $M=5$.
In this case, the Gabor frame has $\smash{\nu_F\approx0.1348\leq0.1667\approx\frac{1}{M+1}}$, while the chirp frame has $\smash{\nu_H=\frac{1}{6}\leq\frac{1}{5}=\frac{\mu_H}{\sqrt{M}}}$.
Note the Gabor and chirp frames have different average coherences despite being equivalent in some sense.
For the random Steinhaus Gabor frame, we ran simulations in MATLAB and observed $30$ realizations for each $M=60,70,80$.
The distributions of $\nu_G$ and $\mu_G$ were rather tight, so we only report the ranges of values attained, along with the bounds given in Theorem~\ref{thm.gabor}:
\begin{equation*}
\begin{array}{rrcll}
M=60:   &  \qquad\nu_G & \in & [0.3916,0.5958]\times10^{-2}  & \qquad\leq0.0164 \\
        &  \qquad\mu_G & \in & [0.3242,0.4216]                & \qquad\leq0.9419 \\
\\
M=70:   &  \qquad\nu_G & \in & [0.3151,0.4532]\times10^{-2}  & \qquad\leq0.0141 \\
        &  \qquad\mu_G & \in & [0.2989,0.3814]                & \qquad\leq0.8883 \\
\\
M=80:   &  \qquad\nu_G & \in & [0.2413,0.3758]\times10^{-2}  & \qquad\leq0.0124 \\
        &  \qquad\mu_G & \in & [0.2711,0.3796]                & \qquad\leq0.8439 
\end{array}
\end{equation*}
These simulations seem to indicate that bound on $\nu_G$ is conservative by an order of magnitude.
\end{example}

\subsection{Spherical 2-designs}

Lemma~\ref{lem.sufficient conditions}(ii) leads one to consider frames of vectors that sum to zero.
In \cite{HP03}, it is proved that real unit norm tight frames with this property make up another well-studied class of vector packings: spherical 2-designs.  To be clear, a collection of unit-norm vectors $F\subseteq\mathbb{R}^M$ is called a spherical $t$-design if, for every polynomial $g(x_1,\ldots,x_M)$ of degree at most $t$, we have
\begin{equation*}
\tfrac{1}{\mathrm{H}^{M-1}(S^{M-1})}\int_{S^{M-1}}g(x)~\mathrm{d}\mathrm{H}^{M-1}(x)=\tfrac{1}{|F|}\sum_{f\in F}g(f),
\end{equation*}
where $S^{M-1}$ is the unit hypersphere in $\mathbb{R}^M$ and $\mathrm{H}^{M-1}$ denotes the $(M-1)$-dimensional Hausdorff measure on $S^{M-1}$.
In words, vectors that form a spherical $t$-design serve as good representatives when calculating the average value of a degree-$t$ polynomial over the unit hypersphere.
Today, such designs find application in quantum state estimation \cite{HHH05}.

Since real unit norm tight frames always exist for $N\geq M+1$, one might suspect that spherical 2-designs are equally common, but this intuition is faulty---the sum-to-zero condition introduces certain issues.
For example, there is no spherical 2-design when $M$ is odd and $N=M+2$.
In \cite{M90}, spherical 2-designs are explicitly characterized by construction.
The following theorem gives a construction based on harmonic frames:

\begin{theorem}[Geometry of spherical 2-designs]
\label{thm.spherical 2-designs}
Pick $M$ even and $N\geq2M$.
Take an $\frac{M}{2}\times N$ harmonic frame $G$ by collecting rows from a discrete Fourier transform matrix according to a set of nonzero indices $\mathcal{M}$ and normalize the columns.
Let $m(n)$ denote $n$th largest index in $\mathcal{M}$, and define a real $M\times N$ frame $F$ by
\begin{equation*}
F_{k\ell}:=\left\{ \begin{array}{ll}\sqrt{\frac{2}{M}}\cos(\frac{2\pi m((k+1)/2)\ell}{N}),&k\mbox{ odd}\\\sqrt{\frac{2}{M}}\sin(\frac{2\pi m(k/2)\ell}{N}),&k\mbox{ even}\end{array} \right., \qquad k=1,\ldots,M, ~\ell=0,\ldots,N-1.
\end{equation*}
Then $F$ is unit norm and tight, i.e., $\|F\|_2^2=\frac{N}{M}$, with worst-case coherence $\mu_F\leq\mu_G$ and average coherence $\nu_F\leq\frac{\mu_F}{\sqrt{M}}$.
\end{theorem}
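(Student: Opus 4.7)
My plan rests on the natural $\mathbb{R}$-linear isometry $\iota\colon\mathbb{C}^{M/2}\to\mathbb{R}^M$ sending $z=(z_1,\ldots,z_{M/2})$ to $(\mathrm{Re}\,z_1,\mathrm{Im}\,z_1,\ldots,\mathrm{Re}\,z_{M/2},\mathrm{Im}\,z_{M/2})$. Inspection of the defining formula shows that the $\ell$th column of $F$ is exactly $\iota$ applied to the $\ell$th column of $G$. Since $\iota$ preserves real inner products in the sense that $\langle\iota(z),\iota(w)\rangle_{\mathbb{R}}=\mathrm{Re}\langle z,w\rangle_{\mathbb{C}}$, the unit-norm property of $F$ is inherited from $G$, and
\[
|\langle f_\ell,f_{\ell'}\rangle|=|\mathrm{Re}\langle g_\ell,g_{\ell'}\rangle|\leq|\langle g_\ell,g_{\ell'}\rangle|
\]
for every $\ell\neq\ell'$, whence $\mu_F\leq\mu_G$.

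For tightness, I would compute $FF^*$ blockwise, grouping rows into consecutive cosine/sine pairs. Each entry reduces, via the product-to-sum identities, to sums of the form $\sum_{\ell=0}^{N-1}\mathrm{e}^{2\pi\mathrm{i}j\ell/N}$, which vanish unless $j\equiv0\pmod N$. The off-diagonal blocks introduce frequencies $\pm m(n)\pm m(n')$, and the diagonals introduce the frequency $2m(n)$. Under the standard ``no-symmetry'' hypothesis on $\mathcal{M}$ (e.g.\ $\mathcal{M}\subseteq\{1,\ldots,\lfloor(N-1)/2\rfloor\}$, the setting of \cite{HP03,M90}), all the relevant shifted frequencies are nonzero modulo $N$, every off-diagonal entry of $FF^*$ vanishes, and each diagonal entry collapses to $N/M$; hence $FF^*=(N/M)\mathrm{I}_M$ and $\|F\|_2^2=N/M$.

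For the average-coherence bound, I would invoke Lemma~\ref{lem.sufficient conditions}(ii). Because each $m(n)$ is nonzero modulo $N$, the exponential sum $\sum_{\ell=0}^{N-1}\mathrm{e}^{2\pi\mathrm{i}m(n)\ell/N}$ vanishes, so every row of $G$ sums to zero; taking real and imaginary parts, every row of $F$ does too, and therefore $\sum_{\ell=0}^{N-1}f_\ell=0$. Combined with the hypothesis $N\geq2M$, Lemma~\ref{lem.sufficient conditions}(ii) immediately delivers $\nu_F\leq\mu_F/\sqrt{M}$.

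The principal obstacle is the tightness computation: beyond routine trigonometric bookkeeping, it hinges on identifying precisely the frequency subsets $\mathcal{M}$ for which the real/imaginary-part construction is tight, a point on which the statement is implicit. Everything else falls out cleanly from the complex-to-real identification $\iota$ together with Lemma~\ref{lem.sufficient conditions}.
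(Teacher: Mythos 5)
Your proposal is correct and follows essentially the same route as the paper's proof: the identity $\langle f_i,f_j\rangle=\mathrm{Re}\langle g_i,g_j\rangle$ for unit-normness and the coherence bound $\mu_F\leq\mu_G$, the geometric sum formula for tightness, and Lemma~\ref{lem.sufficient conditions}(ii) via the sum-to-zero property for the average coherence. Your observation that tightness implicitly requires $\mathcal{M}$ to contain no index $m$ with $2m\equiv 0\pmod{N}$ and no pair of indices summing to $0\pmod{N}$ is a legitimate point that the paper's ``easy to verify'' glosses over (and its worked example with $N=37$ happens to satisfy this condition even though it is not contained in $\{1,\ldots,\lfloor(N-1)/2\rfloor\}$).
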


\begin{proof}
It is easy to verify that $F$ is a unit norm tight frame using the geometric sum formula.
Also, since the frame elements sum to zero and $N\geq 2M$, the claim regarding average coherence follows from Lemma~\ref{lem.sufficient conditions}(ii).
It remains to prove $\mu_F\leq\mu_G$.  For each distinct pair of indices $i,j\in\{1,\ldots,N\}$, we have
\begin{equation*}
\langle f_i,f_j\rangle
=\tfrac{2}{M}\sum_{m\in\mathcal{M}}\Big(\cos(\tfrac{2\pi mi}{N})\cos(\tfrac{2\pi mj}{N})+\sin(\tfrac{2\pi mi}{N})\sin(\tfrac{2\pi mj}{N})\Big)
=\tfrac{2}{M}\sum_{m\in\mathcal{M}}\cos(\tfrac{2\pi m(i-j)}{N})
=\mathrm{Re}\langle g_i,g_j\rangle,
\end{equation*}
and so $|\langle f_i,f_j\rangle|=|\mathrm{Re}\langle g_i,g_j\rangle|\leq|\langle g_i,g_j\rangle|$.
This gives the result.
\end{proof}

\begin{example}
To illustrate the bounds in Theorem~\ref{thm.spherical 2-designs}, we consider the spherical 2-design constructed from a $9\times 37$ harmonic equiangular tight frame \cite{XZG05}.
Specifically, we take a $37\times 37$ DFT matrix, choose nonzero row indices
\begin{equation*}
\mathcal{M}=\{1,7,9,10,12,16,26,33,34\},
\end{equation*}
and normalize the columns to get a harmonic frame $G$ whose worst-case coherence achieves the Welch bound: $\smash{\mu_G=\sqrt{\frac{37-9}{9(37-1)}}\approx0.2940}$.
Following Theorem~\ref{thm.spherical 2-designs}, we produce a spherical 2-design $F$ with $\mu_F\approx0.1967\leq\mu_G$ and $\smash{\nu_F\approx0.0278\leq0.0464\approx\frac{\mu_F}{\sqrt{M}}}$.
\end{example}

\subsection{Steiner equiangular tight frames}

We now consider a construction that dates back to Seidel with \cite{S73}, and was recently developed further in \cite{FMT10}.
Here, a special type of block design is used to build an equiangular tight frame (ETF), that is, a tight frame in which the modulus of every inner product between frame elements achieves the Welch bound.
Let's start with a definition:

\begin{definition}
A $(t,k,v)$-\emph{Steiner system} is a $v$-element set $V$ with a collection of $k$-element subsets of $V$, called \emph{blocks}, with the property that any $t$-element subset of $V$ is contained in exactly one block.
The $\{0,1\}$-\emph{incidence matrix} $A$ of a Steiner system has entries $A_{ij}$, where $A_{ij}=1$ if the $i$th block contains the $j$th element, and otherwise $A_{ij}=0$.
\end{definition}

One example of a Steiner system is a set with all possible two-element blocks.
This forms a $(2,2,v)$-Steiner system because every pair of elements is contained in exactly one block.
The following theorem details how \cite{FMT10} constructs ETFs using Steiner systems.

\begin{theorem}[Constructing Steiner equiangular tight frames \cite{FMT10}]
\label{thm.steiner etfs}
Every $(2,k,v)$-Steiner system can be used to build a $\smash{\frac{v(v-1)}{k(k-1)}\times v(1+\frac{v-1}{k-1})}$ equiangular tight frame $F$ according the following procedure:
\begin{enumerate}
\item[(i)] Let $A$ be the $\frac{v(v-1)}{k(k-1)}\times v$ incidence matrix of a $(2,k,v)$-Steiner system.
\item[(ii)] Let $H$ be the $(1+\frac{v-1}{k-1})\times(1+\frac{v-1}{k-1})$ discrete Fourier transform matrix.
\item[(iii)] For each $j=1,\ldots,v$, let $F_j$ be a $\frac{v(v-1)}{k(k-1)}\times(1+\frac{v-1}{k-1})$ matrix obtained from the $j$th column of $A$ by replacing each of the one-valued entries with a distinct row of $H$, and every zero-valued entry with a row of zeros.
\item[(iv)] Concatenate and rescale the $F_j$'s to form $F=(\frac{k-1}{v-1})^\frac{1}{2}[F_1\cdots F_v]$.
\end{enumerate}
\end{theorem}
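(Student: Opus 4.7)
The plan is to verify three properties in turn: $F$ has unit-norm columns, $FF^{*}$ is a scalar multiple of the identity, and $|\langle f_i,f_j\rangle|$ takes the same value for every pair of distinct columns. Once these are established, unit norm tightness together with a constant off-diagonal magnitude in the Gram matrix forces that magnitude to equal the Welch bound (via the tight-frame identity $\mathrm{tr}((F^{*}F)^{2})=N^{2}/M$, which pins down $\sum_{i\neq j}|\langle f_i,f_j\rangle|^{2}=N(N-M)/M$), so $F$ is an ETF of the advertised dimensions.

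Throughout I would set $r:=\tfrac{v-1}{k-1}$, so that $H$ is $(r+1)\times(r+1)$ and every $F_j$ has $r+1$ columns. The backbone will be two standard counting facts for $(2,k,v)$-Steiner systems: every element of $V$ lies in exactly $r$ blocks, and every pair of distinct elements lies in exactly one common block. Read off of $A$, this says every column of $A$ has exactly $r$ ones, every row has exactly $k$ ones, and any two columns share a one in exactly one row.

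Unit norm is immediate: a column of $F_j$ is an $r$-sparse vector whose nonzero entries come from a column of $H$ and hence have unit modulus, so its squared norm is $r$, and the global scaling $\sqrt{(k-1)/(v-1)}=1/\sqrt{r}$ rescales it to $1$. Tightness reduces to computing $\sum_{j=1}^{v}F_jF_j^{*}$: the $(m,m')$ entry of $F_jF_j^{*}$ is the inner product of the $m$th and $m'$th rows of $F_j$, and because step (iii) assigns \emph{distinct} rows of $H$ to distinct $1$-positions, off-diagonal entries vanish by orthogonality of distinct rows of $H$, while each diagonal entry at a $1$-position equals $r+1$. Summing over $j$ and using that every row of $A$ has $k$ ones yields $\sum_j F_jF_j^{*}=k(r+1)\,\mathrm{I}_M$, whence $FF^{*}=\tfrac{k(k+v-2)}{v-1}\,\mathrm{I}_M=\tfrac{N}{M}\,\mathrm{I}_M$ after the global rescaling.

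For equiangularity I would split into a within-block and a cross-block case. Within a single $F_j$, the inner product of columns $\ell\neq\ell'$ is $\sum_{p\in S}H_{p\ell}\overline{H_{p\ell'}}$ over the set $S$ of $r$ rows of $H$ assigned to the $1$-positions of column $j$; since exactly one row $p_{0}$ is missing from $S$ and the full sum over all $r+1$ rows of $H$ vanishes, this equals $-H_{p_{0}\ell}\overline{H_{p_{0}\ell'}}$, of modulus $1$. Across two blocks $F_j$ and $F_{j'}$, only rows $m$ with $A_{mj}=A_{mj'}=1$ contribute, and the Steiner property gives exactly one such $m$, so the inner product is a single unimodular term. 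Either way, after rescaling, $|\langle f_i,f_j\rangle|=\tfrac{k-1}{v-1}$. The only delicate point is that the row-of-$H$ assignments in step (iii) may differ between $F_j$ and $F_{j'}$, so one must check that the cross-block magnitude is insensitive to those choices; it is, because the lone surviving term is $H_{p\ell}\overline{H_{p'\ell'}}$ with $|H_{p\ell}|=|H_{p'\ell'}|=1$ regardless of $p,p'$. The rest is routine arithmetic with the Steiner parameters.
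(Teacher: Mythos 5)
Your proof is correct. The paper itself offers no proof of this theorem---it is quoted directly from \cite{FMT10} and only illustrated with the $3\times 9$ example---and your verification (unit norm from the $r=\frac{v-1}{k-1}$ unimodular entries per column, tightness from row-orthogonality of $H$ plus the constant row sum $k$ of $A$, and the within-block/cross-block case split for equiangularity) is essentially the argument given in that reference. All the counting checks out, including the identity $kr+k-r=v+k-1$ that makes the common inner-product modulus $\frac{k-1}{v-1}$ coincide with the Welch bound.
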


As an example, we build an ETF from a (2,2,3)-Steiner system.
In this case, the incidence matrix is
\begin{equation*}
A=\left[\begin{array}{ccc}+&+&\\+&&+\\&+&+\end{array}\right].
\end{equation*}
For this matrix, each row represents a block.
Since each block contains two elements, each row of the matrix has two ones.
Also, any two elements determines a unique common row, and so any two columns have a single one in common.
To form the corresponding $3\times 9$ ETF $F$, we use the $3\times 3$ DFT matrix.
Letting $\omega=\mathrm{e}^{2\pi\mathrm{i}/3}$, we have
\begin{equation*}
H=\left[\begin{array}{lll}1&1&1\\1&\omega&\omega^2\\1&\omega^2&\omega\end{array}\right].
\end{equation*}
Finally, we replace the two ones in each column of $A$ with the second and third rows of $H$.
Normalizing the columns gives $3\times 9$ ETF:
\begin{equation}
\label{eq.steiner etf example} F=\tfrac{1}{\sqrt{2}}\left[\begin{array}{lllllllll}1&\omega&\omega^2&1&\omega&\omega^2&&&\\1&\omega^2&\omega&&&&1&\omega&\omega^2\\&&&1&\omega^2&\omega&1&\omega^2&\omega\end{array}\right].
\end{equation}

Several infinite families of $(2,k,v)$-Steiner systems are already known, and Theorem~\ref{thm.steiner etfs} says that each one can be used to build an ETF.
See \cite{FMT10} for a complete discussion of this construction and how it relates to each known family of Steiner systems.
Interestingly, every Steiner ETF satisfies $N\geq2M$.
If, in step (iii) of Theorem~\ref{thm.steiner etfs}, we choose the distinct rows to be the $\frac{v-1}{k-1}$ rows of the DFT $H$ that are not all-ones, then the sum of columns of each $F_j$ is zero, meaning the sum of columns of $F$ is also zero.
This was done in the example above, and the columns sum to zero, accordingly.
Therefore, by Lemma~\ref{lem.sufficient conditions}(ii), Steiner ETFs satisfy (SCP-2).
This gives the following theorem:

\begin{theorem}[Geometry of Steiner equiangular tight frames]
\label{thm.steiner etf coherence}
Build an $M\times N$ matrix $F$ according to Theorem~\ref{thm.steiner etfs}, and in step (iii), choose rows from the discrete Fourier transform matrix $H$ that are not all-ones.
Then $F$ is an equiangular tight frame, meaning $\|F\|_2^2=\frac{N}{M}$ and $\mu_F^2=\frac{N-M}{M(N-1)}$, and has average coherence $\nu_F\leq\frac{\mu_F}{\sqrt{M}}$.
\end{theorem}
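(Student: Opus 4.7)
The proof factors cleanly into two largely independent pieces. First, the claim that $F$ is an equiangular tight frame is immediate from Theorem~\ref{thm.steiner etfs}. From this, $\|F\|_2^2 = N/M$ is exactly the tight-frame condition, and $\mu_F^2 = (N-M)/(M(N-1))$ follows because ``equiangular tight frame'' is defined to mean a unit norm tight frame in which all off-diagonal inner products have the same modulus, and a standard computation with the frame operator shows that this common modulus must saturate the Welch bound.

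The second piece, the average coherence bound, is a direct application of Lemma~\ref{lem.sufficient conditions}(ii), whose hypotheses are $N \geq 2M$ and $\sum_n f_n = 0$. The dimension inequality reduces to substituting $M = v(v-1)/(k(k-1))$ and $N = v(v+k-2)/(k-1)$ into $N - 2M$ and verifying nonnegativity by a brief algebraic manipulation valid for any $(2,k,v)$-Steiner parameters with $k \geq 2$ and $v \geq k$. The zero-sum condition is where the particular choice of DFT rows in step (iii) of Theorem~\ref{thm.steiner etfs} does its work: for each block $F_j$, the $i$th row is either identically zero (when $A_{ij}=0$) or one of the chosen rows of $H$ (when $A_{ij}=1$). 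By hypothesis the chosen rows are the non-all-ones rows of the DFT, whose entries sum to zero by the geometric sum formula for nontrivial roots of unity. Either way, every row of $F_j$ has vanishing entry sum, so the columns of each $F_j$ sum to the zero vector, and hence so do the columns of the concatenation $F$. Lemma~\ref{lem.sufficient conditions}(ii) then delivers $\nu_F \leq \mu_F/\sqrt{M}$.

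There is no substantive obstacle here; the proof is essentially bookkeeping combined with two previously cited results. The only point requiring any care is the elementary verification that every $(2,k,v)$-Steiner design satisfies $N \geq 2M$, together with the observation that the non-all-ones rows of the DFT are exactly the rows whose entries sum to zero.
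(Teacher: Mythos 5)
Your proof is correct and follows essentially the same route as the paper: the ETF property (and hence $\|F\|_2^2=\frac{N}{M}$ and the Welch-bound value of $\mu_F$) is read off from Theorem~\ref{thm.steiner etfs}, and the average coherence bound comes from verifying $N\geq 2M$ together with the zero-sum condition (each non-all-ones DFT row sums to zero, so the columns of every $F_j$, and hence of $F$, sum to zero) and then invoking Lemma~\ref{lem.sufficient conditions}(ii). You even supply the short algebraic verification that every $(2,k,v)$-Steiner ETF satisfies $N\geq 2M$, a fact the paper only asserts.
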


\begin{example}
To illustrate the bound in Theorem~\ref{thm.steiner etf coherence}, we note that the example given in \eqref{eq.steiner etf example} has $\smash{\nu_F=\frac{1}{8}\leq\frac{1}{2\sqrt{3}}=\frac{\mu_F}{\sqrt{M}}}$.
\end{example}

\subsection{Code-based frames}

Many structures in coding theory are also useful in frame theory.
In this section, we build frames from a code that originally emerged with Berlekamp in \cite{B70}, and found recent reincarnation with \cite{YG06}.
We build a $2^m\times2^{(t+1)m}$ frame, indexing rows by elements of $\mathbb{F}_{2^m}$ and indexing columns by $(t+1)$-tuples of elements from $\mathbb{F}_{2^m}$.
For $x\in\mathbb{F}_{2^m}$ and $\alpha\in\mathbb{F}_{2^m}^{t+1}$, the corresponding entry of the matrix $F$ is given by
\begin{equation}
\label{eq.matrix defn}
F_{x\alpha}=\tfrac{1}{\sqrt{2^{m}}}(-1)^{\mathrm{Tr}\big[\alpha_0x+\sum_{i=1}^t\alpha_ix^{2^i+1}\big]},
\end{equation}
where $\mathrm{Tr}:\mathbb{F}_{2^m}\rightarrow\mathbb{F}_2$ denotes the trace map, defined by $\mathrm{Tr}(z)=\sum_{i=0}^{m-1}z^{2^i}$.
The following theorem gives the spectral norm and the worst-case and average coherence of this frame.

\begin{table}
\begin{center}
\begin{scriptsize}
\begin{tabular}{lllllll}
\hline
Name &$\mathbb{R}/\mathbb{C}$	&Size &$\mu_F$ &$\nu_F$ &Restrictions &Probability\\
\hline
Normalized Gaussian &$\mathbb{R}$ &$M\times N$ &$\leq \frac{\sqrt{15\log{N}}}{\sqrt{M} - \sqrt{12\log{N}}}$ &$\leq \frac{\sqrt{15\log{N}}}{M - \sqrt{12M\log{N}}}$ &$60\log N\leq M\leq\frac{N-1}{4\log N}$ &$\geq1-\frac{11}{N}$\\

Random harmonic &$\mathbb{C}$ &$|\mathcal{M}|\times N$, $\frac{1}{2}M\leq|\mathcal{M}|\leq\frac{3}{2}M$ &$\leq \sqrt{\frac{118(N-M)\log{N}}{MN}}$ &$\leq\frac{\mu_F}{\sqrt{|\mathcal{M}|}}$ &$16\log{N}\leq M\leq \frac{N}{3}$ &$\geq 1 - \frac{4}{N}-\frac{1}{N^2}$\\

Alltop Gabor &$\mathbb{C}$ &$M\times M^2$ &$=\frac{1}{\sqrt{M}}$ &$\leq\frac{1}{M+1}$ &$M\geq 5$ prime &Deterministic\\

Steinhaus Gabor &$\mathbb{C}$ &$M\times M^2$ &$\leq\sqrt{\frac{13\log M}{M}}$ &$\leq\frac{1}{M+1}$ &$M\geq13$ &$\geq1-\frac{4}{M}$\\

Chirp &$\mathbb{C}$ &$M\times M^2$ &$=\frac{1}{\sqrt{M}}$ &$\leq\frac{\mu_F}{\sqrt{M}}$ &$M$ prime &Deterministic\\

$\overset{\mbox{Spherical 2-design}}{\mbox{from harmonic }G}$ &$\mathbb{R}$ &$M\times N$ &$\leq\mu_G$ &$\leq\frac{\mu_F}{\sqrt{M}}$ &$M$ even, $N\geq 2M$ &Deterministic\\

Steiner &$\mathbb{C}$ &$M\times N$, $M=\frac{v(v-1)}{k(k-1)}$, $N=v(1+\frac{v-1}{k-1})$ &$=\sqrt{\frac{N-M}{M(N-1)}}$ &$\leq\frac{\mu_F}{\sqrt{M}}$ &$\exists(2,k,v)$-Steiner system &Deterministic\\

Code-based &$\mathbb{R}$ &$2^m\times 2^{(t+1)m}$ &$\leq\frac{1}{\sqrt{2^{m-2t-1}}}$ &$\leq\frac{\mu_F}{\sqrt{2^m}}$ &None &Deterministic\\

\hline
\end{tabular}
\end{scriptsize}
\caption{\label{table.constructions}
Eight constructions detailed in this paper.
All of these are unit norm tight frames except for the normalized Gaussian frame, which has squared spectral norm $\|F\|_2^2\leq(\!\sqrt{M}+\!\sqrt{N}+\!\sqrt{2\log{N}})^2/(M-\!\sqrt{8M\log{N}})$ in the same probability event as is measured above.
}
\end{center}
\end{table}

\begin{theorem}[Geometry of code-based frames]
\label{thm.code-based coherence}
The $2^m\times2^{(t+1)m}$ frame defined by \eqref{eq.matrix defn} is unit norm and tight, i.e., $\|F\|_2^2=2^{tm}$, with worst-case coherence $\mu_F\leq \frac{1}{\sqrt{2^{m-2t-1}}}$ and average coherence $\smash{\nu_F\leq\frac{\mu_F}{\sqrt{2^{m}}}}$.
\end{theorem}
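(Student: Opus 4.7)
The plan is to dispatch the unit-norm, tightness, and average-coherence claims quickly via orthogonality of additive characters on $\mathbb{F}_{2^m}$, and then to tackle the worst-case coherence bound, which requires controlling a character sum of a quadratic form. Every entry of $F$ has modulus $2^{-m/2}$, so each column has unit norm. For tightness, the $(x,y)$-entry of $FF^*$ factors over the $\alpha_i$'s into products of sums of the form $\sum_{\alpha_0\in\mathbb{F}_{2^m}}(-1)^{\mathrm{Tr}(\alpha_0(x+y))}$, which by nondegeneracy of the trace equals $2^m$ if $x=y$ and $0$ otherwise; this forces $FF^*=2^{tm}\mathrm{I}_{2^m}$. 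The same character-orthogonality shows that the vector $\sum_\beta f_\beta$ is supported only at the row $x=0$, with value $2^{tm+m/2}$ there; since $F_{0,\alpha}=2^{-m/2}$ for every column index $\alpha$, one obtains $\langle f_\alpha,\sum_\beta f_\beta\rangle=2^{tm}=N/M$ for every $\alpha$, and Lemma~\ref{lem.sufficient conditions}(i) delivers the average-coherence bound $\nu_F\leq\mu_F/\sqrt{2^m}$.

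For the worst-case coherence, setting $\gamma_i:=\alpha_i+\beta_i$ for distinct indices $\alpha\neq\beta$,
\begin{equation*}
\langle f_\alpha,f_\beta\rangle = \tfrac{1}{2^m}\sum_{x\in\mathbb{F}_{2^m}}(-1)^{Q(x)},\qquad Q(x):=\mathrm{Tr}\Big[\gamma_0 x + \sum_{i=1}^t\gamma_i x^{2^i+1}\Big].
\end{equation*}
If $\gamma_i=0$ for all $i\geq 1$, then $Q$ is a nontrivial $\mathbb{F}_2$-linear form on $\mathbb{F}_{2^m}$ and the sum vanishes. Otherwise $Q$ is a genuine $\mathbb{F}_2$-quadratic form on $\mathbb{F}_{2^m}\cong\mathbb{F}_2^m$, and the standard Gauss-sum evaluation for such forms yields $\big|\sum_x(-1)^{Q(x)}\big|\in\{0,\,2^{m-r/2}\}$, where $r$ is the rank of the alternating bilinear form $B(x,y):=Q(x+y)+Q(x)+Q(y)$. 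The problem thus reduces to lower-bounding $r$, equivalently upper-bounding $\dim\ker L$, where $L:\mathbb{F}_{2^m}\to\mathbb{F}_{2^m}$ is the $\mathbb{F}_2$-linear map defined by $B(x,y)=\mathrm{Tr}(x\,L(y))$.

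A direct expansion using $(x+y)^{2^i+1}+x^{2^i+1}+y^{2^i+1}=xy^{2^i}+x^{2^i}y$ together with the Frobenius identity $\mathrm{Tr}(z)=\mathrm{Tr}(z^{2^{m-i}})$ gives
\begin{equation*}
L(y)=\sum_{i=1}^t\Big(\gamma_i\,y^{2^i}+\gamma_i^{2^{m-i}}\,y^{2^{m-i}}\Big).
\end{equation*}
The key trick is to raise $L$ to the $2^t$-th power in characteristic two: every exponent of $y$ gets multiplied by $2^t$, and reducing modulo $y^{2^m}-y$ then places every surviving exponent inside $\{2^0,\ldots,2^{t-1}\}\cup\{2^{t+1},\ldots,2^{2t}\}$, producing a polynomial of degree at most $2^{2t}$. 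Since $L$ is a nonzero polynomial of degree at most $2^{m-1}<2^m$ whenever some $\gamma_i\neq 0$ with $i\geq 1$, it is nonzero as a function on $\mathbb{F}_{2^m}$, so $L^{2^t}$ is also nonzero as a function and its reduced polynomial is not the zero polynomial. Therefore $L$ has at most $2^{2t}$ zeros in $\mathbb{F}_{2^m}$, giving $\dim\ker L\leq 2t$ and $r\geq m-2t$; substituting into the Gauss-sum bound yields $|\langle f_\alpha,f_\beta\rangle|\leq 2^{-m}\cdot 2^{(m+2t)/2}=1/\sqrt{2^{m-2t}}\leq 1/\sqrt{2^{m-2t-1}}$.

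The hardest step will be the algebraic manipulation in the last paragraph: correctly identifying the linearized polynomial $L$ from the bilinear form, and then observing the trick that $L^{2^t}$ becomes a polynomial of degree at most $2^{2t}$ once reduced modulo $y^{2^m}-y$. Everything else is routine bookkeeping with additive characters on $\mathbb{F}_{2^m}$ and a final appeal to Lemma~\ref{lem.sufficient conditions}.
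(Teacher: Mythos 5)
Your proof is correct. The unit-norm, tightness, and average-coherence arguments coincide with the paper's: character orthogonality over $\mathbb{F}_{2^m}$ shows distinct rows are orthogonal and that the frame elements sum to $2^{(t+1/2)m}\delta_0$, after which Lemma~\ref{lem.sufficient conditions}(i) gives $\nu_F\leq\mu_F/\sqrt{2^m}$ exactly as in the paper. For the worst-case coherence you take a genuinely different route. The paper squares the character sum, substitutes the symmetric functions $u=x+y$, $v=xy$, and bounds the number of $u$ for which the inner sum over $v$ survives by counting roots of $p(u)=0$ and $u^2p(u)=1$, each controlled by raising to the power $2^{t-1}$. You instead recognize the exponent as an $\mathbb{F}_2$-valued quadratic form $Q$, invoke the standard evaluation $\big|\sum_x(-1)^{Q(x)}\big|\in\{0,2^{m-r/2}\}$ with $r$ the rank of the polarization $B$, and bound the corank by showing the linearized polynomial $L$ representing $B$ has at most $2^{2t}$ roots---via essentially the same Frobenius-exponentiation trick the paper applies to $p$. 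Both arguments are sound; yours buys a more conceptual reduction (to the radical of $B$) and a slightly sharper constant, $\mu_F\leq 2^{-(m-2t)/2}$ rather than $2^{-(m-2t-1)/2}$, which is attained with equality in the paper's $m=4$, $t=1$ example. One small point worth recording explicitly: both the distinctness of the exponents $2^i$ and $2^{m-j}$ in $L$ (so that $L$ is a nonzero polynomial when some $\gamma_i\neq 0$, $i\geq 1$) and the claim that the reduction of $L^{2^t}$ modulo $y^{2^m}-y$ has degree at most $2^{2t}<2^m$ use $2t<m$; this costs nothing, since for $m\leq 2t+1$ the stated coherence bound is at least $1$ and hence vacuous, but the caveat should be stated.
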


\begin{proof}
For the tightness claim, we use the linearity of the trace map to write the inner product of rows $x$ and $y$:
\begin{equation*} \sum_{\alpha\in\mathbb{F}_{2^m}^{t+1}}\!\!\tfrac{1}{\sqrt{2^{m}}}(-1)^{\mathrm{Tr}\big[\alpha_0x+\sum_{i=1}^t\alpha_ix^{2^i+1}\big]}\tfrac{1}{\sqrt{2^{m}}}(-1)^{\mathrm{Tr}\big[\alpha_0y+\sum_{i=1}^t\alpha_iy^{2^i+1}\big]}=\tfrac{1}{2^m}\bigg(\!\sum_{\alpha_0\in\mathbb{F}_{2^m}}(-1)^{\mathrm{Tr}[\alpha_0(x+y)]}\bigg)\!\!\sum_{\alpha_1\in\mathbb{F}_{2^m}}\!\!\cdots\!\!\sum_{\alpha_t\in\mathbb{F}_{2^m}}\!\!(-1)^{\mathrm{Tr}\big[\sum_{i=1}^t\alpha_i(x^{2^i+1}+y^{2^i+1})\big]}.
\end{equation*}
This expression is $2^{tm}$ when $x=y$.  
Otherwise, note that $\alpha_0\mapsto(-1)^{\mathrm{Tr}[\alpha_0(x+y)]}\in\{\pm1\}$ defines a homomorphism on $\mathbb{F}_{2^m}$.
Since $(x+y)^{-1}\mapsto-1$, the inverse images of $\pm1$ under this homomorphism must form two cosets of equal size, and so $\sum_{\alpha_0\in\mathbb{F}_{2^m}}(-1)^{\mathrm{Tr}[\alpha_0(x+y)]}=0$, meaning distinct rows in $F$ are orthogonal.  
Thus, $F$ is a unit norm tight frame.

For the worst-case coherence claim, we first note that the linearity of the trace map gives
\begin{equation*} (-1)^{\mathrm{Tr}\big[\alpha_0x+\sum_{i=1}^t\alpha_ix^{2^i+1}\big]}(-1)^{\mathrm{Tr}\big[\alpha'_0x+\sum_{i=1}^t\alpha'_ix^{2^i+1}\big]}=(-1)^{\mathrm{Tr}\big[(\alpha_0+\alpha'_0)x+\sum_{i=1}^t(\alpha_i+\alpha'_i)x^{2^i+1}\big]},
\end{equation*}
i.e., every inner product between columns of $F$ is a sum over another column.
Thus, there exists $\alpha\in\mathbb{F}_{2^m}^{t+1}$ such that
\begin{equation*}
2^{2m}\mu_F^2
=\bigg(\sum_{x\in\mathbb{F}_{2^m}}(-1)^{\mathrm{Tr}\big[\alpha_0x+\sum_{i=1}^t\alpha_ix^{2^i+1}\big]}\bigg)^2
=2^m+\sum_{x\in\mathbb{F}_{2^m}}\sum_{\substack{y\in\mathbb{F}_{2^m}\\y\neq x}}(-1)^{\mathrm{Tr}\big[\alpha_0(x+y)+\sum_{i=1}^t\alpha_i\big((x+y)^{2^i+1}+\sum_{j=0}^{i-1}(xy)^{2^j}(x+y)^{2^i-2^{j+1}+1}\big)\big]},
\end{equation*}
where the last equality is by the identity
$(x+y)^{2^i+1}=x^{2^i+1}+y^{2^i+1}+\sum_{j=0}^{i-1}(xy)^{2^j}(x+y)^{2^i-2^{j+1}+1}$, whose proof is a simple exercise of induction.
From here, we perform a change of variables: $u:= x+y$ and $v:= xy$.
Notice that $(u,v)$ corresponds to $(x,y)$ for some $x\neq y$ whenever $(z+x)(z+y)=z^2+uz+v$ has two solutions, that is, whenever $\smash{\mathrm{Tr}(\frac{v}{u^2})=0}$.
Since $(u,v)$ corresponds to both $(x,y)$ and $(y,x)$, we must correct for under-counting:
\begin{align}
2^{2m}\mu_F^2
\nonumber
&=2^m+2\sum_{\substack{u\in\mathbb{F}_{2^m}\\u\neq0}} \sum_{\substack{v\in\mathbb{F}_{2^m}\\\mathrm{Tr}(v/u^2)=0}}(-1)^{\mathrm{Tr}\big[\alpha_0u+\sum_{i=1}^t\alpha_i\big(u^{2^i+1}+\sum_{j=0}^{i-1}v^{2^j}u^{2^i-2^{j+1}+1}\big)\big]}\\
\nonumber
&=2^m+2\sum_{\substack{u\in\mathbb{F}_{2^m}\\u\neq0}}(-1)^{\mathrm{Tr}\big[\alpha_0u+\sum_{i=1}^t\alpha_iu^{2^i+1}\big]}\sum_{\substack{v\in\mathbb{F}_{2^m}\\\mathrm{Tr}(v/u^2)=0}
}(-1)^{\mathrm{Tr}\big[\big(\sum_{i=1}^t\sum_{j=0}^{i-1}\alpha_i^{2^{-j}}u^{2^{i-j}-2+2^{-j}}\big)v\big]}\\
\label{eq.big}
&\leq2^m+2\sum_{\substack{u\in\mathbb{F}_{2^m}\\u\neq0}}~\bigg|\!\!\!\sum_{\substack{v\in\mathbb{F}_{2^m}
\\\mathrm{Tr}(v/u^2)=0}}\!\!\!(-1)^{\mathrm{Tr}[p(u)v]}~\bigg|,
\end{align}
where the second equality is by repeated application of $\mathrm{Tr}(z)=\mathrm{Tr}(z^2)$, and $\smash{p(u):=\sum_{i=1}^t\sum_{j=0}^{i-1}\alpha_i^{2^{-j}}u^{2^{i-j}-2+2^{-j}}}$.
To bound $\mu_F$, we will count the $u$'s that produce nonzero summands in \eqref{eq.big}.

For each $u\neq0,$ we have a homomorphism $\smash{\chi_u:\{v\in\mathbb{F}_{2^m}:\mathrm{Tr}(\frac{v}{u^2})=0\}\rightarrow\{\pm1\}}$ defined by
$\chi_u(v):=(-1)^{\mathrm{Tr}[p(u)v]}$.
Pick $u\neq0$ for which there exists a $v$ such that both $\smash{\mathrm{Tr}(\frac{v}{u^2})=0}$ and $\mathrm{Tr}[p(u)v]=1$.
Then $\chi_u(v)=-1$, and so the kernel of $\chi_u$ is the same size as the coset $\smash{\{v\in\mathbb{F}_{2^m}:\mathrm{Tr}(\frac{v}{u^2})=0,\chi_u(v)=-1\}}$, meaning the summand associated with $u$ in \eqref{eq.big} is zero.
Hence, the nonzero summands in \eqref{eq.big} require $\smash{\mathrm{Tr}(\frac{v}{u^2})=0}$ and $\mathrm{Tr}[p(u)v]=0$.
This is certainly possible whenever $p(u)=0$.
Exponentiation gives 
\begin{equation*}
p(u)^{2^{t-1}}=\sum_{i=1}^t\sum_{j=0}^{i-1}\alpha_i^{2^{t-j-1}}u^{2^{t+i-j-1}-2^t+2^{t-j-1}}, 
\end{equation*}
which has degree $2^{2t-1}-2^{t-1}$.
Thus, $p(u)=0$ has at most $2^{2t-1}-2^{t-1}$ solutions, and each such $u$ produces a summand in \eqref{eq.big} of size $2^{m-1}$.
Next, we consider the $u$'s for which $\smash{\mathrm{Tr}(\frac{v}{u^2})=0}$, $\mathrm{Tr}[p(u)v]=0$, and $p(u)\neq0$.
In this case, the hyperplanes defined by $\smash{\mathrm{Tr}(\frac{v}{u^2})=0}$ and $\mathrm{Tr}[p(u)v]=0$ are parallel, and so $\smash{p(u)=\frac{1}{u^2}}$.
Here, 
\begin{equation*}
1=(u^2p(u))^{2^{t-1}}=\sum_{i=1}^t\sum_{j=0}^{i-1}\alpha_i^{2^{t-j-1}}u^{2^{t+i-j-1}+2^{t-j-1}}, 
\end{equation*}
which has degree $2^{2t-1}+2^{t-1}$.
Thus, $\smash{p(u)=\frac{1}{u^2}}$ has at most $2^{2t-1}+2^{t-1}$ solutions, and each such $u$ produces a summand in \eqref{eq.big} of size $2^{m-1}$.
We can now continue the bound from \eqref{eq.big}: $2^{2m}\mu_F^2\leq 2^m+2(2^{2t-1}-2^{t-1}+2^{2t-1}+2^{t-1})2^{m-1}\leq 2^{m+2t+1}$. 
From here, isolating $\mu_F$ gives the claim.

Lastly, for the average coherence, pick some $x\in\mathbb{F}_{2^m}$.
Then summing the entries in the $x$th row gives
\begin{equation*}
\sum_{\alpha\in\mathbb{F}_{2^m}^{t+1}}\tfrac{1}{\sqrt{2^{m}}}(-1)^{\mathrm{Tr}\big[\alpha_0x+\sum_{i=1}^t\alpha_ix^{2^i+1}\big]}
=\tfrac{1}{\sqrt{2^{m}}}\bigg(\sum_{\alpha_0\in\mathbb{F}_{2^m}}(-1)^{\mathrm{Tr}(\alpha_0x)}\bigg)\sum_{\alpha_1\in\mathbb{F}_{2^m}}\cdots\sum_{\alpha_t\in\mathbb{F}_{2^m}}(-1)^{\mathrm{Tr}\big[\sum_{i=1}^t\alpha_ix^{2^i+1}\big]}
=\left\{\begin{array}{lc}2^{(t+1/2)m},&x=0\\0,&x\neq0\end{array}\right. .
\end{equation*}
That is, the frame elements sum to a multiple of an identity basis element: $\smash{\sum_{\alpha\in\mathbb{F}_{2^m}^{t+1}}f_\alpha=2^{(t+1/2)m}\delta_0}$.
Since every entry in row $x=0$ is $\smash{\frac{1}{\sqrt{2^{m}}}}$, we have  
$\smash{\langle f_{\alpha'},\sum_{\alpha\in\mathbb{F}_{2^m}^{t+1}}f_\alpha \rangle=\frac{2^{(t+1)m}}{2^m}}$
for every $\alpha'\in\mathbb{F}_{2^m}^{t+1}$, and so by Lemma~\ref{lem.sufficient conditions}(i), we are done.
\end{proof}

\begin{example}
To illustrate the bounds in Theorem~\ref{thm.code-based coherence}, we consider the example where $m=4$ and $t=1$.
This is a $16\times 256$ code-based frame $F$ with $\smash{\mu_F=\frac{1}{2}\leq\frac{1}{\sqrt{2}}=\frac{1}{\sqrt{2^{m-2t-1}}}}$ and $\smash{\nu_F=\frac{1}{17}\leq\frac{1}{8}=\frac{\mu_F}{\sqrt{2^m}}}$.
\end{example}

\section{Fundamental limits on worst-case coherence}

In many applications of frames, performance is dictated by worst-case coherence \cite{bajwa:jcn10,candes:annstat09,donoho:tit06b,HP03,mixon:icassp11,strohmer:acha03,tropp:tit04,tropp:acha08,zahedi:acc10}. 
It is therefore particularly important to understand which worst-case coherence values are achievable.
To this end, the Welch bound is commonly used in the literature.
When worst-case coherence achieves the Welch bound, the frame is equiangular and tight \cite{strohmer:acha03}; one of the biggest open problems in frame theory concerns equiangular tight frames \cite{scott:jmp10}.
However, equiangular tight frames cannot have more vectors than the square of the spatial dimension \cite{strohmer:acha03}, meaning the Welch bound is not tight whenever $N>M^2$.
When the number of vectors $N$ is exceedingly large, the following theorem gives a better bound:

\begin{theorem}[\cite{alon:dm03,nelson:jat11}]
\label{thm.asymptotic bound}
Every sufficiently large $M\times N$ unit norm frame $F$ with $N\geq2M$ and worst-case coherence $\mu_F<\frac{1}{2}$ satisfies
\begin{equation}
\mu_F^2\log\big(\tfrac{1}{\mu_F}\big)\geq\tfrac{C\log N}{M}
\end{equation}
for some constant $C>0$.
\end{theorem}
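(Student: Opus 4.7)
The plan is to reduce the theorem to a rank lower bound on the Gram matrix $G:=F^*F$ and then invoke the ``perturbed identity'' rank bound of Alon \cite{alon:dm03} (refined in \cite{nelson:jat11}) as a black box. Since $F$ has unit-norm columns, $G$ is an $N\times N$ Hermitian matrix with ones on the diagonal, and by definition of $\mu_F$ its off-diagonal entries satisfy $|G_{ij}|\leq\mu_F$ for $i\neq j$. Moreover, since $F$ has only $M$ rows, $\operatorname{rank}(G)\leq M$. The hypothesis $N\geq 2M$ ensures that $G$ is genuinely rank-deficient and that the Alon bound is the operative constraint rather than the trivial bound $\operatorname{rank}(G)\leq N$.

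Alon's theorem asserts that for any $N\times N$ Hermitian matrix $A$ with $A_{ii}=1$ and $|A_{ij}|\leq\epsilon$ off-diagonal, provided $N$ is sufficiently large and $\epsilon<\tfrac{1}{2}$, one has
\[
\operatorname{rank}(A)\;\geq\;\frac{C'\log N}{\epsilon^2\log(1/\epsilon)}
\]
for an absolute constant $C'>0$. Applying this to $G$ with $\epsilon=\mu_F$ and combining with $\operatorname{rank}(G)\leq M$ yields $M\cdot\mu_F^2\log(1/\mu_F)\geq C'\log N$, which is exactly the stated bound with $C=C'$. The hypothesis $\mu_F<\tfrac{1}{2}$ is precisely what is needed so that $\log(1/\mu_F)>0$ and the cited result is applicable.

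The substantive content is all encapsulated in the cited rank bound, whose proof uses the polynomial method: one constructs a low-degree polynomial $P$ of degree $d=\Theta(\log N/\log(1/\epsilon))$ that is small on $[-\epsilon,\epsilon]$ while being of controlled size at $x=1$, so that $P(A)$ is (after rescaling) a near-identity matrix on the one hand, while $\operatorname{rank}(P(A))\lesssim d\cdot\operatorname{rank}(A)$ follows from subadditivity of rank applied to the expansion of $P(A)$ in powers of $A$. Combining these gives $N\lesssim d\cdot\operatorname{rank}(A)$, which rearranges to the stated rank bound. The main obstacle, were one reproving the bound from scratch, is the careful construction of this polynomial (typically a rescaled Chebyshev polynomial) and the bookkeeping for the rank increase. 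For our application, however, the only verification required is that the hypotheses of the cited result hold under ours, which they do whenever $N$ is sufficiently large, $N\geq 2M$, and $\mu_F<\tfrac{1}{2}$, so no further work is needed beyond citation.
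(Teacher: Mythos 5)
The paper offers no proof of this statement---it is quoted directly from \cite{alon:dm03,nelson:jat11}---and your reduction (the Gram matrix $G=F^*F$ is an $N\times N$ Hermitian $\epsilon$-perturbed identity of rank at most $M$, to which one applies Alon's rank lower bound $\operatorname{rank}(A)\geq C'\log N/(\epsilon^2\log(1/\epsilon))$) is exactly the argument those references use, so your proposal is correct and matches the intended route. The one slightly loose point is your gloss on the hypothesis $N\geq 2M$: its real role is to guarantee, via the Welch bound, that $\mu_F=\Omega(N^{-1/2})$, which is a standing hypothesis of Alon's rank lemma, rather than merely to ensure that $G$ is rank-deficient.
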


For a fixed worst-case coherence $\mu_F<\frac{1}{2}$, this bound indicates that the number of vectors $N$ cannot exceed some exponential in the spatial dimension $M$, that is, $N\leq a^M$ for some $a>0$.
However, since the constant $C$ is not established in this theorem, it is unclear which base $a$ is appropriate for each $\mu_F$.
The following theorem is a little more explicit in this regard:

\begin{theorem}[\cite{MSEA03,XZG05}]
\label{thm.complex bound}
Every $M\times N$ unit norm frame $F$ has worst-case coherence $\mu_F\geq1-2N^{-1/(M-1)}$.
Furthermore, taking $N=\Theta(a^M)$, this lower bound goes to $1-\frac{2}{a}$ as $M\rightarrow\infty$.
\end{theorem}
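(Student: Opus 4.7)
The plan for the first inequality is a sphere-packing argument carried out in complex projective space. Since every real frame is also a complex one, we may assume $F \in \mathbb{C}^{M \times N}$ without loss of generality, and we may further assume $\mu_F < 1$ (otherwise the claim is trivial). Each unit-norm column $f_i$ descends to a point $[f_i] \in \mathbb{CP}^{M-1}$, and the Fubini--Study distance between $[f_i]$ and $[f_j]$ equals $\arccos|\langle f_i,f_j\rangle|$. By \eqref{eq.mu defn}, any two distinct such points are separated by at least $\arccos(\mu_F)$.

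The key step is to place a geodesic Fubini--Study ball $B_i$ of radius $\rho := \arccos(\mu_F)/2$ around each $[f_i]$. The triangle inequality forces these $N$ balls to be pairwise disjoint, so the sum of their volumes is at most $\mathrm{vol}(\mathbb{CP}^{M-1})$. I would then invoke the standard Fubini--Study volume formula,
\begin{equation*}
\frac{\mathrm{vol}(B_i)}{\mathrm{vol}(\mathbb{CP}^{M-1})} = \sin^{2(M-1)}(\rho),
\end{equation*}
together with the half-angle identity $\sin^2(\arccos(\mu_F)/2) = (1-\mu_F)/2$, to obtain
\begin{equation*}
N\,\bigg(\tfrac{1-\mu_F}{2}\bigg)^{M-1} \leq 1,
\end{equation*}
which rearranges exactly to $\mu_F \geq 1 - 2N^{-1/(M-1)}$.

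The asymptotic statement then follows from a direct limit calculation. With $N = \Theta(a^M)$, we have $\log N = M \log a + O(1)$, hence
\begin{equation*}
N^{-1/(M-1)} = \exp\!\bigg(-\frac{\log N}{M-1}\bigg) \longrightarrow \mathrm{e}^{-\log a} = \tfrac{1}{a} \quad \text{as } M \to \infty,
\end{equation*}
and thus $1 - 2N^{-1/(M-1)} \to 1 - 2/a$.

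The main obstacle is justifying the volume formula for Fubini--Study balls; this is classical in integral geometry but warrants either a direct computation in geodesic polar coordinates on $\mathbb{CP}^{M-1}$ or a precise reference. A slightly more hands-on alternative avoids projective space entirely: lift to $S^{2M-1}$, pack $S^1$-invariant tubes of angular radius $\rho$ around the orbit circles $\{\mathrm{e}^{\mathrm{i}\theta} f_i : \theta \in [0,2\pi)\}$, and estimate their measure directly via Fubini. Either route yields the same estimate, and the constant $2$ in the bound is recovered precisely because the relevant ``effective dimension'' in the complex-projective sphere packing is $2(M-1)$, exactly matching the exponent of $\sin(\rho)$ in the ball-volume formula.
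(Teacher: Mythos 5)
Your argument is correct, and it is essentially the argument behind the cited sources: the paper offers no proof of Theorem~\ref{thm.complex bound}, importing it from \cite{MSEA03,XZG05}, which derive it by exactly this packing of metric balls in complex projective space (your reduction to the complex case, the disjointness of the open balls of radius $\arccos(\mu_F)/2$, the half-angle identity, and the limit computation are all sound, and you correctly dispose of the degenerate case $\mu_F=1$ where distinct columns could coincide in $\mathbb{CP}^{M-1}$). The one step you flagged as an obstacle---the normalized ball-volume formula---has a one-line justification that avoids any integral geometry: under the unitarily invariant probability measure, $|\langle x,y\rangle|^2$ for fixed $x$ and uniformly random unit $y$ has the $\mathrm{Beta}(1,M-1)$ distribution, so the geodesic ball $\{[y]:\arccos|\langle x,y\rangle|\le\rho\}=\{[y]:|\langle x,y\rangle|^2\ge\cos^2\rho\}$ has normalized volume $(1-\cos^2\rho)^{M-1}=\sin^{2(M-1)}(\rho)$, which is all your packing bound requires.
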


For many applications, it does not make sense to use a complex frame, but the bound in Theorem~\ref{thm.complex bound} is known to be loose for real frames \cite{CHS96}.
We therefore improve Theorems~\ref{thm.asymptotic bound} and~\ref{thm.complex bound} for the case of real unit norm frames:

\begin{theorem}
\label{thm.bound}
Every real $M\times N$ unit norm frame $F$ has worst-case coherence
\begin{equation}
\label{eq.bound}
\mu_F\geq\cos\bigg[\pi\Big(\tfrac{M-1}{N\pi^{1/2}}~\tfrac{\Gamma(\frac{M-1}{2})}{\Gamma(\frac{M}{2})}\Big)^{\frac{1}{M-1}}\bigg].
\end{equation}
Furthermore, taking $N=\Theta(a^M)$, this lower bound goes to $\cos(\frac{\pi}{a})$ as $M\rightarrow\infty$.
\end{theorem}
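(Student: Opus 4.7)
The plan is to carry out a spherical-cap packing argument on $S^{M-1}$ that exploits the antipodal symmetry of real frame vectors. Set $\theta:=\arccos\mu_F$; since $\mu_F\in[0,1]$, we have $\theta\in[0,\pi/2]$. The hypothesis $|\langle f_i,f_j\rangle|\le\mu_F$ for $i\ne j$ is equivalent to the statement that every pair of distinct points in the $2N$-element antipodal set $\{\pm f_i\}_{i=1}^N\subset S^{M-1}$ has angular separation at least $\theta$ (antipodal pairs $f_i,-f_i$ are at distance $\pi\ge\theta$; for $i\ne j$, both $\langle f_i,f_j\rangle$ and $\langle f_i,-f_j\rangle$ have absolute value at most $\cos\theta$). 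A one-line triangle-inequality argument then shows that the $2N$ open caps $C_{\theta/2}(\pm f_i):=\{y\in S^{M-1}:\langle y,\pm f_i\rangle>\cos(\theta/2)\}$ are pairwise disjoint: a common point would force two centres to lie within angular distance less than $\theta$, a contradiction.

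Next, I would translate this disjointness into a packing inequality. Using the standard formulas $\mathrm{H}^{M-1}(S^{M-1})=2\pi^{M/2}/\Gamma(M/2)$ and
\[
A(\phi)=\frac{2\pi^{(M-1)/2}}{\Gamma((M-1)/2)}\int_0^\phi\sin^{M-2}(t)\,dt,
\]
summing the areas of the $2N$ disjoint caps against the total surface area yields
\[
\int_0^{\theta/2}\sin^{M-2}(t)\,dt\;\le\;\frac{\sqrt{\pi}\,\Gamma((M-1)/2)}{2N\,\Gamma(M/2)}.
\]
I would then invoke Jordan's inequality $\sin t\ge 2t/\pi$ (valid on $[0,\pi/2]$, and hence on $[0,\theta/2]$) to lower-bound the left-hand side by $\theta^{M-1}/\bigl(2(M-1)\pi^{M-2}\bigr)$. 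Rearranging gives
\[
\theta\;\le\;\pi\left(\frac{M-1}{N\sqrt{\pi}}\,\frac{\Gamma((M-1)/2)}{\Gamma(M/2)}\right)^{\!1/(M-1)}.
\]
Since $\cos$ is decreasing on $[0,\pi]$---and since \eqref{eq.bound} is trivially satisfied whenever its right-hand side is non-positive, as $\mu_F\ge 0$---taking cosines of both sides delivers \eqref{eq.bound}.

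For the asymptotic statement, Stirling's formula gives $\Gamma((M-1)/2)/\Gamma(M/2)\sim\sqrt{2/M}$, so with $N=\Theta(a^M)$ the bracketed quantity inside the $(M-1)$-th root is $\Theta(\sqrt{M}\,a^{-M})$. The polynomial factor dies under that root while $a^{-M/(M-1)}\to a^{-1}$, so the argument of $\cos$ converges to $\pi/a$ and the bound to $\cos(\pi/a)$, as asserted.

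The main obstacle is essentially bookkeeping: one must check that the powers of $2$, the powers of $\pi$, and the factor of $(M-1)$ produced by Jordan's inequality combine with the $\Gamma$-ratio from the sphere-area formulas to yield \emph{exactly} the expression in \eqref{eq.bound} (the key cancellation is $\pi^{M-3/2}=\pi^{M-1}/\sqrt{\pi}$, which is what manufactures the $\sqrt{\pi}$ in the denominator). The remaining ingredients---disjointness of the antipodal caps and the Stirling calculation---are routine.
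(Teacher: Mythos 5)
Your proposal is correct and follows essentially the same route as the paper: antipodal doubling to $2N$ points on $S^{M-1}$, a spherical-cap packing bound (the paper phrases it as a pigeonhole statement that two points share a cap of area $\tfrac{1}{2N}\mathrm{H}^{M-1}(S^{M-1})$, which is equivalent to your disjoint-caps inequality), the same Jordan-inequality estimate $\sin t\geq 2t/\pi$ on the cap-area integral, and Stirling for the asymptotics. The constants work out exactly as you anticipate, and your explicit handling of the case where the right-hand side of \eqref{eq.bound} is non-positive is a small point the paper leaves implicit.
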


Before proving this theorem, we first consider the special case where the spatial dimension is $M=3$:

\begin{lemma}
\label{lem.3d points}
Given $N$ points on the unit sphere $S^{2}\subseteq\mathbb{R}^3$, the smallest angle between points is $\leq2\cos^{-1}\big(1-\frac{2}{N}\big)$.
\end{lemma}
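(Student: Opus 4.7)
The plan is to use a standard spherical cap packing argument. Let $\theta := \min_{i \neq j} \angle(x_i, x_j)$ denote the smallest (geodesic) angle between the $N$ points, where we want to bound $\theta$ from above. By definition of $\theta$, the open spherical caps of angular radius $\theta/2$ centered at each of the $N$ points are pairwise disjoint: if two such caps intersected at a point $y$, then the triangle inequality on the sphere would force the two centers to lie within angular distance less than $\theta$ of each other, contradicting the minimality of $\theta$.

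Next I would invoke the standard formula for the surface area of a spherical cap of angular radius $\alpha$ on the unit sphere $S^2$, namely $2\pi(1-\cos\alpha)$, and compare the total area of the $N$ disjoint caps to the area of $S^2$:
\begin{equation*}
N \cdot 2\pi\bigl(1 - \cos(\theta/2)\bigr) \leq 4\pi.
\end{equation*}
Rearranging gives $\cos(\theta/2) \geq 1 - \tfrac{2}{N}$, and since $\cos^{-1}$ is decreasing on $[-1,1]$, we obtain
\begin{equation*}
\theta \leq 2\cos^{-1}\!\bigl(1 - \tfrac{2}{N}\bigr),
\end{equation*}
which is exactly the claimed bound.

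I do not anticipate a serious obstacle here. The only subtlety is justifying disjointness of the caps at their boundaries; this is easily handled by working with open caps (any pair of open caps of radius $\theta/2$ is disjoint by the triangle inequality on the sphere, so the area sum inequality is valid). Note also that the argument is intrinsically three-dimensional in that it uses the explicit cap area formula for $S^2$; generalizing to $S^{M-1}$ (as needed for Theorem~\ref{thm.bound}) will require the corresponding cap-area formula involving $\Gamma$-functions, which accounts for the beta/gamma factors appearing in \eqref{eq.bound}.
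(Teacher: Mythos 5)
Your proof is correct and is essentially the paper's argument: both rest on the disjointness of spherical caps of angular radius half the minimum angle centered at the points, combined with the cap-area formula $2\pi(1-\cos\alpha)$. The only cosmetic difference is that you state the packing inequality directly, whereas the paper phrases it as a pigeonhole claim (some closed cap of area $4\pi/N$ must contain two points) proved by contradiction via the same disjoint-cap computation.
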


\begin{proof}
We first claim there exists a closed spherical cap in $S^{2}$ with area $\smash{\frac{4\pi}{N}}$ that contains two of the $N$ points.  
Suppose otherwise, and take $\gamma$ to be the angular radius of a spherical cap with area $\smash{\frac{4\pi}{N}}$.  
That is, $\gamma$ is the angle between the center of the cap and every point on the boundary.  
Since the cap is closed, we must have that the smallest angle $\alpha$ between any two of our $N$ points satisfies $\alpha>2\gamma$.  
Let $C(p,\theta)$ denote the closed spherical cap centered at $p\in S^2$ of angular radius $\theta$, and let $P$ denote our set of $N$ points.  
Then we know for $p\in P$, the $C(p,\gamma)$'s are disjoint, $\frac{\alpha}{2}>\gamma$, and $\bigcup_{p\in P}C(p,\tfrac{\alpha}{2})\subseteq S^2$, and so taking 2-dimensional Hausdorff measures on the sphere gives
\begin{equation*} 
\mathrm{H}^2(S^2)=4\pi=\mathrm{H}^2\bigg(\bigcup_{p\in P}C(p,\gamma)\bigg)<\mathrm{H}^2\bigg(\bigcup_{p\in P}C(p,\tfrac{\alpha}{2})\bigg)\leq\mathrm{H}^2(S^2),
\end{equation*} 
a contradiction.  

Since two of the points reside in a spherical cap of area $\smash{\frac{4\pi}{N}}$, we know $\alpha$ is no more than twice the radius of this cap.  
We use spherical coordinates to relate the cap's area to the radius:
$\smash{\mathrm{H}^2(C(\cdot,\gamma))=2\pi\int_0^\gamma\sin\phi~\mathrm{d}\phi=2\pi(1-\cos\gamma)}$. 
Therefore, when $\smash{\mathrm{H}^2(C(\cdot,\gamma))=\frac{4\pi}{N}}$, we have $\gamma=\cos^{-1}(1-\frac{2}{N})$, and so $\alpha\leq2\gamma$ gives
the result.
\end{proof}

\begin{theorem}
\label{thm.3d points}
Every real $3\times N$ unit norm frame $F$ has worst-case coherence $\mu_F\geq1-\frac{4}{N}+\frac{2}{N^2}$.
\end{theorem}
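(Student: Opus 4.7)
The strategy is to reduce the real-frame coherence problem to Lemma~\ref{lem.3d points} via antipodal symmetrization. The definition of worst-case coherence involves $|\langle f_i, f_j\rangle|$, which corresponds geometrically to the cosine of the smaller of the two angles formed by the lines spanned by $f_i$ and $f_j$. So rather than looking at the $N$ points $f_1, \ldots, f_N$ on $S^2$, I would consider the antipodally extended set $\tilde F := \{\pm f_1, \ldots, \pm f_N\}$. If $f_i = \pm f_j$ for some $i \neq j$, then $\mu_F = 1$ and the inequality holds trivially, so I may assume $\tilde F$ consists of $2N$ distinct points on $S^2$.

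The key observation is that the smallest angle realized in $\tilde F$ is exactly $\cos^{-1}(\mu_F)$. Indeed, any two distinct points in $\tilde F$ have the form $\epsilon_i f_i$ and $\epsilon_j f_j$ with $\epsilon_i, \epsilon_j \in \{\pm 1\}$; either $i = j$ with opposite signs (angle $\pi$) or $i \neq j$, in which case the angle is $\cos^{-1}(\pm \langle f_i, f_j\rangle)$. Minimizing the angle over $\tilde F$ is therefore equivalent to maximizing $|\langle f_i, f_j\rangle|$ over $i \neq j$, which by definition equals $\mu_F$; the antipodal pairs contribute angle $\pi$ and do not affect the minimum provided the upper bound from the lemma lies strictly below $\pi$, which it does for $N \geq 2$.

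Now I would apply Lemma~\ref{lem.3d points} to the $2N$ points of $\tilde F$, yielding the bound
\begin{equation*}
\cos^{-1}(\mu_F) \;\leq\; 2\cos^{-1}\!\Big(1 - \tfrac{2}{2N}\Big) \;=\; 2\cos^{-1}\!\Big(1 - \tfrac{1}{N}\Big).
\end{equation*}
Taking cosines (which reverses inequalities on $[0,\pi]$) and invoking the double-angle identity $\cos(2\theta) = 2\cos^2\theta - 1$, I obtain
\begin{equation*}
\mu_F \;\geq\; 2\Big(1 - \tfrac{1}{N}\Big)^2 - 1 \;=\; 1 - \tfrac{4}{N} + \tfrac{2}{N^2},
\end{equation*}
which is exactly the desired bound. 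There is no real obstacle in this argument; the only subtle point is correctly accounting for the fact that the minimum in $\tilde F$ is attained between non-antipodal pairs so that the cosine of the minimum angle coincides with $\mu_F$ rather than $1$. The rest is a clean application of the previous lemma combined with the double-angle formula.
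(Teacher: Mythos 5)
Your proposal is correct and follows exactly the paper's argument: pass to the $2N$ antipodal points $\{\pm f_i\}$ on $S^2$, apply Lemma~\ref{lem.3d points} to get $\alpha\leq 2\cos^{-1}(1-\tfrac{1}{N})$, and finish with the double-angle formula. The paper states this more tersely, while you spell out the (correct) bookkeeping that the minimal angle in the antipodal set corresponds to $\mu_F$; the substance is identical.
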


\begin{proof}
Packing $N$ unit vectors in $\mathbb{R}^3$ corresponds to packing $2N$ antipodal points in $S^2$, 
and so Lemma~\ref{lem.3d points} gives $\alpha\leq2\cos^{-1}(1-\frac{1}{N})$.
Applying the double angle formula to $\mu_F=\cos\alpha\geq\cos[2\cos^{-1}(1-\frac{1}{N})]$ gives the result.
\end{proof}

\begin{figure}[t]
\centering
\begin{picture}(320,193)(0,0)
\put(0,0){\includegraphics[width=4.5in]{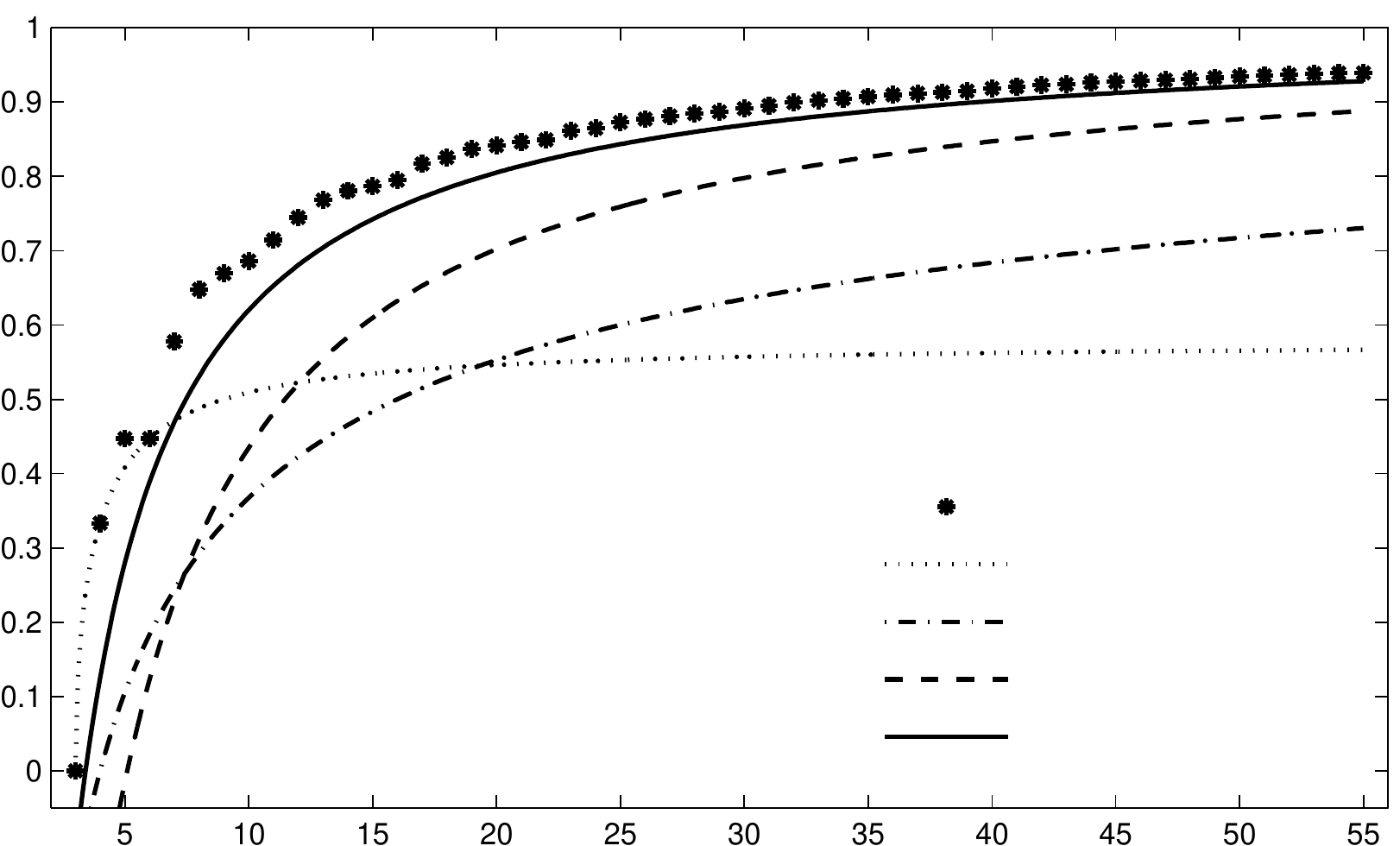}}

\put(330,0){$N$}
\put(-12,185){$\mu_F$}

\put(240,76){\footnotesize{Numerically optimal}}
\put(240,63){\footnotesize{Welch bound}}
\put(240,49){\footnotesize{Theorem~\ref{thm.complex bound}}}
\put(240,36){\footnotesize{Theorem~\ref{thm.bound}}}
\put(240,23){\footnotesize{Theorem~\ref{thm.3d points}}}

\end{picture}

\caption{Different bounds on worst-case coherence for $M=3$, $N=3,\ldots,55$.
Stars give numerically determined optimal worst-case coherence of $N$ real unit vectors, found in \cite{CHS96}.
Dotted curve gives Welch bound, dash-dotted curve gives bound from Theorem~\ref{thm.complex bound}, dashed curve gives bound from Theorem~\ref{thm.bound}, and solid curve gives bound from Theorem~\ref{thm.3d points}.
 \label{figure}}
\end{figure}

Now that we understand the special case where $M=3$, we tackle the general case:

\begin{proof}[Proof of Theorem~\ref{thm.bound}]
As in the proof of Theorem~\ref{thm.3d points}, we relate packing $N$ unit vectors to packing $2N$ points in the hypersphere $S^{M-1}\subseteq\mathbb{R}^M$.
The argument in the proof of Lemma~\ref{lem.3d points} generalizes so that two of the $2N$ points must reside in some closed hyperspherical cap of hypersurface area $\frac{1}{2N}\mathrm{H}^{M-1}(S^{M-1})$.
Therefore, the smallest angle $\alpha$ between these points is no more than twice the radius of this cap.
Let $C(\gamma)$ denote a hyperspherical cap of angular radius $\gamma$.
Then we use hyperspherical coordinates to get
\begin{align}
\nonumber
\mathrm{H}^{M-1}(C(\gamma))
&=\int_{\phi_1=0}^\gamma\int_{\phi_2=0}^\pi\cdots\int_{\phi_{M-2}=0}^\pi\int_{\phi_{M-1}=0}^{2\pi}\sin^{M-2}(\phi_1)\cdots\sin^1(\phi_{M-2})~\mathrm{d}\phi_{M-1}\cdots\mathrm{d}\phi_1\\
\nonumber
&=2\pi\bigg(\prod_{j=1}^{M-3}\pi^{1/2}\tfrac{\Gamma(\frac{j+1}{2})}{\Gamma(\frac{j}{2}+1)}\bigg)\int_0^\gamma\sin^{M-2}\phi ~\mathrm{d}\phi\\
\label{eq.gamma}
&=\tfrac{2\pi^{(M-1)/2}}{\Gamma(\frac{M-1}{2})}\int_0^\gamma\sin^{M-2}\phi ~\mathrm{d}\phi.
\end{align}
We wish to solve for $\gamma$, but analytically inverting $\int_0^\gamma\sin^{M-2}\phi ~\mathrm{d}\phi$ is difficult.
Instead, we use $\sin\phi\geq\frac{2\phi}{\pi}$ for $\phi\in[0,\frac{\pi}{2}]$.
Note that we do not lose generality by forcing $\gamma\leq\frac{\pi}{2}$, since this is guaranteed with $N\geq2$.
Continuing \eqref{eq.gamma} gives
\begin{equation}
\label{eq.cap hypersurface area} 
\mathrm{H}^{M-1}(C(\gamma))
\geq\tfrac{2\pi^{(M-1)/2}}{\Gamma(\frac{M-1}{2})}\int_0^\gamma\big(\tfrac{2\phi}{\pi}\big)^{M-2}\mathrm{d}\phi
=\tfrac{(2\gamma)^{M-1}}{(M-1)\pi^{(M-3)/2}\Gamma(\frac{M-1}{2})}.
\end{equation}
Using the formula for a hypersphere's hypersurface area, we can express the left-hand side of \eqref{eq.cap hypersurface area}:
\begin{equation*}
\tfrac{(2\gamma)^{M-1}}{(M-1)\pi^{(M-3)/2}\Gamma(\frac{M-1}{2})}
\leq \mathrm{H}^{M-1}(C(\gamma))
=\tfrac{1}{2N}\mathrm{H}^{M-1}(S^{M-1})
=\tfrac{\pi^{M/2}}{N\Gamma(\frac{d}{2})}.
\end{equation*}
Isolating $2\gamma$ above and using $\alpha\leq2\gamma$ and $\mu=\cos\alpha$ gives \eqref{eq.bound}.
The second part of the result comes from a simple application of Stirling's approximation.
\end{proof}

In \cite{CHS96}, numerical results are given for $M=3$, and we compare these results to Theorems~\ref{thm.complex bound} and~\ref{thm.bound} in Figure~\ref{figure}.
Considering this figure, we note that the bound in Theorem~\ref{thm.complex bound} is inferior to the maximum of the Welch bound and the bound in Theorem~\ref{thm.bound}, at least when $M=3$.
This illustrates the degree to which Theorem~\ref{thm.bound} improves the bound in Theorem~\ref{thm.complex bound} for real frames.
In fact, since $\cos(\frac{\pi}{a})\geq 1-\frac{2}{a}$ for all $a\geq2$, the bound for real frames in Theorem~\ref{thm.bound} is asymptotically better than the bound for complex frames in Theorem~\ref{thm.complex bound}.
Moreover, for $M=2$, Theorem~\ref{thm.bound} says $\mu\geq\cos(\frac{\pi}{N})$, and \cite{BK06} proved this bound to be tight for every $N\geq2$.
Lastly, Figure~\ref{figure} illustrates that Theorem~\ref{thm.3d points} improves the bound in Theorem~\ref{thm.bound} for the case $M=3$.

In many applications, large dictionaries are built to obtain sparse reconstruction, but the known guarantees on sparse reconstruction place certain requirements on worst-case coherence.
Asymptotically, the bounds in Theorems~\ref{thm.complex bound} and \ref{thm.bound} indicate that certain exponentially large dictionaries will not satisfy these requirements.
For example, if $N=\Theta(3^M)$, then $\mu_F=\Omega(\frac{1}{3})$ by Theorem~\ref{thm.complex bound}, and if the frame is real, we have $\mu_F=\Omega(\frac{1}{2})$ by Theorem~\ref{thm.bound}.
Such a dictionary will only work for sparse reconstruction if the sparsity level $K$ is sufficiently small; deterministic guarantees require $K<\mu_F^{-1}$ \cite{donoho:tit06b,tropp:tit06}, while probabilistic guarantees require $K<\mu_F^{-2}$ \cite{bajwa:jcn10,tropp:cras08}, and so in this example, the dictionary can, at best, only accommodate sparsity levels that are smaller than 10.
Unfortunately, in real-world applications, we can expect the sparsity level to scale with the signal dimension.
This in mind, Theorems~\ref{thm.complex bound} and \ref{thm.bound} tell us that dictionaries can only be used for sparse reconstruction if $N=O((2+\epsilon)^M)$ for some sufficiently small $\epsilon>0$.  
To summarize, the Welch bound is known to be tight only if $N\leq M^2$, and Theorems~\ref{thm.complex bound} and \ref{thm.bound} give bounds which are asympotically better than the Welch bound whenever $N=\Omega(2^M)$.
When $N$ is between $M^2$ and $2^M$, the best bound to date is the (loose) Welch bound, and so more work needs to be done to bound worst-case coherence in this parameter region.

\section{Reducing average coherence}

In \cite{bajwa:jcn10}, average coherence is used to derive a number of guarantees on sparse signal processing.
Since average coherence is so new to the frame theory literature, this section will investigate how average coherence relates to worst-case coherence and the spectral norm.
We start with a definition:

\begin{definition}[Wiggling and flipping equivalent frames]
\label{def.flipping and wiggling}
We say the frames $F$ and $G$ are \emph{wiggling equivalent} if there exists a diagonal matrix $D$ of unimodular entries such that $G=FD$.
Furthermore, they are \emph{flipping equivalent} if $D$ is real, having only $\pm1$'s on the diagonal.
\end{definition}

The terms ``wiggling'' and ``flipping'' are inspired by the fact that individual frame elements of such equivalent frames are related by simple unitary operations.
Note that every frame with $N$ nonzero frame elements belongs to a flipping equivalence class of size $2^N$, while being wiggling equivalent to uncountably many frames.
The importance of this type of frame equivalence is, in part, due to the following lemma, which characterizes the shared geometry of wiggling equivalent frames:

\begin{lemma}[Geometry of wiggling equivalent frames]\label{lem:geom_eqframes}
Wiggling equivalence preserves the norms of frame elements, the worst-case coherence, and the spectral norm.
\end{lemma}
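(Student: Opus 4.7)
The plan is to unpack the definition $G=FD$ with $D=\mathrm{diag}(d_1,\ldots,d_N)$, $|d_n|=1$, and check each of the three invariants by a one-line computation. Writing $g_n = d_n f_n$ takes care of everything.

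First I would verify frame-element norms: since $\|g_n\| = \|d_n f_n\| = |d_n|\,\|f_n\| = \|f_n\|$, norms are preserved entrywise. Next, for worst-case coherence, I would compute the pairwise inner products: for $i\neq j$,
\begin{equation*}
|\langle g_i, g_j\rangle| = |\langle d_i f_i, d_j f_j\rangle| = |d_i\overline{d_j}|\,|\langle f_i,f_j\rangle| = |\langle f_i,f_j\rangle|,
\end{equation*}
so taking the maximum over $i\neq j$ yields $\mu_G = \mu_F$.

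Finally, for the spectral norm, the key observation is that a diagonal matrix with unimodular entries is unitary, i.e., $D^*D = \mathrm{I}_N$. Hence right-multiplication by $D$ preserves the spectral norm:
\begin{equation*}
\|G\|_2^2 = \|FD\|_2^2 = \|(FD)(FD)^*\|_2 = \|FDD^*F^*\|_2 = \|FF^*\|_2 = \|F\|_2^2.
\end{equation*}
Combining the three computations gives the lemma. There is no real obstacle here; the only subtlety worth flagging is that unimodularity (as opposed to merely $\pm 1$-valuedness) is exactly what is needed to make $D$ unitary and to make the phase factors $d_i\overline{d_j}$ cancel in modulus, which is why the definition is stated in that generality.
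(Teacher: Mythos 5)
Your proof is correct and follows essentially the same route as the paper: norms are immediate, the pairwise inner products of $G$ differ from those of $F$ only by unimodular phase factors (equivalently, $G^*G=D^*F^*FD$ has off-diagonal entries of the same modulus), and the spectral norm is handled via $\|GG^*\|_2=\|FDD^*F^*\|_2=\|FF^*\|_2$. Your closing remark about why full unimodularity, not just $\pm1$-valuedness, is what the argument uses is a nice touch but adds nothing beyond the paper's reasoning.
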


\begin{proof}
Take two frames $F$ and $G$ such that $G=FD$.
The first claim is immediate.
Next, the Gram matrices are related by $G^*G=D^*F^*FD$.
Since corresponding off-diagonal entries are equal in modulus, we know the worst-case coherences are equal.
Finally, $\|G\|_2^2=\|GG^*\|_2^2=\|FDD^*F^*\|_2^{}=\|FF^*\|_2^{}=\|F\|_2^2$, and so we are done.
\end{proof}

Wiggling and flipping equivalence are not entirely new to frame theory.
For a real equiangular tight frame $F$, the Gram matrix $F^*F$ is completely determined by the sign pattern of the off-diagonal entries, which can in turn be interpreted as the Seidel adjacency matrix of a graph $G_F$.
As such, flipping a frame element $f\in F$ has the effect of negating the corresponding row and column in the Gram matrix, which further corresponds to \emph{switching} the adjacency rule for that vertex $v_f\in V(G_F)$ in the graph---vertices are adjacent to $v_f$ after switching precisely when they were not adjacent before switching. 
Graphs are called \emph{switching equivalent} if there is a sequence of switching operations that produces one graph from the other; this equivalence was introduced in \cite{vanlint:im66} and was later extensively studied by Seidel in \cite{S73,seidel:laa68}.
Since flipping equivalent real equiangular tight frames correspond to switching equivalent graphs, the terms have become interchangeable.
For example, \cite{bodmann:jfa10} uses switching (i.e., wiggling and flipping) equivalence to make progress on an important problem in frame theory called the \emph{Paulsen problem}, which asks how close a nearly unit norm, nearly tight frame must be to a unit norm tight frame.

Now that we understand wiggling and flipping equivalence, we are ready for the main idea behind this section.
Suppose we are given a unit norm frame with acceptable spectral norm and worst-case coherence, but we also want the average coherence to satisfy (SCP-2).
Then by Lemma~\ref{lem:geom_eqframes}, all of the wiggling equivalent frames will also have acceptable spectral norm and worst-case coherence, and so it is reasonable to check these frames for good average coherence.
In fact, the following theorem guarantees that at least one of the flipping equivalent frames will have good average coherence, with only modest requirements on the original frame's redundancy.

\begin{theorem}[Constructing frames with low average coherence]\label{thm:avc_rand}
Let $F$ be an $M\times N$ unit norm frame with $\smash{M < \frac{N-1}{4\log 4N}}$.
Then there exists a frame $G$ that is flipping equivalent to $F$ and satisfies $\smash{\nu_G\leq\frac{\mu_G}{\sqrt{M}}}$.
\end{theorem}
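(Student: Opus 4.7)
The plan is a probabilistic flipping argument. Choose $D = \mathrm{diag}(\epsilon_1,\ldots,\epsilon_N)$ with signs $\epsilon_j$ drawn independently and uniformly from $\{\pm 1\}$, and set $G := FD$. By Lemma~\ref{lem:geom_eqframes}, every such $G$ inherits the worst-case coherence and spectral norm of $F$; only the average coherence can be affected by the choice of signs. The aim is therefore to show that, with positive probability, the random $G$ satisfies $\nu_G \le \mu_G/\sqrt{M}$, from which the existence of a deterministic sign pattern with the stated property follows immediately.

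For a fixed index $i$, since $|\epsilon_i|=1$, the inner sum defining the $i$th contribution to $\nu_G$ has modulus
$$\left|\sum_{j\neq i}\langle g_i,g_j\rangle\right| = \left|\sum_{j\neq i}\epsilon_j\langle f_i,f_j\rangle\right| =: |X_i|,$$
a Rademacher sum of the deterministic coefficients $\langle f_i,f_j\rangle$, each bounded in modulus by $\mu_F$. Since $\sum_{j\neq i}|\langle f_i,f_j\rangle|^2 \le (N-1)\mu_F^2$, Hoeffding's inequality yields
$$\Pr\!\left(|X_i| > \tfrac{(N-1)\mu_F}{\sqrt{M}}\right) \le 2\exp\!\left(-\tfrac{(N-1)^2\mu_F^2/M}{2(N-1)\mu_F^2}\right) = 2\exp\!\left(-\tfrac{N-1}{2M}\right).$$

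A union bound over the $N$ indices then gives
$$\Pr\!\left(\nu_G > \tfrac{\mu_G}{\sqrt{M}}\right) \le 2N\exp\!\left(-\tfrac{N-1}{2M}\right).$$
Under the hypothesis $M < \frac{N-1}{4\log 4N}$ one has $\frac{N-1}{2M} > 2\log 4N$, and so this probability is at most $2N/(4N)^2 = 1/(8N) < 1$; some realization of the random signs therefore yields the required $G$. The only delicate point is matching the Hoeffding concentration to the $\mu_G/\sqrt{M}$ target: the worst-case bound $|\langle f_i,f_j\rangle|\le\mu_F$ supplies exactly the variance factor $(N-1)\mu_F^2$ needed so that the threshold $(N-1)\mu_F/\sqrt{M}$ produces the exponent $(N-1)/(2M)$, which in turn drives the logarithmic redundancy condition on $M$ versus $N$. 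No further refinement (such as a Bernstein-type bound) appears necessary, so the argument closes cleanly.
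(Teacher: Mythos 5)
Your approach is exactly the paper's: randomize the signs with a Rademacher sequence, bound each of the $N$ row-sums $\bigl|\sum_{j\neq i}\epsilon_j\langle f_i,f_j\rangle\bigr|$ by Hoeffding, union bound, and extract a deterministic sign pattern from the positive-probability event. The one step that does not go through as written is the Hoeffding application itself: the theorem is stated for arbitrary unit norm frames, and in the complex case the coefficients $\langle f_i,f_j\rangle$ are complex, so $X_i$ is a complex-valued Rademacher sum and the real Hoeffding bound $2\exp\bigl(-t^2/(2\sum_j|a_j|^2)\bigr)$ does not directly apply to $|X_i|$. The standard fix is to split into real and imaginary parts (each a real Rademacher sum with coefficients bounded by $\mu_F$) and union bound, which degrades the tail to $4\exp\bigl(-(N-1)/(4M)\bigr)$ rather than your $2\exp\bigl(-(N-1)/(2M)\bigr)$. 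This is precisely what the paper does via a ``complex version of Hoeffding's inequality,'' and it is why the hypothesis is calibrated as $M<\frac{N-1}{4\log 4N}$: after the union bound over $i$ one needs $4N\mathrm{e}^{-(N-1)/4M}<1$, i.e.\ $\frac{N-1}{4M}>\log 4N$, which is exactly the stated condition. So your closing remark that no further refinement is needed is wrong for complex frames; for real frames your computation is correct and the hypothesis is more than sufficient. The repair is routine, but you should not assert the real-coefficient Hoeffding constant for a complex sum.
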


\begin{proof}
Take $\{R_n\}_{n=1}^N$ to be a Rademacher sequence that independently takes values $\pm1$, each with probability $\frac{1}{2}$.
We use this sequence to randomly flip $F$; define $Z:=F~\mathrm{diag}\{R_n\}_{n=1}^N$.
Note that if $\smash{\Pr(\nu_Z\leq\frac{\mu_F}{\sqrt{M}})>0}$, we are done.
Fix some $i\in\{1,\ldots,N\}$.
Then
\begin{equation}
\label{pfeqn:avc_rand_tail1}
\Pr\Bigg(\tfrac{1}{N-1} \bigg|\sum_{\substack{j=1\\ j\neq i}}^N \langle z_i,z_j\rangle\bigg| > \tfrac{\mu_F}{\sqrt{M}}\Bigg) 
=\Pr\Bigg(\bigg|\sum_{\substack{j=1\\ j\neq i}}^N R_j\langle f_i,f_j\rangle\bigg| > \tfrac{(N-1)\mu_F}{\sqrt{M}}\Bigg). 
\end{equation}
We can view $\sum_{j\neq i} R_j\langle f_i,f_j\rangle$ as a sum of $N-1$ independent zero-mean complex random variables that are bounded by $\mu_F$.
We can therefore use a complex version of Hoeffding's inequality \cite{hoeffding:jasa63} (see, e.g., \cite[Lemma~3.8]{bajwa:thesis}) to bound the probability expression in \eqref{pfeqn:avc_rand_tail1} as $\leq4\mathrm{e}^{-(N-1)/4M}$.
From here, a union bound over all $N$ choices for $i$ gives $\Pr(\nu_Z\leq\frac{\mu_F}{\sqrt{M}})\geq 1-4N\mathrm{e}^{-(N-1)/4M}$,
and so $M < \frac{N-1}{4\log 4N}$ implies $\Pr(\nu_Z\leq\frac{\mu_F}{\sqrt{M}})>0$, as desired.
\end{proof}

While Theorem~\ref{thm:avc_rand} guarantees the existence of a flipping equivalent frame with good average coherence, the result does not describe how to find it.
Certainly, one could check all $2^N$ frames in the flipping equivalence class, but such a procedure is computationally slow.
As an alternative, we propose a linear-time flipping algorithm (Algorithm~\ref{alg:flipping}).
The following theorem guarantees that linear-time flipping will produce a frame with good average coherence, but it requires the original frame's redundancy to be higher than what suffices in Theorem~\ref{thm:avc_rand}.

\begin{algorithm*}[t]
\caption{Linear-time flipping}
\label{alg:flipping}
\textbf{Input:} An $M\times N$ unit norm frame $F$\\
\textbf{Output:} An $M\times N$ unit norm frame $G$ that is flipping equivalent to $F$
\begin{algorithmic}
\STATE $g_1\leftarrow f_1$ \hfill \COMMENT{Keep first frame element}
\FOR{$n=2$ to $N$}
\IF{$\|\sum_{i=1}^{n-1}g_i+f_n\|\leq\|\sum_{i=1}^{n-1}g_i-f_n\|$} 
\STATE $g_n\leftarrow f_n$ \hfill \COMMENT{Keep frame element to make sum length shorter}
\ELSE
\STATE $g_n\leftarrow -f_n$ \hfill \COMMENT{Flip frame element to make sum length shorter}
\ENDIF
\ENDFOR
\end{algorithmic}
\end{algorithm*}

\begin{theorem}
\label{thm.alg}
Suppose $N\geq M^2+3M+3$.
Then Algorithm~\ref{alg:flipping} outputs an $M\times N$ frame $G$ that is flipping equivalent to $F$ and satisfies $\nu_G\leq\frac{\mu_G}{\sqrt{M}}$.
\end{theorem}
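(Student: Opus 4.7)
The plan is to combine Lemma~\ref{lem.sufficient conditions}(iii) with a telescoping bound on the running sum produced by the algorithm. Since $G$ is manifestly flipping equivalent to $F$ by construction, and since Lemma~\ref{lem.sufficient conditions}(iii) guarantees $\nu_G\leq\mu_G/\sqrt{M}$ as soon as $N\geq M^2+3M+3$ and $\|\sum_{n=1}^N g_n\|^2 \leq N$, it suffices to prove that Algorithm~\ref{alg:flipping} yields a frame whose elements sum to a vector of squared norm at most $N$.

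The key step is an induction on the partial sums $s_n := \sum_{i=1}^n g_i$, with the inductive claim that $\|s_n\|^2 \leq n$ for every $n=1,\dots,N$. The base case is immediate from $\|g_1\|=\|f_1\|=1$. For the inductive step, observe that $g_n \in \{f_n,-f_n\}$ and that the algorithm selects the sign $\epsilon\in\{-1,+1\}$ minimizing $\|s_{n-1}+\epsilon f_n\|$. Expanding
\begin{equation*}
\|s_{n-1}+\epsilon f_n\|^2 = \|s_{n-1}\|^2 + 1 + 2\epsilon\,\mathrm{Re}\langle s_{n-1},f_n\rangle,
\end{equation*}
the minimum over $\epsilon\in\{-1,+1\}$ equals $\|s_{n-1}\|^2 + 1 - 2|\mathrm{Re}\langle s_{n-1},f_n\rangle|$, which is bounded above by $\|s_{n-1}\|^2 + 1$. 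Combined with the inductive hypothesis, this yields $\|s_n\|^2 \leq n$, completing the induction and giving $\|s_N\|^2 \leq N$ at $n=N$.

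With the bound $\|\sum_{n=1}^N g_n\|^2 \leq N$ in hand and the hypothesis $N\geq M^2+3M+3$, Lemma~\ref{lem.sufficient conditions}(iii) applied to $G$ immediately delivers $\nu_G\leq\mu_G/\sqrt{M}$, as desired.

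There is essentially no obstacle here: the only substantive observation is that a greedy choice of sign never increases the squared norm by more than $1$ per step, which is a one-line consequence of the parallelogram-type identity above. The hypothesis $N\geq M^2+3M+3$ is not used by the algorithm itself; it is inherited entirely from Lemma~\ref{lem.sufficient conditions}(iii), which is what converts the geometric statement $\|\sum_n g_n\|^2\leq N$ into the desired average-coherence bound. Thus the proof is short and the reliance on prior results is clean.
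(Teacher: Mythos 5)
Your proposal is correct and follows essentially the same route as the paper: reduce to showing $\|\sum_{n=1}^N g_n\|^2\leq N$ via Lemma~\ref{lem.sufficient conditions}(iii), then induct on the partial sums, using the greedy sign choice to show each step increases the squared norm by at most $\|g_{k+1}\|^2=1$. The only cosmetic difference is that you phrase the key step as the minimum over signs being at most $\|s_{n-1}\|^2+1$, whereas the paper deduces $\mathrm{Re}\langle\sum_{n=1}^k g_n,g_{k+1}\rangle\leq 0$ from the algorithm's comparison; these are the same observation.
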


\begin{proof}
Considering Lemma~\ref{lem.sufficient conditions}(iii), it suffices to have $\|\sum_{n=1}^N g_n\|^2\leq N$.
We will use induction to show $\|\sum_{n=1}^k g_n\|^2\leq k$ for $k=1,\ldots,N$.
Clearly, $\|\sum_{n=1}^1 g_n\|^2=\|f_n\|^2=1\leq1$.
Now assume $\|\sum_{n=1}^k g_n\|^2\leq k$.
Then by our choice for $g_{k+1}$ in Algorithm~\ref{alg:flipping}, we know that
$\|\sum_{n=1}^kg_n+g_{k+1}\|^2\leq\|\sum_{n=1}^kg_n-g_{k+1}\|^2$.
Expanding both sides of this inequality gives
\begin{equation*}
\bigg\|\sum_{n=1}^kg_n\bigg\|^2+2\mathrm{Re}\bigg\langle\sum_{n=1}^kg_n,g_{k+1}\bigg\rangle+\|g_{k+1}\|^2
\leq\bigg\|\sum_{n=1}^kg_n\bigg\|^2-2\mathrm{Re}\bigg\langle\sum_{n=1}^kg_n,g_{k+1}\bigg\rangle+\|g_{k+1}\|^2,
\end{equation*}
and so $\mathrm{Re}\langle\sum_{n=1}^kg_n,g_{k+1}\rangle\leq0$.
Therefore,
\begin{equation*} 
\bigg\|\sum_{n=1}^{k+1}g_n\bigg\|^2
=\bigg\|\sum_{n=1}^kg_n\bigg\|^2+2\mathrm{Re}\bigg\langle\sum_{n=1}^kg_n,g_{k+1}\bigg\rangle+\|g_{k+1}\|^2
\leq\bigg\|\sum_{n=1}^kg_n\bigg\|^2+\|g_{k+1}\|^2
\leq k+1,
\end{equation*}
where the last inequality uses the inductive hypothesis.
\end{proof}

\begin{example}
As an example of how linear-time flipping reduces average coherence, consider the following matrix:
\begin{equation*}
F:= \frac{1}{\sqrt{5}}\left[ \begin{array}{cccccccccc} +&+&+&+&-&+&+&+&+&-\\+&-&+&+&+&-&-&-&+&-\\+&+&+&+&+&+&+&+&-&+\\-&-&-&+&-&+&+&-&-&-\\-&+&+&-&-&+&-&-&-&- \end{array} \right].
\end{equation*}
Here, $\smash{\nu_F\approx0.3778>0.2683\approx\frac{\mu_F}{\sqrt{M}}}$.
Even though $N<M^2+3M+3$, we run linear-time flipping to get the flipping pattern $D:=\mathrm{diag}(+-+--++-++)$.
Then $FD$ has average coherence $\smash{\nu_{FD}\approx0.1556<\frac{\mu_{F}}{\sqrt{M}}=\frac{\mu_{FD}}{\sqrt{M}}}$.
This example illustrates that the condition $N\geq M^2+3M+3$ in Theorem~\ref{thm.alg} is sufficient but not necessary.
\end{example}

\section*{Acknowledgments}
The authors thank the anonymous referees for their helpful suggestions, Matthew Fickus for his insightful comments on chirp frames, and Samuel Feng and Michael A.~Schwemmer for their help with using the computer clusters in Princeton's mathematics department.
This work was supported by the Office of Naval Research under grant N00014-08-1-1110,
by the Air Force Office of Scientific Research under grants FA9550-09-1-0551 and
FA 9550-09-1-0643, and by NSF under grant DMS-0914892.
Mixon was supported by the A.B. Krongard Fellowship.
The views expressed in this article are those of the authors and do not reflect the official policy or position of the United States Air Force, Department of Defense, or the U.S. Government.

\section*{References}

\end{document}